  \newtheorem{theorem}{Theorem}
    \newtheorem{lemma}{Lemma}    
    \newtheorem{proposition}{Proposition}
    \newcommand\vu{{\bf u}}
    \newcommand\vv{{\bf  v}}
    \newcommand\vw{{\bf  w}}
    \newcommand\vb{{\bf b}}
    \newcommand\vN{{\bf  \nabla}}
    \newcommand\ddh{--}
    \DeclareMathOperator{\Div}{div}
\begin{document}

\title{Calder\'on splitting and weak solutions for Navier\ddh Stokes equations with
initial data in   weighted $L^p$ spaces.}
\author{  Pierre Gilles Lemari\'e-Rieusset\footnote{LaMME, Univ Evry, CNRS, Universit\'e Paris-Saclay, 91025, Evry, France}\footnote{ emeritus professor, (Univ Evry)} \footnote{e-mail : pierregilles.lemarierieusset@univ-evry.fr}}
\date{}\maketitle

\begin{abstract}
We show the existence of global weak solutions of the 3D Navier\ddh Stokes equations with initial velocity
in the weighted spaces  $L^p_{\Phi_\gamma}=L^p(\mathbb{R}^3, \Phi_\gamma(x)\, dx)$, where $2<p<+\infty$, $0<\gamma<2$ and $\Phi_\gamma(x)=\frac 1{(1+\vert x\vert^2)^{\frac\gamma 2}}$, using Calder\'on splitting  $L^p_{\Phi_\gamma}\subset L^2_{\Phi_2}+L^r$ (with some $r\in (3,+\infty)$) and   energy controls in $L^2_{\Phi_2}$. \end{abstract}
 
\noindent{\bf Keywords : } Navier\ddh Stokes equations, weighted spaces, weak solutions,
energy controls, Calder\'on's splitting\\

\noindent{\bf AMS classification : }  35Q30, 76D05.

\section*{Introduction}
We consider the Navier\ddh Stokes equations on $(0,T)\times\mathbb{R}^3$
\begin{equation}\label{NSEq1} \left\{ \begin{split} \partial_t\vu=&\Delta\vu-\vu\cdot\vN\vu-\vN p
\\\Div\,\vu=&0 \\ \lim_{t\rightarrow 0} \vu(t,.)=&\vu_0\end{split}\right.\end{equation}
when the non-linearity $\vu\cdot\vN \vu$  is rewritten as $\mathrm{div}\, (\vu\otimes\vu)$ and the pressure $p$ is eliminated due do the Leray projection operator $\mathbb{P}$, rewritting $\vu\cdot\vN\vu+\vN p$ as $\mathbb{P}\Div(\vu\otimes\vu))$.
 
 We thus shall consider distribution $\vu$ such that
 $\vu\in L^2((0,T), L^2(\mathbb{R}^3,\frac {dx}{(1+\vert x\vert^2)^2} ))$, we first prove that $\mathbb{P}\Div(\vu\otimes\vu))$ is well defined as a distribution on $(0,T)\times\mathbb{R}^3$; assuming moreover that $\vu$ satisfies the  Navier\ddh Stokes equations 
 \begin{equation}\label{NSEq2} \left\{ \begin{split} \partial_t\vu=&\Delta\vu -\mathbb{P}\Div(\vu\otimes\vu))
\\\Div\,\vu=&0 \end{split}\right.\end{equation} we shall prove that $\vu$ may be defined as a continuous in time distribution $t\in [0,T]\mapsto \vu(t,.)\in \mathcal{S}'(\mathbb{R}^3)$, (more precisely, that $t\mapsto \frac 1{(1+\vert x\vert^2)^2} \vu(t,.)$ is continuous from $[0,T]$ to $ H^{-4}(\mathbb{R}^3)$), so that the initial value condition
\begin{equation}  \lim_{t\rightarrow 0} \vu(t,.)=\vu_0\end{equation} is meaningful. Those results are described in Proposition \ref{prop:proj} and Theorem \ref{theo:stokes}.

 There are many  examples of solutions such that 
 $\vu\in L^2((0,T), L^2(\mathbb{R}^3,\frac {dx}{(1+\vert x\vert^2)^2} ))$. A special subclass of such solutions are the solutions such that
 \begin{equation}\label{L2L2-uloc} \sup_{x_0\in\mathbb{R}^3} \iint_{(0,T)\times B(x_0,1)} \vert \vu (s,y)\vert^2\, ds\, dy<+\infty.\end{equation}  In this subclass, one can find
 \begin{itemize}
 \item[$\bullet$] Kato's mild solutions in Lebesgue spaces $\vu\in L^\infty((0,T), L^p)$ with $3\leq p\leq +\infty$ \cite{Kat84}
 corresponding to the Cauchy initial value problem with $\vu_0\in L^p$
 \item[$\bullet$]  more generally, Koch and Tataru's mild solutions \cite{KoT01} 
 corresponding to the Cauchy initial value problem with $\vu_0\in \mathrm{bmo}^{-1}$
 \item[$\bullet$]  Leray's weak solutions  $\vu\in L^\infty((0,T), L^2)$ \cite{Ler34}
 corresponding to the Cauchy initial value problem with $\vu_0\in L^2$
 \item[$\bullet$]  Calder\'on's weak solutions  $\vu\in L^\infty((0,T), L^2+L^\infty)$ \cite{Cal90}
 corresponding to the Cauchy initial value problem with $\vu_0\in L^p$ with $p\geq 2$
 \item[$\bullet$]  Lemari\'e-Rieusset's weak solutions  $\vu\in L^\infty((0,T), L^2_{\rm uloc})$ \cite{Lem99}
 corresponding to the Cauchy initial value problem with $\vu_0\in L^2_{\rm uloc}$
 \end{itemize}
 
 More recently, Fern\'andez-Dalgo \&  Lemari\'e-Rieusset \cite{FLR20} and Bradshaw, Kucavica \& Tsai \cite{BKT22} considered  the Cauchy initial value problem with $\vu_0\in L^2(\mathbb{R}^3, \frac{dx}{1+\vert x\vert^2})$. Their solutions belong to $L^\infty((0,T),  L^2(\frac{dx}{1+\vert x\vert^2}))$ and don't satisfy  condition
 (\ref{L2L2-uloc}).
 
 Other weak solutions which don't satisfy  condition
 (\ref{L2L2-uloc}) are the statistically homogeneous solutions of 
 Fursikov   and Vi\v{s}ik \cite{FuV88}, which are proved to belong to $L^\infty((0,T),  L^2(\frac{dx}{(1+\vert x\vert^2)^{\gamma/2}}))$ for every $\gamma>3$.
 
 Our main goal in this paper is to prove the following result:
 
 \begin{theorem}\label{theo:weightLp}  Let $2< p<+\infty$ and $\gamma\in (0,2)$.  Let $\vu_0\in L^p(\mathbb{R}^3, \frac{dx}{1+\vert x\vert^2})$ with $\mathrm{div}\, \vu_0=0$. Then the Navier\ddh Stokes equations
  \begin{equation}\label{NSEq3} \left\{ \begin{split} \partial_t\vu=&\Delta\vu -\mathbb{P}\Div(\vu\otimes\vu))
\\\Div\,\vu=&0  \\ \lim_{t\rightarrow 0} \vu(t,.)=&\vu_0\end{split}\right.\end{equation} have a solution on $(0,+\infty)\times\mathbb{R}^3$ with, for every $0<T<+\infty$, 
$\vu\in L^\infty((0,T), L^2(\frac{dx}{1+\vert x\vert^2}))+ L^\infty((0,T), L^r)$ for some $r\in (3,+\infty)$.
 \end{theorem}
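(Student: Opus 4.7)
The plan is the one advertised in the abstract: write the data as $\vu_0 = \vv_0 + \vw_0$ with $\vv_0 \in L^2_{\Phi_2}$ and $\vw_0 \in L^r$ for some $r>3$, then seek $\vu = \vv + \vw$ where $\vw$ propagates the rough high-intensity piece in $L^r$ and $\vv$ is obtained by weighted energy estimates in $L^2_{\Phi_2}$.

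For the Calder\'on-type splitting I would take a decreasing threshold $M(x) := (1+|x|^2)^{(\gamma-2)/(2(p-2))}$, which tends to $0$ at infinity since $\gamma<2<p$, and set $\vv_0 := \vu_0\,\mathbf{1}_{|\vu_0|>M}$ and $\vw_0 := \vu_0\,\mathbf{1}_{|\vu_0|\leq M}$. On $\{|\vu_0|>M\}$ one has $|\vu_0|^2 \leq M^{2-p}|\vu_0|^p$, and the choice of $M$ makes $M^{2-p}\Phi_2 \leq C\Phi_\gamma$, so $\vv_0\in L^2_{\Phi_2}$; on $\{|\vu_0|\leq M\}$ the pointwise bound $|\vw_0|\leq M$ combined with direct integration shows $M \in L^r(\mathbb{R}^3)$ (hence $\vw_0 \in L^r$) as soon as $r > 3(p-2)/(2-\gamma)$, a range always compatible with $r>3$. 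The Leray projector $\mathbb{P}$ is bounded on $L^r$ (Riesz transforms) and on $L^2_{\Phi_2}$ (because $\Phi_2$ is a Muckenhoupt $A_2$ weight), so $\vu_0 = \mathbb{P}\vv_0 + \mathbb{P}\vw_0$ provides the sought divergence-free splitting with the same integrability.

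Now take $\vw(t,\cdot) := e^{t\Delta}\mathbb{P}\vw_0$ (or a Kato mild solution where it exists): contractivity of the heat semigroup on $L^r$ gives $\vw \in L^\infty((0,T),L^r)$ for every finite $T$, and $\vw$ is smooth for $t>0$. Setting $\vv := \vu-\vw$, the problem reduces to the perturbed Navier--Stokes system
\[
\partial_t \vv = \Delta \vv - \mathbb{P}\,\Div\bigl(\vv\otimes\vv+\vv\otimes\vw+\vw\otimes\vv\bigr) - \mathbb{P}\,\Div(\vw\otimes\vw),\qquad \vv|_{t=0}=\mathbb{P}\vv_0.
\]
I would construct $\vv$ via mollification (or a Friedrichs/Galerkin scheme), test the approximate equation against $\vv\,\Phi_2$, and close the resulting weighted energy identity by Gronwall to obtain bounds on $\|\vv\|_{L^\infty_t L^2_{\Phi_2}}$ and $\|\vN\vv\|_{L^2_t L^2_{\Phi_2}}$ uniform in the regularization; Aubin--Lions compactness in $L^2_{\mathrm{loc}}$ then permits passage to the limit in $\vv\otimes\vv$ and in the cross terms, and $\vu := \vv+\vw$ is the desired distributional solution on every $(0,T)$.

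The main obstacle is closing the weighted energy inequality. Because $\Phi_2$ is not translation-invariant, the classical cancellation $\int(\vv\cdot\vN)\vv\cdot\vv\,dx=0$ is lost and leaves a leftover $\tfrac{1}{2}\int|\vv|^2\,\vv\cdot\vN\Phi_2\,dx$; the cross term $\int(\vv\otimes\vw):\vN(\vv\,\Phi_2)\,dx$ and the forcing $\int \vv\cdot\mathbb{P}\,\Div(\vw\otimes\vw)\,\Phi_2\,dx$ are similarly non-vanishing. The pointwise bounds $|\vN\Phi_2|\lesssim \Phi_2^{3/2}$ and $|\Delta\Phi_2|\lesssim \Phi_2^{2}$, combined with the $L^r$-control on $\vw$ and suitable weighted Hardy/Sobolev inequalities, are precisely what allow these terms to be absorbed into $\|\vN\vv\|_{L^2_{\Phi_2}}^2$ plus a contribution linear in $\|\vv\|_{L^2_{\Phi_2}}^2$; this is the technical heart of the argument, transposing to the $L^2_{\Phi_2}$ setting the energy framework developed by Fern\'andez-Dalgo and Lemari\'e-Rieusset.
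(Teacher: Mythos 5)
Your splitting of the data is sound (it is essentially Calder\'on's original truncation device, in place of the paper's real-interpolation splitting of $L^{r_0}$ between $L^2$ and $L^r$; the computation $M^{2-p}\Phi_2=\Phi_\gamma$ is correct), and the overall architecture --- mollified approximation, weighted energy estimate for the $L^2(\Phi_2\,dx)$ component, Aubin--Lions compactness --- is the one the paper follows. But there is a genuine gap at the heart of the argument: the claim that the leftover terms in the weighted energy identity can be absorbed into the dissipation ``plus a contribution linear in $\|\vv\|_{L^2_{\Phi_2}}^2$'' is false for the self-interaction term. The loss of the cancellation $\int(\vv\cdot\vN)\vv\cdot\vv\,dx=0$ leaves $\tfrac12\int|\vv|^2\,\vv\cdot\vN\Phi_2\,dx\lesssim\|\Phi\vv\|_3^3\lesssim\|\Phi\vv\|_2^{3/2}\|\vN(\Phi\vv)\|_2^{3/2}$, and Young's inequality turns this into $\delta\|\vN(\Phi\vv)\|_2^2+C_\delta\|\vv\|_{L^2(\Phi_2\,dx)}^6$; the resulting differential inequality $y'\leq C(1+y+y^3)$ closes only up to a time of order $(1+\|\vv_0\|_{L^2(\Phi_2\,dx)}^4)^{-1}$. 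As written, your argument therefore proves local existence, not the global statement of the theorem.

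The paper needs two further devices, both absent from your sketch, to go global. First, the $L^r$ component of the data must be taken with \emph{arbitrarily small} norm $\|\vb_{0,\eta}\|_r<\eta$ (Lemma \ref{prop:split}), because the nonlinear evolution of that piece only lives for a time $T_{[\eta]}\sim \eta^{-2r/(r-3)}$; your fixed truncation gives no such smallness (though truncating at level $\epsilon M$ instead of $M$ would, at the price of inflating the $L^2(\Phi_2\,dx)$ piece, which is harmless). Second, and more importantly, the short lifespan of the $L^2(\Phi_2\,dx)$ piece is enlarged by rescaling: setting $\vu_\lambda=\lambda^{-1}\vu(t/\lambda^2,x/\lambda)$ and observing by dominated convergence that $\lambda^2(1+\|\vv_{0,\lambda}\|_{L^2(\Phi_2\,dx)}^4)\rightarrow 0$ as $\lambda\rightarrow 0$, the existence time after undoing the scaling exceeds any prescribed $T$ (Theorem \ref{theo:mollif4}). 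Without this step, or a substitute for it, the superlinear Gronwall term cannot be beaten. A lesser remark: you propagate the $L^r$ piece by the linear heat flow rather than by the (mollified) nonlinear equation; this changes the forcing in the $\vv$-equation but is not by itself the obstruction --- the missing smallness and rescaling are.
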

    The limit case $L^\infty((0,T), L^2(\frac{dx}{1+\vert x\vert^2}))$ corresponds to the results of Fern\'andez-Dalgo \&  Lemari\'e-Rieusset \cite{FLR20} and Bradshaw, Kucavica \& Tsai \cite{BKT22}. The limit case $L^\infty((0,T),L^r)$ corresponds to Kato's mild solutions in $L^r$  \cite{Kat84}  (existence of mild solutions  is known only for a finite time $T\approx \frac 1{\|\vu_0\|_r^{\frac{2r}{r-3}}}$).  In order to deal with the case $2<p<+\infty$, we shall use Calder\'on's method  \cite{Cal90} and  split the initial value in a sum of two vector fields corresponding to the limit cases which we know how to deal with. 
    
\section{Weighted Lebesgue and Sobolev spaces}\label{sec:weighted}
Throughout the paper, we shall deal with weights $\Phi_\gamma(x)=\Phi(x)^\gamma$, $\gamma\in\mathbb{R}$, where $$\Phi(x)=\frac 1{(1+\vert x\vert^2)^{1/2}}\approx \frac 1{1+\vert x\vert}.$$ We will work in weighted spaces:
\begin{itemize}
\item[$\bullet$] weighted Lebesgue spaces $L^p(\Phi_\gamma\, dx)$ ($1\leq p\leq +\infty$) with
$$ \|u\|_{L^p(\Phi_\gamma\, dx)}=\|\Phi^{\frac\gamma p} u\|_p.$$
\item[$\bullet$] weighted Sobolev spaces $H^s(\Phi_\gamma\, dx)$ ($s\in\mathbb{R}$) with
$$ \|u\|_{H^s(\Phi_\gamma\, dx)}=\|\Phi^{\frac\gamma 2} u\|_{H^s}.$$
\end{itemize} $\mathcal{D}(\mathbb{R}^3)$ is dense in $L^p(\Phi_\gamma\, dx)$ for $1\leq p<+\infty$ and in $H^s(\Phi_\gamma\, dx)$. We have, for $u,v\in \mathcal{D}$ (and $p>1$)
\[ \vert \int uv\, dx\vert \leq \|u\|_{L^p(\Phi_\gamma dx)} \|v\|_{L^{\frac p{p-1}} (\Phi _{-\frac \gamma{p-1}}\, dx)} \] and \[  \vert \int uv\, dx\vert \leq \|u\|_{H^s(\Phi_\gamma dx)} \|v\|_{H^{-s} (\Phi _{- \gamma}\, dx)}.  \]

For $\gamma\leq \delta$ and $\sigma\leq s$, we have the following obvious  continuous embeddings:
\[ L^p(\Phi_\gamma\, dx)\subset L^p(\Phi_\delta \, dx)\text{ and } H^s(\Phi_\gamma\, dx)\subset H^\sigma(\Phi_\gamma \, dx).\]
Using the H\"older inequality, we find as well the following embedding
\[ L^p(\Phi_\gamma\, dx)\subset L^q(\Phi_\delta\, dx)\text{ for } q<p \text{ and } \frac\delta q> \frac \gamma p+ \frac{ 3(p-q)}{pq}.\]
Using the Sobolev inequalities, we find that, for $0\leq s<\frac 3 2$,  $\frac 1 q= \frac 1 2-\frac s 3$ and $\frac 1 r=\frac 1 2+\frac s 3$,  we have

\[ H^s(\Phi_\gamma\, dx)\subset L^q(\Phi_{\frac q 2 \gamma}\, dx) \text{ and }  L^r(\Phi_{\frac r 2 \gamma}\, dx)  \subset H^{-s}(\Phi_\gamma\, dx) .\] Similarly, for $s>\frac 3 2$, we have
\[ L^1(\Phi_{\frac  \gamma 2}\, dx)  \subset H^{-s}(\Phi_\gamma\, dx) .\] 

We state two further estimates:

\begin{lemma}\label{le:diff} Let $u\in H^s(\Phi_\gamma\,  dx)$. Then
\[ \|\vN u\|_{H^{s-1}(\Phi_\gamma \, dx)} \leq C \|  u\|_{H^{s}(\Phi_\gamma \, dx)} . \] If $s\geq 0$,
\[  \|  u\|_{H^{s}(\Phi_\gamma \, dx)}\approx \|u\|_{L^2(\Phi_\gamma\, dx)}+ \|\vN u\|_{H^{s-1}(\Phi_\gamma \, dx)} .\]
\end{lemma}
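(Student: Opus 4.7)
My plan is to translate everything to unweighted Sobolev norms via the identity $\|u\|_{H^s(\Phi_\gamma\, dx)}=\|\Phi^{\gamma/2}u\|_{H^s}$, and then to control the error coming from the commutator between $\vN$ and multiplication by the weight $\Phi^{\gamma/2}$ through a pointwise multiplier bound.

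The key preliminary computation I would do is
$$m(x):=\frac{\vN\Phi^{\gamma/2}(x)}{\Phi^{\gamma/2}(x)}=-\frac{\gamma}{2}\cdot\frac{x}{1+|x|^2},$$
which is smooth on $\mathbb{R}^3$ and has every derivative bounded. Therefore pointwise multiplication by $m$ is bounded on each unweighted $H^\sigma(\mathbb{R}^3)$ (directly from Leibniz for $\sigma\geq 0$, and by duality for $\sigma<0$). Applying Leibniz to $\Phi^{\gamma/2}u$ gives
$$\Phi^{\gamma/2}\vN u=\vN(\Phi^{\gamma/2}u)-m\cdot(\Phi^{\gamma/2}u),$$
and taking the $H^{s-1}$ norm, the first term is controlled by $\|\Phi^{\gamma/2}u\|_{H^s}$ since $\vN:H^s\to H^{s-1}$ is continuous, and the second by $C\|\Phi^{\gamma/2}u\|_{H^{s-1}}\leq C\|\Phi^{\gamma/2}u\|_{H^s}$ thanks to the multiplier property. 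This establishes the first inequality.

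For the equivalence when $s\geq 0$, the direction $\|u\|_{L^2(\Phi_\gamma dx)}+\|\vN u\|_{H^{s-1}(\Phi_\gamma dx)}\leq C\|u\|_{H^s(\Phi_\gamma dx)}$ is immediate from the first inequality combined with the embedding $\|\Phi^{\gamma/2}u\|_{L^2}\leq \|\Phi^{\gamma/2}u\|_{H^s}$. For the reverse, I would apply the classical unweighted equivalence $\|v\|_{H^s}\approx \|v\|_{L^2}+\|\vN v\|_{H^{s-1}}$ to $v=\Phi^{\gamma/2}u$ and split $\vN(\Phi^{\gamma/2}u)=\Phi^{\gamma/2}\vN u+m\cdot(\Phi^{\gamma/2}u)$, obtaining
$$\|u\|_{H^s(\Phi_\gamma dx)}\leq C\bigl(\|u\|_{L^2(\Phi_\gamma dx)}+\|\vN u\|_{H^{s-1}(\Phi_\gamma dx)}+\|\Phi^{\gamma/2}u\|_{H^{s-1}}\bigr).$$
The only subtle point, which I expect to be the main technical obstacle, is the last term: when $0\leq s\leq 1$ it is directly dominated by $\|\Phi^{\gamma/2}u\|_{L^2}$, but when $s>1$ it must be absorbed into the left-hand side by means of the standard interpolation inequality $\|v\|_{H^{s-1}}\leq \epsilon\|v\|_{H^s}+C_\epsilon\|v\|_{L^2}$.
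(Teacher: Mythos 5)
Your proof is correct and follows essentially the same route as the paper: rewrite the weighted norm as $\|\Phi^{\gamma/2}u\|_{H^s}$, control the commutator with $\vN$ via the multiplier $\vN\Phi^{\gamma/2}/\Phi^{\gamma/2}$ (bounded with all derivatives), and for $s>1$ absorb the $H^{s-1}$ error term through the interpolation inequality $\|v\|_{H^{s-1}}\leq\epsilon\|v\|_{H^s}+C_\epsilon\|v\|_2$. The only cosmetic difference is that the paper writes the multiplier as $\frac{\gamma}{2}\frac{\vN\Phi}{\Phi}$ rather than computing $m$ explicitly, which changes nothing.
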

\begin{proof} We have
$$ \|\vN u\|_{H^{s-1}(\Phi_\gamma\, dx)}=\|\Phi^{\frac\gamma 2} \vN u\|_{H^{s-1}}\leq \| \vN (\Phi^{\frac \gamma 2} u)\|_{H^{s-1}}+ \frac{\vert\gamma \vert}2 \| \Phi^{\frac\gamma 2} u \frac{\vN \Phi}{\Phi}\|_{H^{s-1}}.$$
Since $ \frac{\vN \Phi}{\Phi}$ is bounded with all its derivatives, we find that 
\[ \| \Phi^{\frac\gamma 2} u \frac{\vN \Phi}{\Phi}\|_{H^{s-1}}\leq C \| \Phi^{\frac\gamma 2} u  \|_{H^{s-1}}\leq C \| \Phi^{\frac\gamma 2} u\|_{H^{s}}\] and we easily conclude since $ \| \vN(\Phi^{\frac\gamma 2} u ) \|_{H^{s-1}}\leq  \| \Phi^{\frac\gamma 2} u\|_{H^{s}}$.

If $s\geq 0$, we have \[ \|u\|_{H^s(\Phi_\gamma\, dx)}=\|\Phi^{\frac\gamma 2} u\|_{H^s}\approx  \|\Phi^{\frac\gamma 2} u\|_{2}+ \|\vN (\Phi^{\frac\gamma 2}u)\|_{H^{s-1}}.\]
We have
\[  \|\vN (\Phi^{\frac\gamma 2}u)\|_{H^{s-1}}\leq  \|\vN u\|_{H^{s-1}(\Phi_\gamma\, dx)} + C \|\Phi^{\frac\gamma 2} u\|_{H^{s-1}} \] with
\[  \|\Phi^{\frac\gamma 2} u\|_{H^{s-1}}\leq  \|\Phi^{\frac\gamma 2} u\|_{2} \] if $0\leq s\leq 1$ and 
\[  \|\Phi^{\frac\gamma 2} u\|_{H^{s-1}}\leq  \|\Phi^{\frac\gamma 2} u\|_{2}^{\frac 1 s} \|\Phi^{\frac\gamma 2} u\|_{H^{s}}^{1-\frac 1 s} \leq \frac 1{s\epsilon^s} \|\Phi^{\frac\gamma 2} u\|_{2}+ \epsilon^{\frac s{s-1}} (1-\frac 1 s) \|\Phi^{\frac\gamma 2} u\|_{H^{s}} \] if $s>1$ and $\epsilon>0$.
\end{proof}

\begin{lemma}\label{le:convol}
For $1\leq p\leq +\infty$ and $0<t\leq 1$ we have
\[ \| \int \frac 1{(\sqrt t+\vert x-y\vert)^4} f(y)\, dy\|_{L^p(\Phi_4\, dx)}\leq C t^{-\frac 1 2} \|f\|_{L^p(\Phi_4\, dx)}.\]\end{lemma}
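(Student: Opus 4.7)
My plan is to realize $L^p(\Phi_4\,dx)$ for $p\in[1,+\infty]$ as the $L^p$ space of the fixed positive measure $d\mu=\Phi_4(x)\,dx$, and apply the Riesz--Thorin interpolation theorem to the convolution operator $Tf(x)=\int K_t(x-y)f(y)\,dy$ with kernel $K_t(z)=(\sqrt t+|z|)^{-4}$. It will then suffice to establish the estimate at the two endpoints $p=1$ and $p=\infty$.

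The $p=\infty$ endpoint is immediate: since $\|\cdot\|_{L^\infty(\Phi_4\,dx)}=\|\cdot\|_\infty$, Young's inequality reduces the claim to the computation $\|K_t\|_1=\frac{4\pi}{3}t^{-1/2}$, which follows from polar coordinates and the substitution $u=\sqrt t+r$. By Fubini, the $p=1$ endpoint reduces to the pointwise kernel estimate
\[
F(y):=\int \frac{\Phi_4(x)}{(\sqrt t+|x-y|)^4}\,dx\leq Ct^{-1/2}\Phi_4(y).
\]
The main obstacle is that the naive weight transfer $\Phi_4(x)\leq 4\Phi_4(y)(1+|x-y|^2)^2$ leads to the divergent integral $\int (1+|z|^2)^2(\sqrt t+|z|)^{-4}\,dz$, so a more delicate region-splitting is required.

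I will split the $x$-integral at $|x-y|=1$. On $\{|x-y|\leq 1\}$ one has $\int K_t\leq Ct^{-1/2}$; if $|y|\geq 2$ then $\Phi_4(x)\leq C\Phi_4(y)$ on this unit ball, and if $|y|\leq 2$ the desired bound is automatic since $\Phi_4(y)$ is bounded below by a positive constant. On $\{|x-y|>1\}$, the assumption $t\leq 1$ yields $(\sqrt t+|x-y|)^{-4}\leq |x-y|^{-4}$, and a further dichotomy $|x|\leq |y|/2$ versus $|x|>|y|/2$ controls $\int_{|x-y|>1}\Phi_4(x)|x-y|^{-4}\,dx$ by $C\Phi_4(y)$: on the first piece $|x-y|\geq |y|/2$ together with the finiteness of $\int\Phi_4$ produces a factor $|y|^{-4}$, while on the second piece $\Phi_4(x)\leq C\Phi_4(y)$ combines with the finiteness of $\int_{|z|>1}|z|^{-4}\,dz$. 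Since $t^{-1/2}\geq 1$ for $t\leq 1$, this bounded contribution is absorbed and we obtain $F(y)\leq Ct^{-1/2}\Phi_4(y)$; the Riesz--Thorin theorem then delivers the full range $1\leq p\leq +\infty$.
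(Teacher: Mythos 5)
Your proof is correct, and its skeleton --- the two endpoints $p=1$ and $p=+\infty$ plus interpolation, with the $p=1$ case reduced by Fubini to the pointwise kernel bound $\int \Phi_4(x)(\sqrt t+\vert x-y\vert)^{-4}\, dx\leq C t^{-1/2}\Phi_4(y)$ --- is exactly the paper's. Where you genuinely diverge is in how that kernel estimate is established. You correctly note that the crude transfer $\Phi_4(x)\leq 4\Phi_4(y)(1+\vert x-y\vert^2)^2$ gives a divergent integral, and you repair this with a two-level region splitting (first at $\vert x-y\vert=1$, then at $\vert x\vert=\vert y\vert/2$). The paper instead transfers the weight the other way, via $(1+\vert y\vert)^4\leq C\bigl((1+\vert x\vert)^4+\vert x-y\vert^4\bigr)$: after dividing by $(1+\vert x\vert)^4(\sqrt t+\vert x-y\vert)^4$, the factor $\vert x-y\vert^4$ cancels against the kernel denominator, leaving $\int\frac{dx}{(\sqrt t+\vert x-y\vert)^4}+\int\frac{dx}{(1+\vert x\vert)^4}\leq Ct^{-1/2}$ in one line. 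Both arguments are valid; the paper's is shorter, yours makes the underlying geometry explicit. One small point to tidy: in your sub-case $\vert x-y\vert>1$, $\vert x\vert\leq\vert y\vert/2$, the factor $\vert y\vert^{-4}$ is comparable to $\Phi_4(y)$ only for $\vert y\vert$ bounded away from $0$; for small $\vert y\vert$ you should invoke the same observation you already used in the first region (namely $\Phi_4(y)$ is bounded below for, say, $\vert y\vert\leq 2$, where $\vert x-y\vert^{-4}\leq 1$ already gives a bounded contribution).
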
 
\begin{proof} This is obvious for  $p=+\infty$  since $\int \frac 1{(\sqrt t+\vert x-y\vert)^4}\,  dy= C  t^{-\frac 1 2}$.

 For $ p=1$,  we have to prove that 
 \[ \| \frac 1 {(1+\vert x\vert)^{ 4 } }\int \frac 1{(\sqrt t+\vert x-y\vert)^4} (1+\vert y\vert)^{ 4 } g(y)\, dy\|_1\leq C \|g\|_1.\] As $(1+\vert y\vert)^{ 4 } \leq 16 ((1+\vert x\vert)^{ 4 }+ \vert x-y\vert^{ 4 })$, we have \[ \int \frac{(1+\vert y\vert^4)} {(1+\vert x\vert)^4 (\sqrt t+\vert x-y\vert)^4}\, dx  \leq 16 ( \int \frac {dx} {(1+\vert x\vert)^4} + \int \frac {dx} {(\sqrt t+\vert x-y\vert)^4} )\leq C t^{-\frac 1 2}\] and we conclude by Fubini.
 
 For $1<p<+\infty$, we conclude by interpolation.
\end{proof}
Remark: we have as well 
\[ \| \int \frac 1{(\sqrt t+\vert x-y\vert)^4} f(y)\, dy\|_{L^p(\Phi_\gamma\, dx)}\leq C t^{-\frac 1 2} \|f\|_{L^p(\Phi_\gamma\, dx)}\] for $0\leq \gamma\leq 4$, $1\leq p\leq +\infty$ and $0<t\leq 1$ (by interpolation between $L^p(dx)$ and $L^p(\Phi_4\, dx)$).\\

Our next result deals with the Leray projection operator acting on the divergence of a tensor in $L^1(\Phi_4\, dx)$:

\begin{proposition}[Leray projection]\label{prop:proj} $ $ \\  Let $\mathbb{F} \in  L^1(\mathbb{R}^3, \Phi_4 \, {dx} )$. Then there exists a unique pair $(\vb_1, \vb_2)$ such that $$\Div\, (\mathbb{F})=\vb_1+\vb_2$$ with
\begin{itemize}
\item[$\bullet$] $  \vb_1\in H^\sigma(\Phi_\gamma dx)$ and $\Div\, \vb_1=0$
\item[$\bullet$]  $  \vb_2\in H^\sigma(\Phi_\gamma dx)$ and $\vN\wedge  \vb_2=0$
\item[$\bullet$] $\lim_{\tau\rightarrow +\infty} e^{\tau\Delta}\vb_2=0$ in $\mathcal{S}'(\mathbb{R}^3)$.
\end{itemize} for $\gamma>7$ and $\sigma<-\frac 5 2$.

$\vb_1$ is called the Leray projection of  $\mathrm{div}\, (\mathbb{F})$ and we write $$\vb_1=\mathbb{P}\Div(\mathbb{F}).$$ \end{proposition}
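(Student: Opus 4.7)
The natural ansatz is to set $\vb_2 := \vN q$ where $q$ solves $\Delta q = \Div\Div\mathbb{F}$, and $\vb_1 := \Div\mathbb{F} - \vb_2$; this forces $\Div\vb_1 = 0$ and $\vN\wedge\vb_2 = 0$ automatically. The heat-decay condition $e^{\tau\Delta}\vb_2\to 0$ in $\mathcal{S}'$ picks out the canonical choice of the inverse Laplacian, killing the ambiguity up to a harmonic polynomial.

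I would construct $q$ via the heat-semigroup representation of $-\Delta^{-1}$,
$$ q := -\int_0^\infty e^{s\Delta}(\Div\Div\mathbb{F})\,ds = -\sum_{i,j}\int_0^\infty \partial_i\partial_j\,e^{s\Delta}F_{ij}\,ds, $$
so that $e^{\tau\Delta}q$ equals the tail $-\int_\tau^\infty\ldots\,ds$, which vanishes as $\tau\to\infty$, and hence so does $e^{\tau\Delta}\vb_2 = \vN e^{\tau\Delta}q$. The main task is to show this integral converges in $H^\sigma(\Phi_\gamma)$ for $\sigma<-5/2$ and $\gamma>7$. Splitting $\int_0^\infty = \int_0^1 + \int_1^\infty$, the tail piece is easy: the heat-kernel bound $|\partial^k G_s(x)| \leq C_k (\sqrt{s}+|x|)^{-(3+k)}$ provides integrable polynomial decay in $s$. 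The delicate range is $s\in(0,1]$, where a direct estimate of $\|\partial_k\partial_i\partial_j G_s * F_{ij}\|_{L^1(\Phi_4)}$ via a variant of Lemma~\ref{le:convol} applied to the kernel $(\sqrt{s}+|\cdot|)^{-6}$ would produce $Cs^{-3/2}$, which is not integrable near $s=0$. The cure is to exploit the slack provided by $\sigma<-5/2$ and $\gamma>7$: use Lemma~\ref{le:diff} to absorb one of the three derivatives into the Sobolev exponent (replacing $\partial^3$ by $\partial^2$ at the cost of one degree in the $H^\sigma$ scale) and use the embedding $L^1(\Phi_{\gamma/2})\subset H^{-3/2-\epsilon}(\Phi_\gamma)$ recalled in Section~\ref{sec:weighted}. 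The remaining convolution then only involves kernels of order $(\sqrt{s}+|\cdot|)^{-5}$ or $(\sqrt{s}+|\cdot|)^{-4}$, to which Lemma~\ref{le:convol} (and its remark) applies with an integrable $s$-factor; the corresponding estimate for $\vb_1$ follows by subtraction, since $\Div\mathbb{F}$ is itself in $H^\sigma(\Phi_\gamma)$ via the same Sobolev embedding.

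Uniqueness is then routine. If $(\vb_1',\vb_2')$ is another admissible pair, the difference $\vw := \vb_2 - \vb_2' = \vb_1' - \vb_1$ is simultaneously divergence-free and curl-free, hence harmonic as a tempered distribution; a harmonic tempered distribution is fixed by $e^{\tau\Delta}$, so the assumed decay of both $e^{\tau\Delta}\vb_2$ and $e^{\tau\Delta}\vb_2'$ forces $\vw = 0$. The principal obstacle throughout is the quantitative short-time balancing in the $s$-integral: the exponents $\gamma>7$ and $\sigma<-5/2$ in the statement are precisely the budget needed to redistribute three derivatives among the heat kernel, the Sobolev index, and the weight exponent, so as to turn a divergent $s^{-3/2}$ behaviour into an integrable one near $s=0^+$.
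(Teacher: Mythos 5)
Your ansatz ($\vb_2=\vN q$ with $q=-\int_0^\infty e^{s\Delta}\Div\Div\mathbb{F}\,ds$) and your uniqueness argument are exactly the paper's. The gap is in the convergence of the small-time part of the integral. After absorbing one derivative into the Sobolev index via Lemma \ref{le:diff} and the embedding $L^1(\Phi_{\gamma/2}\,dx)\subset H^{-\frac32-\epsilon}(\Phi_\gamma\,dx)$ (which is what the constraint $\sigma<-\frac52$, i.e.\ $\sigma+1<-\frac32$, pays for), you are left with the second-order kernel $\vert\partial_i\partial_j W_s\vert\lesssim(\sqrt s+\vert x\vert)^{-5}$, and the weighted convolution estimate for a kernel of that order produces a factor $s^{-1}$, not $s^{-1/2}$: indeed $\int_{\mathbb{R}^3}(\sqrt s+\vert z\vert)^{-5}\,dz=Cs^{-1}$, and $\int_0^1 s^{-1}\,ds=+\infty$. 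So your assertion that the remaining kernels of order $(\sqrt s+\vert\cdot\vert)^{-5}$ come with an integrable $s$-factor is false; only the order $-4$ kernel does. If you instead absorb two derivatives so as to reach $(\sqrt s+\vert\cdot\vert)^{-4}$, the embedding forces $\sigma+2<-\frac32$, i.e.\ $\sigma<-\frac72$: this closes the argument but proves a strictly weaker statement than the one claimed (it would still suffice for the paper's later applications, which use $\sigma=-4$ and $\gamma=8$, but it is not the proposition as stated).

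The underlying obstruction is that near the diagonal the operator $\int_0^\infty e^{s\Delta}\vN\Div\Div\,ds$ has to be treated as a single Fourier multiplier of order one (symbol $i\xi_k\xi_i\xi_j/\vert\xi\vert^2$), whose boundedness from $H^{\sigma+1}$ to $H^\sigma$ encodes a singular-integral cancellation that is destroyed once you put absolute values on $\partial^3W_s$ at each fixed $s$. The paper therefore splits in space rather than in time: with a partition of unity $\sum_k\theta(\cdot-k)=1$, the pairs of cells with $\vert j-k\vert\geq 8$ are handled by the pointwise bound $\int_0^\infty\vert(\vN\otimes\vN)\vN W_s(x-y)\vert\,ds=C\vert x-y\vert^{-4}$ (harmless there because $\vert x-y\vert\gtrsim1$) together with Lemma \ref{le:convol}, while the pairs with $\vert j-k\vert<8$ use the unweighted chain $\|\mathbb{P}\Div(\theta(\cdot-j)\mathbb{F})\|_{H^\sigma}\leq C\|\theta(\cdot-j)\mathbb{F}\|_{H^{\sigma+1}}\leq C\|\theta(\cdot-j)\mathbb{F}\|_1$, so that only one derivative is paid for by the Sobolev index and the weight enters only through the summability of $(1+\vert j\vert)^4(1+\vert k\vert)^{-\gamma}$, which is where $\gamma>7$ comes from. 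To repair your proof in the stated range $\sigma<-\frac52$ you need some version of this near/far decomposition (or another device preserving the multiplier cancellation at small $s$ near the diagonal); time-splitting combined with pointwise kernel bounds cannot get there.
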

\begin{proof} Uniqueness is obvious: if $\Div  \mathbb{F}=\vb_1+\vb_2=\vb'_1+\vb'_2 $ and if $\vb=\vb_2-\vb'_2$, then
\[\Delta \vb =\vN\wedge(\vN\wedge(\vb_2-\vb'_2))-\vN(\Div(\vb_1-\vb_1'))=0 \] while
\[ \vb=\lim_{\tau\rightarrow +\infty} -\int_0^\tau e^{s\Delta}\Delta\vb\, ds=0.\]

We now construct $\vb_2$. We want to have \[\Delta \vb_2=\vN(\Div\vb_2)=\vN(\Div( \Div \mathbb{F}))=\vN(\sum_{i=1}^3 \sum_{j=1}^3\partial_i\partial_j F_{i,j})\] and
\[\vb_2=-\int_0^{+\infty} e^{s\Delta}\vN(\Div( \Div \mathbb{F}))\, ds.\] Let $\theta\in\mathcal{D}$ such that $0\leq\theta\leq 1$, $\theta$ is supported in the ball $B(0,2)$ and $\sum_{k\in\mathbb{Z}^3} \theta(x-k)=1$.  For $\sigma\in\mathbb{R}$ and $\gamma\in\mathbb{R}$, we write
\[ \|\vb_2\|_{H^\sigma(\Phi_\gamma\, dx)} \leq \sum_{k\in\mathbb{Z}^3} \sum_{j\in\mathbb{Z}^3} \| \frac{1}{(1+\vert x\vert^2)^{\frac\gamma 2}} \theta(x-k) \int_0^{+\infty} e^{s\Delta} \vN \Div(\Div(\theta(x-j)\mathbb{F}))\, ds\|_{H^\sigma}.\]
We take $\gamma>7$ and $\sigma<-\frac 5 2$. When $\vert j-k\vert \geq 8$, we write
\begin{equation*}\begin{split} \| \frac{1}{(1+\vert x\vert^2)^{\frac\gamma 2}} \theta(x-k)& \int_0^{+\infty} e^{s\Delta} \vN \Div(\Div(\theta(\cdot-j)\mathbb{F}))\, ds\|_{H^\sigma}\\ \leq  C \|  \frac{1}{(1+\vert x\vert^2)^{\frac\gamma 2}} &\theta(x-k) \int_0^{+\infty} e^{s\Delta} \vN \Div(\Div(\theta(x-j)\mathbb{F}))\, ds\|_1
\\ \leq   C' \|\frac 1{(1+\vert x\vert^2)^2} \theta(x-k)  &\int(\int_0^{+\infty} \vert (\vN\otimes\vN)\vN(W_s(x-y))\vert\, ds) \theta(y-j) \vert \mathbb{F}(y)\vert \, dy\\ =  C'' \|  \frac{1}{(1+\vert x\vert^2)^{ 2}} &\theta(x-k) \int \frac 1{\vert x-y\vert^4} \theta(y-j)\vert \mathbb{F}(y)\vert \, ds\|_1 \\ \leq   C''' \|  \frac{1}{(1+\vert x\vert^2)^{ 2}} &\theta(x-k) \int \frac 1{(1+\vert x-y\vert^2)^2}  \theta(y-j) \vert \mathbb{F}(y)\vert \, ds\|_1
\end{split}\end{equation*} with
\begin{equation*}\begin{split} \sum_{k\in\mathbb{Z}^3}\sum_{j\in\mathbb{Z}^3}  \|  \frac{1}{(1+\vert x\vert^2)^{ 2}}& \theta(x-k) \int \frac 1{(1+\vert x-y\vert^2)^2}  \theta(y-j)\vert \mathbb{F}(y)\vert \, ds\|_1
\\ &=  \|  \frac{1}{(1+\vert x\vert^2)^{ 2}}  \int \frac 1{(1+\vert x-y\vert^2)^2} \vert \mathbb{F}(y)\vert \, ds\|_1
\\ \leq & C' \|\mathbb{F}\|_{L^1(\Phi_4\,  dx)}
\end{split}\end{equation*}  (by Lemma \ref{le:convol}).

For $\vert j-k\vert <8$, we remark that $\frac {(1+\vert k\vert)^\gamma}  {(1+\vert x\vert^2)^{ \frac \gamma 2}} \theta(x-k) $ is smooth and bounded (with all its derivatives) independently from $k$, so that \begin{equation*} \begin{split}. \| \frac{1}{(1+\vert x\vert^2)^{\frac\gamma 2}} \theta(x-k) & \int_0^{+\infty} e^{s\Delta} \vN \Div(\Div(\theta(\cdot-j)\mathbb{F}))\, ds\|_{H^\sigma}\\ \leq& C \frac 1{(1+\vert k\vert)^\gamma} \|\mathbb{P}\Div (\theta(\cdot -j)\mathbb{F})\|_{H^\sigma}.\\ \leq &  C'  \frac 1{(1+\vert k\vert)^\gamma}  \|\theta(\cdot -j)\mathbb{F}\|_{H^{\sigma+1}}\\ \leq & C''  \frac 1{(1+\vert k\vert)^\gamma}\|\theta(\cdot -j)\mathbb{F}\|_{1}\\ \leq & C'''    \frac {(1+\vert j\vert)^4}{(1+\vert k\vert)^\gamma} \|\mathbb{F}\|_{L^1(\Phi_4\, dx)}.\end{split}
\end{equation*} We may conclude, as
\[ \sum_{k\in\mathbb{Z}^3}\sum_{j\in\mathbb{Z}^3, \vert j-k\vert<8}   \frac {(1+\vert j\vert)^4}{(1+\vert k\vert)^\gamma} <+\infty.\]

Finally, we study $e^{\tau\Delta}\vb_2$ when $\tau\rightarrow +\infty$.   We write
\begin{equation*}\begin{split} \|e^{\tau\Delta}\vb_2\|_{L^1(\Phi_4\, dx)}\leq    C \|\frac 1{(1+\vert x\vert^2)^2}  &\int(\int_0^{+\infty} \vert (\vN\otimes\vN)\vN(W_{s+\tau}(x-y))\vert\, ds) \vert \mathbb{F}(y)\vert \, dy\\\ \leq   C''\|  \frac{1}{(1+\vert x\vert^2)^{ 2}} & \int \frac 1{(\sqrt \tau+\vert x-y\vert^2)^2} \vert \mathbb{F}(y)\vert \, ds\|_1
\end{split}\end{equation*} with, for $\tau>1$, 
\begin{equation*} \frac{1}{(1+\vert x\vert^2)^{ 2}}  \frac 1{(\sqrt \tau+\vert x-y\vert^2)^2} \vert \mathbb{F}(y)\vert \leq  \frac{1}{(1+\vert x\vert^2)^{ 2}}  \frac 1{(1+\vert x-y\vert^2)^2} \vert \mathbb{F}(y)\vert 
\in L^1(\mathbb{R}^2).\end{equation*}  By dominated convergence, we find  that $\lim_{\tau\rightarrow +\infty}\|e^{\tau\Delta}\vb_2\|_{L^1(\Phi_4\, dx)}=0$.
\end{proof}

 \section{Weak solutions for the Stokes equations in $L^1((0,T), L^1(\mathbb{R}^3,\frac {dx}{(1+\vert x\vert)^4} ))$}\label{sec:weak}
 In this section, we consider the Stokes equations
 
  \begin{equation}\label{Stokes1} \left\{ \begin{split} \partial_t\vu=&\Delta\vu -\mathbb{P}\Div(\mathbb{F})
\\ \Div\vu=&0  \\ \lim_{t\rightarrow 0} \vu(t,.)=&\vu_0\end{split}\right.\end{equation}
where the tensor $\mathbb{F}=(F_{i,j})_{1\leq i,j\leq 3}$ belongs to $L^1((0,T), L^1(\mathbb{R}^3,\frac {dx}{(1+\vert x\vert)^4} ))$, $\Div(\mathbb{F})=\vb$ with $b_j=\sum_{i=1}^3\partial_i F_{i,j}$ and where the solution  $\vu$ belongs to $L^1((0,T), L^1(\mathbb{R}^3,\frac {dx}{(1+\vert x\vert)^4} ))$. [Remark: we don't study the existence of such a solution, we assume in this section that it exists.]

\begin{theorem}[Stokes equations in weighted Lebesgue space] \label{theo:stokes}$ $ \\ Let  $0<T<+\infty$ and $\mathbb{F} \in L^1((0,T), L^1(\mathbb{R}^3,\frac {dx}{(1+\vert x\vert)^4} ))$.  Let $\vu$ be a solution of the Stokes equation
\begin{equation}\label{stokes} \partial_t\vu=\Delta\vu-\mathbb{P}\Div(\mathbb{F})\end{equation} with $\vu \in L^1((0,T), L^1(\mathbb{R}^3,\frac {dx}{(1+\vert x\vert)^4} ))$.
Then  we have  \[\partial_t \vu\in L^1((0,T), H^{-4}(\Phi_8\, dx))\text{ and } \vu\in \mathcal{C}([0,T], H^{-4}(\Phi_8\, dx)).\]
In particular, if $\varphi\in\mathcal{D}(\mathbb{R})$ with $\varphi(0)=1$ and $\varphi(T)=0$, we have
\begin{equation}\label{initial}  \lim_{t\rightarrow 0} \vu(t,.)=-\int_{0<t<T} \partial_t (\varphi \vu)\, dt.\end{equation} Moreover, we have
\[ \Div(\vu(t,.))=e^{t\Delta} \Div(\vu(0,.)).\]
\end{theorem}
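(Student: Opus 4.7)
The plan is to deduce the four conclusions from the Stokes equation (\ref{stokes}) by controlling each side in a single weighted Sobolev space. First I would invoke the Sobolev embedding stated in Section~\ref{sec:weighted}, $L^1(\Phi_4\,dx)\subset H^{-2}(\Phi_8\,dx)$ (the case $s=2>\frac 3 2$, $\gamma=8$); combining this with two applications of Lemma~\ref{le:diff} gives $\Delta\vu\in L^1((0,T),H^{-4}(\Phi_8\,dx))$. For the Leray-projected term, Proposition~\ref{prop:proj} with $\gamma=8>7$ and $\sigma=-3<-\frac 5 2$ produces, for a.e.\ $t$, the estimate $\|\mathbb{P}\Div(\mathbb{F}(t,\cdot))\|_{H^{-3}(\Phi_8\,dx)}\leq C\,\|\mathbb{F}(t,\cdot)\|_{L^1(\Phi_4\,dx)}$ (the bound is linear in $\mathbb{F}$ by inspection of that proof), so $\mathbb{P}\Div(\mathbb{F})\in L^1((0,T),H^{-4}(\Phi_8\,dx))$. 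Adding the two pieces yields $\partial_t\vu\in L^1((0,T),H^{-4}(\Phi_8\,dx))$.

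Second, since $\vu$ itself lies in $L^1((0,T),L^1(\Phi_4\,dx))\subset L^1((0,T),H^{-4}(\Phi_8\,dx))$ and has its distributional $t$-derivative in the same Bochner space, the standard embedding $W^{1,1}((0,T),E)\subset \mathcal{C}([0,T],E)$ valid for any Banach space $E$ provides a continuous representative $\vu\in\mathcal{C}([0,T],H^{-4}(\Phi_8\,dx))$. Formula (\ref{initial}) is then just integration by parts in time: for $\varphi\in\mathcal{D}(\mathbb{R})$ with $\varphi(0)=1$ and $\varphi(T)=0$,
\[ \int_0^T \partial_t(\varphi\vu)\,dt = \varphi(T)\,\vu(T,\cdot)-\varphi(0)\,\vu(0,\cdot)=-\vu(0,\cdot)=-\lim_{t\to 0}\vu(t,\cdot), \]
with equality in $H^{-4}(\Phi_8\,dx)$.

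For the divergence identity, I would use that $\mathbb{P}\Div(\mathbb{F})$ is divergence-free by construction (it is the field $\vb_1$ of Proposition~\ref{prop:proj}), so taking the divergence of (\ref{stokes}) in $\mathcal{D}'((0,T)\times\mathbb{R}^3)$ yields the scalar heat equation $\partial_t\Div\vu=\Delta\Div\vu$. Both $\Div\vu$ and $t\mapsto e^{t\Delta}\Div\vu(0,\cdot)$ are in $\mathcal{C}([0,T],\mathcal{S}'(\mathbb{R}^3))$, take the same value at $t=0$, and solve the same heat equation; pairing their difference $w(t)$ with the backward heat evolution $\psi(s,\cdot)=e^{(t-s)\Delta}\phi$ of an arbitrary $\phi\in\mathcal{D}$ and computing $\partial_s\langle w(s),\psi(s,\cdot)\rangle=0$ forces $w\equiv 0$, giving the claim. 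The only step that really requires care is extracting the linear pointwise-in-time bound on $\mathbb{P}\Div(\mathbb{F})$ from Proposition~\ref{prop:proj}; this amounts to re-reading its proof and observing that every estimate there, being built from Lemma~\ref{le:convol}, is linear in $\mathbb{F}$ with a constant independent of $t$.
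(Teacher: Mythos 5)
Your proposal is correct and follows essentially the same route as the paper: the embedding $L^1(\Phi_4\,dx)\subset H^{-2}(\Phi_8\,dx)$ together with Lemma \ref{le:diff} for $\Delta\vu$, the quantitative form of Proposition \ref{prop:proj} (with $\gamma=8$, $\sigma<-5/2$) for the projected term, the $W^{1,1}((0,T),E)\subset\mathcal{C}([0,T],E)$ embedding for continuity, and taking the divergence of the equation to get the heat evolution of $\Div\vu$. The extra details you supply (the integration by parts behind (\ref{initial}) and the backward-heat duality argument for uniqueness of $\Div\vu$) are steps the paper leaves implicit, not a different method.
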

 \begin{proof}   As $   L^1(\Phi_4 \, dx)\subset  H^{-2}(\Phi_8 \, dx)$, we have  $\vu\in L^1((0,T), H^{-2}(\Phi_8\, dx))$ and $\Delta u\in L^1((0,T), H^{-4}(\Phi_8\, dx))$. From Proposition \ref{prop:proj}, we know that, for $\gamma>7$ and $\sigma<-5/2$, \[ \|\mathbb{P}(\Div \mathbb{F})(t,.)\|_{H^\sigma(\Phi_\gamma\, dx)}\leq C_{\gamma,\sigma} \|\mathbb{F}(t,.)\|_{L^1(\Phi_4\, dx)},\] hence $\mathbb{P}(\Div \mathbb{F})\in L^1((0,T), H^{-4}(\Phi_8\, dx))$.
 
 From $\partial_t\vu \in L^1((0,T), H^{-4}(\Phi_8\, dx))$ and $\vu \in  L^1((0,T), H^{-4}(\Phi_8\, dx))$, we conclude that $\vu\in  \mathcal{C}([0,T], H^{-4}(\Phi_8\, dx))$.
Finally, we write $$\partial_t\Div \vu=\Div\partial_t \vu=\Div\Delta\vu=\Delta \Div\vu$$ so that $\Div \vu(t,.)=e^{t\Delta}(\Div \vu(0,.))$.
 \end{proof}

\begin{theorem}[Solutions bounded in weighted Lebesgue space] \label{theo:stokes2}$ $ \\  Let  $0<T<+\infty$, $1<p<+\infty$ and $\mathbb{F} \in L^\infty((0,T), L^p(\mathbb{R}^3,\frac {dx}{(1+\vert x\vert)^4} ))$.  Let $\vu$ be a  distribution  defined on $(0,T)\times\mathbb{R}^3$. Then the following assertions are equivalent:\\
$\bullet$ assertion A1: $\vu$ is a 
solution of the Stokes equation
\begin{equation} \left\{\begin{split} &\partial_t\vu=\Delta\vu-\mathbb{P}\Div(\mathbb{F})\\& \Div \vu=0\end{split}\right. \end{equation} with $\vu \in L^\infty((0,T), L^p(\mathbb{R}^3,\frac {dx}{(1+\vert x\vert)^4} ))$.
\\
$\bullet$ assertion A2:  there exists a tempered distribution $\vu_0$ on $\mathbb{R}^3$ such that $\Div\vu_0=0$,  $\vu_0\in   L^p(\mathbb{R}^3,\frac {dx}{(1+\vert x\vert)^4} ))$  and
\[ \vu=e^{t\Delta}\vu_0 -\int_0^t e^{(t-s)\Delta} \mathbb{P}\Div(\mathbb{F})\, ds.\]
We then have $\lim_{t\rightarrow 0} \vu(t,.)=\vu_0$.\end{theorem}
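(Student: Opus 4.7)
The plan is to prove the two implications separately.

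\emph{A2 $\Rightarrow$ A1.} This is a direct verification: differentiating the Duhamel formula in the sense of distributions gives $\partial_t\vu=\Delta\vu-\mathbb{P}\Div\mathbb{F}$, and applying $\Div$ kills both pieces (heat evolution of a divergence-free datum, plus the Leray projection), so $\Div\vu=0$. For the $L^\infty_tL^p(\Phi_4\,dx)$-bound I split the formula into its two pieces. The Gaussian enjoys the pointwise bound $W_t(x-y)\le C\sqrt{t}\,(\sqrt{t}+|x-y|)^{-4}$, so Lemma \ref{le:convol} gives $\|e^{t\Delta}\vu_0\|_{L^p(\Phi_4\,dx)}\le C\|\vu_0\|_{L^p(\Phi_4\,dx)}$, uniformly for $t\in(0,T)$. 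The kernel of $e^{(t-s)\Delta}\mathbb{P}\Div$ acting on tensors (essentially a derivative of the Oseen kernel) is dominated by $C(\sqrt{t-s}+|x-y|)^{-4}$, so Lemma \ref{le:convol} yields the factor $(t-s)^{-1/2}$, which is integrable on $(0,t)$. The claim $\lim_{t\to 0}\vu=\vu_0$ (strongly in $L^p(\Phi_4\,dx)$) then follows from density of $\mathcal D$ in $L^p(\Phi_4\,dx)$ for $1<p<+\infty$ and the fact that the Duhamel integral is $O(\sqrt{t})$ in that norm.

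\emph{A1 $\Rightarrow$ A2.} For $1<p<+\infty$, a H\"older estimate using $\Phi_4\in L^1$ gives $L^p(\Phi_4\,dx)\subset L^1(\Phi_4\,dx)$, so Theorem \ref{theo:stokes} applies and yields $\vu\in\mathcal C([0,T],H^{-4}(\Phi_8\,dx))$. Set $\vu_0:=\vu(0,\cdot)\in H^{-4}(\Phi_8\,dx)$; from $\Div\vu(t,\cdot)=e^{t\Delta}\Div\vu_0$ combined with $\Div\vu\equiv 0$, one reads off $\Div\vu_0=0$. The key upgrade is to promote $\vu_0$ from $H^{-4}(\Phi_8)$ to $L^p(\Phi_4)$; here I exploit the reflexivity of $L^p(\Phi_4\,dx)$ for $1<p<+\infty$: pick Lebesgue times $t_n\downarrow 0$ with $\|\vu(t_n,\cdot)\|_{L^p(\Phi_4)}\le\|\vu\|_{L^\infty_tL^p(\Phi_4)}$, extract a weakly convergent subsequence in $L^p(\Phi_4\,dx)$, and identify its limit with $\vu_0$ through the continuous embedding $L^p(\Phi_4\,dx)\hookrightarrow H^{-4}(\Phi_8\,dx)$. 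With $\vu_0\in L^p(\Phi_4\,dx)$ in hand, define $\tilde\vu$ by the Duhamel formula: by the first half, $\tilde\vu$ lies in the same space as $\vu$ and satisfies the same Stokes system with the same trace $\vu_0$ at $t=0$ in $H^{-4}(\Phi_8)$.

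\emph{Main obstacle: heat-equation uniqueness.} The remaining step is to show that $\vw:=\vu-\tilde\vu$ vanishes. By construction $\vw\in L^\infty((0,T),L^p(\Phi_4\,dx))\subset L^\infty((0,T),\mathcal S')$, $(\partial_t-\Delta)\vw=0$ in $\mathcal D'$, and $\vw\in\mathcal C([0,T],H^{-4}(\Phi_8\,dx))$ with $\vw(0,\cdot)=0$. The cleanest route is the Fourier-side argument: the partial Fourier transform $\hat\vw(t,\xi)$ is a continuous $\mathcal S'$-valued function of $t$ satisfying $\partial_t\hat\vw=-|\xi|^2\hat\vw$ with zero initial value, hence $\hat\vw\equiv 0$. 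Equivalently, for $0<\tau<t$ one has the semigroup identity $\vw(t,\cdot)=e^{(t-\tau)\Delta}\vw(\tau,\cdot)$ (uniqueness of the heat Cauchy problem from the positive time $\tau$, where $\vw(\tau,\cdot)\in L^p(\Phi_4\,dx)$), and continuity of $e^{(t-\tau)\Delta}$ on $H^{-4}(\Phi_8\,dx)$ (a consequence of Lemma \ref{le:convol} applied to the Gaussian) lets one pass $\tau\to 0^+$ to obtain $\vw(t,\cdot)=0$ for every $t>0$. Consequently $\vu=\tilde\vu$, which is A2; the final statement $\lim_{t\to 0}\vu=\vu_0$ then reads off the Duhamel formula as in the first part.
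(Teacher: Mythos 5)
Your proof is correct and follows the same overall architecture as the paper: for A2 $\Rightarrow$ A1 you use the Gaussian and Oseen kernel bounds together with Lemma \ref{le:convol}, exactly as the paper does, and for A1 $\Rightarrow$ A2 you invoke Theorem \ref{theo:stokes} to get $\vu\in\mathcal{C}([0,T],H^{-4}(\Phi_8\,dx))$ and then upgrade the trace $\vu_0$ to $L^p(\Phi_4\,dx)$ by weak compactness, which is the paper's "weak-$*$ limit in $L^p(\Phi_4\,dx)$" remark made precise. The one place where you genuinely go beyond the paper is the last step of A1 $\Rightarrow$ A2: the paper stops after identifying $\vu_0$ and never explicitly verifies that $\vu$ coincides with the Duhamel expression, whereas you introduce $\tilde\vu$ and reduce the identification to uniqueness for the heat equation with zero initial data in the class $L^\infty((0,T),L^p(\Phi_4\,dx))\cap\mathcal{C}([0,T],H^{-4}(\Phi_8\,dx))\subset\mathcal{C}([0,T],\mathcal{S}')$, settled by the Fourier-side ODE argument (test $\hat\vw(t,\cdot)$ against $e^{t|\xi|^2}\phi$ with $\phi\in\mathcal{D}$, which stays in $\mathcal{D}$). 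That step is legitimate and is in fact needed for the statement as written, so your version is the more complete of the two; just be aware that your alternative phrasing via the semigroup identity $\vw(t,\cdot)=e^{(t-\tau)\Delta}\vw(\tau,\cdot)$ presupposes the very uniqueness you are proving, so the Fourier argument should be taken as the actual proof.
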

\begin{proof} (A1) $\implies$ (A2): Let $\vu$ be a solution of the Stokes equation
\begin{equation}  \partial_t\vu=\Delta\vu-\mathbb{P}\Div(\mathbb{F})\end{equation} with $\vu \in L^\infty((0,T), L^p(\mathbb{R}^3,\frac {dx}{(1+\vert x\vert)^4} ))$.  As $\int \frac {dx}{(1+\vert x\vert)^4} <+\infty$, we have $\vu \in L^1((0,T), L^1(\mathbb{R}^3,\frac {dx}{(1+\vert x\vert)^4} ))$ and $\mathbb{F} \in L^1((0,T), L^1(\mathbb{R}^3,\frac {dx}{(1+\vert x\vert)^4} ))$. By Theorem \ref{theo:stokes}, we know that $\vu\in \mathcal{C}([0,T], H^{-4}(\Phi_8\, dx))$. We may then write $\lim_{t\rightarrow 0} \vu(t,.)=\vu_0$ as a strong limit in $H^{-4}(\Phi_8\, dx)$ but as well, since $\vu(t,.)$ is bounded in $L^p(\Phi_4\, dx)$, as a weak-* limit in  $L^p(\Phi_4\, dx)$.\\
(A2) $\implies$ (A1): From classical estimates on Oseen's tensor (see for instance section 4.5 in \cite{Lem24}), we have
\[ \vert e^{(t-s)\Delta} \mathbb{P}\Div(\mathbb{F})\vert\leq C \int \frac 1{(\sqrt{t-s}+\vert x-y\vert)^4} \vert \mathbb{F}(s,y)\vert\, dy\] and, by Lemma \ref{le:convol}, for $0<t<T$,
\begin{equation*}\begin{split} \|\int_0^t e^{(t-s)\Delta} \mathbb{P}\Div(\mathbb{F})\, ds\|_{L^p(\Phi_4\, dx)}\leq & C \int_0^t\max(\frac 1{\sqrt{t-s}},1) \| \mathbb{F}(s,.)\|_{L^p(\Phi_4\, dx)}\, ds\\ \leq& C'\sqrt t(1+\sqrt t)\|\mathbb{F}\|_{L^\infty ((0,T), L^p(\Phi_4\, dx))}.
\end{split}\end{equation*} Similarly, we write 
\[ \vert e^{t\Delta} \vu_0(x)\vert\leq C \int \frac {\sqrt t}{(\sqrt{t}+\vert x-y\vert)^4} \vert \vu_0(,y)\vert\, dy\] and 
\begin{equation*} \| e^{t\Delta} \vu_0\|_{L^p(\Phi_4\, dx)}\leq  C  \max(1,\sqrt t) \| \vu_0\|_{L^p(\Phi_4\, dx)}\tag*{\qedhere}
\end{equation*} \end{proof}

\section{Mollified equations} We want to find a weak solution to the Navier\ddh Stokes equations 
  \begin{equation}\label{NSEq5} \left\{ \begin{split} \partial_t\vu=&\Delta\vu -\mathbb{P}\Div(\vu\otimes\vu)
\\\Div\,\vu=&0  \\ \lim_{t\rightarrow 0} \vu(t,.)=&\vu_0\end{split}\right.\end{equation}   globally in time when $\vu_0$ is divergence free and $\vu_0\in L^p(\frac{dx}{(1+\vert x\vert)^\gamma})$ with $2< p<+\infty$ and $0<\gamma<2$.

 Following Leray \cite{Ler34}, we replace the Navier\ddh Stokes equations (\ref{NSEq5}) with the mollified equations  
  \begin{equation}\label{NSEq6} \left\{ \begin{split} \partial_t\vu_{\epsilon,\alpha}=&\Delta\vu_{\epsilon,\alpha} -\mathbb{P}\Div((\varphi_\epsilon*(\theta_\alpha\vu_{\epsilon,\alpha}))\otimes\vu_{\epsilon,\alpha})
\\\Div\,\vu_{\epsilon,\alpha}=&0  \\ \lim_{t\rightarrow 0} \vu_{\epsilon,\alpha}(t,.)=&\vu_{0}\end{split}\right.\end{equation} where $\varphi\in\mathcal{D}(\mathbb{R}^3)$ with $\varphi\geq 0$, $ \int\varphi(x)\, dx=1$, $\varphi(x)=0$ if $\vert x\vert>1$  and $\displaystyle \varphi_\epsilon(x)=\frac 1{\epsilon^3} \varphi(\frac x \epsilon)$ and where  $\theta\in\mathcal{D}(\mathbb{R}^3)$ with $0\leq\theta\leq 1$, $\theta(x)=1$ if $\vert x\vert\leq 1$, $\theta(x)=0$ if $\vert x\vert\geq 2$ and $\theta_\alpha(x)=\theta(\alpha x)$. 

We shall prove that equations (\ref{NSEq6}) have a solution \[\vu_{\epsilon,\alpha}\in \cap_{0<T<+\infty} L^\infty((0,T), L^p(\Phi_\gamma\, dx)).\] However, the control of $\vu_{\epsilon,\alpha}$ with respect to $\epsilon$  and $\alpha$ is not good enough when $({\epsilon,\alpha})$ goes to $(0,0)$: we find that
\[ \sup_{0<t<T} \|\vu_{\epsilon,\alpha}(t,.)\|_{L^p(\Phi_\gamma\, dx)}\leq C_{{\epsilon,\alpha},T, \vu_0} \text{ with } \lim_{{\epsilon,\alpha}\rightarrow (0,0)} C_{{\epsilon,\alpha},T,\vu_0}=+\infty.\]  Instead of $ L^p(\Phi_\gamma\, dx)$, we follow Calder\'on \cite{Cal90} and we shall work in $L^2(\Phi_2\, dx)+L^r$ (with $3<r<\infty$) and prove that, for every $T>0$, we
 have
\begin{equation}\label{eq:control1} \sup_{0<t<T} \|\vu_{\epsilon,\alpha}(t,.)\|_{L^2(\Phi_2\, dx)+L^r}\leq C_{T, \vu_0} <+\infty \end{equation} and
\begin{equation}\label{eq:control2}   \|\vu_{\epsilon,\alpha}(t,.)\|_{L^2((0,T), H^{\frac 1 2}(\Phi_4\, dx))}\leq C_{T, \vu_0} <+\infty.  \end{equation}  We shall see that estimates (\ref{eq:control1}) and (\ref{eq:control2}) are sufficient to grant some sequence $\vu_{{\epsilon_n,\alpha_n}}$ is weakly convergent  in $L^2((0,T), \Phi_4\, dx)$ to a solution $\vu$ of the Navier\ddh Stokes equations.

\subsection{First estimates in the norm of $L^p(\Phi_\gamma\, dx)$.}
In this section, $2\leq p<+\infty$ and $2\leq\gamma\leq 4$.

\begin{lemma}\label{le:troncatur} If $f\in L^p(\Phi_\gamma\, dx)$ then \[ \| \varphi_\epsilon*(\theta_\alpha f)\|_\infty\leq \|\varphi\|_{\frac p{p-1}} \frac 1{\epsilon ^{\frac 3 p}} (1+\frac 2\alpha)^{\frac \gamma p} \|f\|_{L^p(\Phi_\gamma\, dx)}.\] \end{lemma}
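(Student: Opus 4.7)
The plan is to estimate the convolution pointwise by Hölder's inequality and then control the truncation factor by exploiting the compact support of $\theta_\alpha$.

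First I would write, for any $x\in\mathbb{R}^3$,
\[
|\varphi_\epsilon*(\theta_\alpha f)(x)|=\left|\int \varphi_\epsilon(x-y)\,\theta_\alpha(y)\,f(y)\,dy\right|\leq \|\varphi_\epsilon\|_{\frac{p}{p-1}}\,\|\theta_\alpha f\|_p.
\]
The first factor is handled by a scaling computation: $\varphi_\epsilon(x)=\epsilon^{-3}\varphi(x/\epsilon)$ gives $\|\varphi_\epsilon\|_q=\epsilon^{\frac{3}{q}-3}\|\varphi\|_q$, so with $q=p/(p-1)$ one obtains $\|\varphi_\epsilon\|_{p/(p-1)}=\epsilon^{-3/p}\|\varphi\|_{p/(p-1)}$.

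Next I would control $\|\theta_\alpha f\|_p$ in terms of the weighted norm. Since $\theta_\alpha$ is supported in $\{|y|\leq 2/\alpha\}$ and $0\leq\theta_\alpha\leq 1$, on that support I have the crude bound
\[
(1+|y|^2)^{\gamma/2}\leq \bigl(1+(2/\alpha)^2\bigr)^{\gamma/2}\leq (1+2/\alpha)^\gamma,
\]
using $1+t^2\leq(1+t)^2$ for $t\geq 0$. Multiplying and dividing by the weight gives
\[
\|\theta_\alpha f\|_p^p=\int |\theta_\alpha(y)|^p\,(1+|y|^2)^{\gamma/2}\,\frac{|f(y)|^p}{(1+|y|^2)^{\gamma/2}}\,dy\leq (1+2/\alpha)^{\gamma}\,\|f\|_{L^p(\Phi_\gamma\, dx)}^p.
\]
Taking the $p$-th root and combining with the previous estimate yields exactly the bound in the statement.

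There is no real obstacle: the only mild point to notice is the convexity inequality $1+t^2\leq(1+t)^2$ that converts $\bigl(1+(2/\alpha)^2\bigr)^{\gamma/2}$ into $(1+2/\alpha)^\gamma$ so the exponent matches the form requested in the lemma. The hypotheses $2\leq p<\infty$ and $2\leq\gamma\leq 4$ are not really used in this step; it works for any $1\leq p<\infty$ and $\gamma\geq 0$ (they will matter in subsequent lemmas where the weight interacts with energy estimates).
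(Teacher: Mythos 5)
Your proof is correct and follows essentially the same route as the paper: Hölder's inequality applied to the convolution, the scaling identity $\|\varphi_\epsilon\|_{p/(p-1)}=\epsilon^{-3/p}\|\varphi\|_{p/(p-1)}$, and the bound $(1+\vert y\vert^2)^{\gamma/2}\leq (1+2/\alpha)^\gamma$ on the support of $\theta_\alpha$ (the paper packages this last step as $\|\theta_\alpha\Phi^{-\gamma/p}\|_\infty$ inside a three-factor Hölder, which is the same computation). Your remark that the restrictions on $p$ and $\gamma$ are not needed here is also accurate.
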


\begin{proof} Just write\[ \| \varphi_\epsilon*(\theta_\alpha f)\|_\infty\leq \|\varphi_\epsilon\|_{\frac p{p-1}} \|\theta_\alpha \Phi^{-\frac \gamma p}\|_\infty \|\Phi^{\frac \gamma p}f\|_p.\tag*{\qedhere}\] \end{proof}

\begin{lemma}\label{le:heat} If $\vu_0\in L^p(\Phi_\gamma\, dx)$, then $e^{t\Delta}\vu_0\in \mathcal{C}([0,+\infty[, L^p(\Phi_\gamma\, dx))$ and
\[ \|e^{t\Delta}\vu_0\|_{L^p(\Phi_\gamma\, dx)}\leq C  \max(1,\sqrt t) \|\vu_0\|_{L^p(\Phi_\gamma\, dx)}.\]
\end{lemma}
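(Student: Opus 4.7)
The lemma has two parts: a uniform operator bound on $e^{t\Delta}$ in $L^p(\Phi_\gamma\,dx)$ and strong continuity of $t\mapsto e^{t\Delta}\vu_0$. The plan is to establish the norm bound by combining a pointwise Gaussian kernel estimate with Lemma~\ref{le:convol}, and then to obtain continuity via the standard density argument together with dominated convergence.

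For the norm bound, the heat kernel $W_t(x)=(4\pi t)^{-3/2}e^{-|x|^2/(4t)}$ satisfies
\[ W_t(x-y) \leq \frac{C\sqrt t}{(\sqrt t+|x-y|)^4}, \]
obtained from the trivial estimate $W_t\leq C t^{-3/2}$ and the decay bound $|x|^4 W_t(x)\leq C\sqrt t$ (itself a consequence of $r^4 e^{-r^2/4}\leq C$). Hence
\[ |e^{t\Delta}\vu_0(x)| \leq C\sqrt t\int \frac{|\vu_0(y)|}{(\sqrt t+|x-y|)^4}\,dy. \]
For $0<t\leq 1$, the remark following Lemma~\ref{le:convol} (which covers $0\leq\gamma\leq 4$) gives $\|e^{t\Delta}\vu_0\|_{L^p(\Phi_\gamma\, dx)}\leq C\sqrt t\cdot t^{-1/2}\|\vu_0\|_{L^p(\Phi_\gamma\, dx)}=C\|\vu_0\|_{L^p(\Phi_\gamma\, dx)}$. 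For $t\geq 1$ I would bound $(\sqrt t+|x-y|)^{-4}\leq (1+|x-y|)^{-4}$ and apply the same remark with $t=1$, obtaining $\|e^{t\Delta}\vu_0\|_{L^p(\Phi_\gamma\, dx)}\leq C\sqrt t\,\|\vu_0\|_{L^p(\Phi_\gamma\, dx)}$. The two regimes combine to give the $\max(1,\sqrt t)$ factor.

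For continuity, the only delicate point is continuity at $t=0$; continuity at any $t_0>0$ then follows from the semigroup identity $e^{(t_0+h)\Delta}\vu_0-e^{t_0\Delta}\vu_0=e^{t_0\Delta}(e^{h\Delta}\vu_0-\vu_0)$ (and its symmetric version for $t<t_0$) combined with the operator bound just obtained. To prove $e^{t\Delta}\vu_0\to\vu_0$ in $L^p(\Phi_\gamma\, dx)$ as $t\to 0^+$, I would use the density of $\mathcal{D}(\mathbb{R}^3)$ in $L^p(\Phi_\gamma\, dx)$ recalled in Section~\ref{sec:weighted} to reduce, via the operator bound applied to $\vu_0-\phi$, to the case $\vu_0=\phi\in\mathcal{D}$. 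For such a $\phi$ (supported in some ball $B(0,R)$) the pointwise convergence $e^{t\Delta}\phi(x)\to\phi(x)$ is clear, and the family $\{e^{t\Delta}\phi:0<t\leq 1\}$ is dominated uniformly by a function of the form $\|\phi\|_\infty\chi_{B(0,2R)}(x)+C_\phi(1+|x|)^{-4}$ (using the kernel bound with $\sqrt t\leq 1$), which lies in $L^p(\Phi_\gamma\, dx)$ for $\gamma\leq 4$. Dominated convergence then closes the argument.

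I do not anticipate a serious obstacle: the whole statement reduces to the pointwise kernel estimate above and the already-established convolution bound of Lemma~\ref{le:convol}. The only mild subtlety worth noting is that the $\sqrt t$ growth for large $t$ is far from sharp---diffusion actually transports mass into regions where the weight $\Phi_\gamma$ is smaller---but this crude bound suffices for the later applications and falls out of the same kernel estimate used in the small-$t$ regime.
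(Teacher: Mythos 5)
Your proof is correct and follows essentially the same route as the paper: the pointwise Gaussian bound $W_t(x-y)\leq C\sqrt t\,(\sqrt t+|x-y|)^{-4}$ combined with Lemma~\ref{le:convol} (and its remark) gives the operator bound, and continuity is obtained by density plus the uniform bound. The only immaterial difference is in the density step: the paper approximates through $W^{2,p}$ and uses the Lipschitz-in-time estimate $\|e^{t\Delta}\vu_0-e^{\tau\Delta}\vu_0\|_p\leq(\tau-t)\|\Delta\vu_0\|_p$, whereas you approximate by $\mathcal{D}$, prove continuity at $t=0$ by dominated convergence, and propagate it with the semigroup identity.
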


\begin{proof}  We write again
\[ \vert e^{t\Delta} \vu_0(x)\vert\leq C \int \frac {\sqrt t}{(\sqrt{t}+\vert x-y\vert)^4} \vert \vu_0(y)\vert\, dy\] and 
\begin{equation*} \| e^{t\Delta} \vu_0\|_{L^p(\Phi_\gamma\, dx)}\leq  C  \max(1,\sqrt t) \| \vu_0\|_{L^p(\Phi_\gamma\, dx)}.\end{equation*} Thus, for $0<T<+\infty$, convolution with the heat kernel is a bounded map from $L^p(\Phi_\gamma\, dx)$ to $L^\infty((0,T), L^p(\Phi_\gamma\, dx))$.

We then remark that $L^p(dx)$ is dense in $L^p(\Phi_\gamma\, dx)$, and that $W^{2,p}$ is dense in $L^p$. If $\vu_0\in W^{2,p}$, then, for $0\leq t\leq \tau$, we have
\[ \|e^{t\Delta}\vu_0-e^{\tau\Delta}\vu_0\|_{L^p(\Phi_\gamma\, dx)}\leq \|e^{t\Delta}\vu_0-e^{\tau\Delta}\vu_0\|_p\leq \int_t^\tau \| e^{s\Delta} \Delta \vu_0\|_p\, dx\leq (\tau-t)  \|\Delta \vu_0\|_p.\] Thus,  convolution with the heat kernel is a bounded map from $W^{2,p}$ to $\mathcal{C}([0,T], L^p(\Phi_\gamma\, dx))$ for the $W^{2,p}$ norm and  from $W^{2,p}$ to $L^\infty([0,T], L^p(\Phi_\gamma\, dx))$ for the $L^p(\Phi_\gamma\, dx)$ norm. Thus, it is a bounded map 
 from $L^p(\Phi_\gamma\, dx)$ to $\mathcal{C}([0,T], L^p(\Phi_\gamma\, dx))$. 
\end{proof}

\begin{lemma}\label{le:heat2} If  $\mathbb{F}\in L^\infty((0,T),L^p(\Phi_\gamma\, dx)$ (where $0<T<+\infty$), then $$\int_0^t e^{(t-s)\Delta}\mathbb{P}\Div\mathbb{F}\, ds\in \mathcal{C}([0,T], L^p(\Phi_\gamma\, dx))$$ and, for $0\leq t\leq T$, 
\[ \| \int_0^t e^{(t-s)\Delta}\mathbb{P}\Div\mathbb{F}\, ds\|_{L^p(\Phi_\gamma\, dx)}\leq C  \max(t,\sqrt t) \|\mathbb{F}\|_{L^\infty((0,T), L^p(\Phi_\gamma\, dx))}.\]
\end{lemma}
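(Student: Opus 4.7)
The plan is to follow the scheme already used in the proofs of Theorem \ref{theo:stokes2} and Lemma \ref{le:heat}: I would first derive a weighted $L^p$ bound on the Oseen-kernel convolution uniformly in time, and then obtain continuity in $t$ from the strong continuity of the heat semigroup on $L^p(\Phi_\gamma\,dx)$ already established in Lemma \ref{le:heat}.

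Write $v(t)=\int_0^t e^{(t-s)\Delta}\mathbb{P}\Div\mathbb{F}\,ds$. The pointwise Oseen-kernel estimate (quoted in the proof of Theorem \ref{theo:stokes2})
\[ \vert e^{\tau\Delta}\mathbb{P}\Div\mathbb{G}(x)\vert\leq C\int\frac{1}{(\sqrt\tau+\vert x-y\vert)^4}\vert\mathbb{G}(y)\vert\,dy,\]
combined with the Remark following Lemma \ref{le:convol} (which extends that convolution bound to all $0\leq\gamma\leq 4$), yields, for $0<\tau\leq 1$,
\[ \| e^{\tau\Delta}\mathbb{P}\Div\mathbb{G}\|_{L^p(\Phi_\gamma\,dx)}\leq \frac{C}{\sqrt\tau}\|\mathbb{G}\|_{L^p(\Phi_\gamma\,dx)}.\]
For $\tau>1$, since $\sqrt\tau+\vert x-y\vert\geq 1+\vert x-y\vert$, the kernel is pointwise dominated by $(1+\vert x-y\vert)^{-4}$, and another application of Lemma \ref{le:convol} (with $t=1$) gives the same bound without the singular prefactor. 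Merging the two regimes,
\[ \| e^{\tau\Delta}\mathbb{P}\Div\mathbb{G}\|_{L^p(\Phi_\gamma\,dx)}\leq C\max\!\Bigl(\frac{1}{\sqrt\tau},1\Bigr)\|\mathbb{G}\|_{L^p(\Phi_\gamma\,dx)}.\]
Inserting $\tau=t-s$, $\mathbb{G}=\mathbb{F}(s,\cdot)$ and using the elementary estimate $\int_0^t\max((t-s)^{-1/2},1)\,ds\leq C\max(\sqrt t,t)$ (computed by splitting at $s=t-1$ when $t>1$), one obtains exactly the quantitative inequality stated in the lemma.

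For the continuity claim, for $0\leq t_1<t_2\leq T$ I would use the clean decomposition
\[ v(t_2)-v(t_1)=(e^{(t_2-t_1)\Delta}-I)\,v(t_1)+\int_{t_1}^{t_2}e^{(t_2-s)\Delta}\mathbb{P}\Div\mathbb{F}(s)\,ds.\]
Via the change of variable $s'=s-t_1$, the second term is controlled in $L^p(\Phi_\gamma\,dx)$ by $C\max(\sqrt{t_2-t_1},t_2-t_1)\|\mathbb{F}\|_{L^\infty((0,T),L^p(\Phi_\gamma))}$ by the estimate just proved, so it vanishes as $t_2\to t_1$. Since $v(t_1)\in L^p(\Phi_\gamma\,dx)$, the first term tends to $0$ by the strong continuity of $e^{\tau\Delta}$ at $\tau=0^+$ on $L^p(\Phi_\gamma\,dx)$, which is the content of Lemma \ref{le:heat}. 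Continuity at $t_1=0$ is a special case because $v(0)=0$.

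I do not anticipate any serious obstacle: the estimate is routine once one has the two-regime splitting of the Oseen-kernel convolution. The only points requiring care are to invoke the extension of Lemma \ref{le:convol} to the full range $0\leq\gamma\leq 4$ (provided by the Remark), to dominate the kernel by $(1+\vert x-y\vert)^{-4}$ when $\tau>1$, and to reduce the continuity question to the semigroup continuity at $0^+$ via the factorisation $e^{(t_2-s)\Delta}=e^{(t_2-t_1)\Delta}\,e^{(t_1-s)\Delta}$.
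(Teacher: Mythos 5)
Your quantitative bound is essentially the paper's own argument: the Oseen-kernel domination together with Lemma \ref{le:convol} (and the remark extending it to general $\gamma$) gives $\|e^{\tau\Delta}\mathbb{P}\Div\mathbb{G}\|_{L^p(\Phi_\gamma\,dx)}\leq C\max(\tau^{-1/2},1)\|\mathbb{G}\|_{L^p(\Phi_\gamma\,dx)}$, and integrating in $s$ yields the stated $C\max(\sqrt t,t)$ bound. No issue there.

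The continuity argument, however, has a genuine gap. Your decomposition
\[ v(t_2)-v(t_1)=(e^{(t_2-t_1)\Delta}-I)\,v(t_1)+\int_{t_1}^{t_2}e^{(t_2-s)\Delta}\mathbb{P}\Div\mathbb{F}(s)\,ds \]
proves only continuity from the right: fixing $t_1$ and letting $t_2\downarrow t_1$, the first term tends to $0$ because $v(t_1)$ is a \emph{fixed} element of $L^p(\Phi_\gamma\,dx)$. Continuity at $t_0$ also requires $v(t_1)\to v(t_0)$ as $t_1\uparrow t_0$, and there the first term is $(e^{(t_0-t_1)\Delta}-I)v(t_1)$, i.e.\ the operator is applied to a \emph{moving} family of functions. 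Lemma \ref{le:heat} gives $\|(e^{\sigma\Delta}-I)f\|_{L^p(\Phi_\gamma\,dx)}\to 0$ for each fixed $f$, not uniformly over a merely bounded set such as $\{v(t)\}_{0\leq t\leq T}$ (a bounded, right-continuous curve need not be continuous), and the rewriting $(e^{\sigma\Delta}-I)v(t_1)=(e^{\sigma\Delta}-I)v(t_0)-(e^{\sigma\Delta}-I)(v(t_0)-v(t_1))$ is circular because the operator norms of $e^{\sigma\Delta}-I$ do not tend to $0$. To close the gap you need some equicontinuity: for instance split $\int_0^{t_1}=\int_0^{t_1-\delta}+\int_{t_1-\delta}^{t_1}$, bound the short piece by $C\sqrt\delta\,\|\mathbb{F}\|_{L^\infty_tL^p(\Phi_\gamma\,dx)}$, and on the long piece use a smoothing estimate of the type $\|\Delta e^{\sigma\Delta}\mathbb{P}\Div\mathbb{G}\|_{L^p(\Phi_\gamma\,dx)}\leq C\sigma^{-3/2}\|\mathbb{G}\|_{L^p(\Phi_\gamma\,dx)}$ to get a bound $C(t_0-t_1)\delta^{-3/2}$, then optimize in $\delta$. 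The paper sidesteps this entirely with a density argument: $\mathcal{L}$ is bounded from $L^4((0,T),L^p(\Phi_\gamma\,dx))$ to $L^\infty((0,T),L^p(\Phi_\gamma\,dx))$, it maps the dense subspace $L^\infty((0,T),W^{2,p})$ into Lipschitz (hence continuous) curves since $\partial_t\mathcal{L}(\mathbb{F})$ is then bounded in $L^p$, and continuity passes to the uniform limit. Either repair works, but as written your left-continuity step does not follow from what you invoke.
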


\begin{proof}  We write  
\[ \vert  \int_0^t e^{(t-s)\Delta}\mathbb{P}\Div\mathbb{F}\, ds\vert\leq C \int_0^t\int \frac 1{\sqrt{t-s}+\vert x-y\vert)^4} \vert  \mathbb{F}(s,y)\vert\, dy\] and 
\begin{equation*}\begin{split} \| \int_0^t e^{(t-s)\Delta}\mathbb{P}\Div\mathbb{F}\, ds\|_{L^p(\Phi_\gamma\, dx)}\leq&  C  \int_0^t \max(1,\frac 1{\sqrt{ t-s}}) \| \mathbb{F}(s,.)\|_{L^p(\Phi_\gamma\, dx)}\\ \leq& C'(t+\sqrt t) \|\mathbb{F}\|_{L^\infty((0,T), L^p(\Phi_\gamma\, dx))}..\end{split}\end{equation*} Thus, for $0<T<+\infty$, the operator $\mathcal{L}$ defined by
$$ \mathbb{F}\mapsto \mathcal{L}(\mathbb{F})= \int_0^t e^{(t-s)\Delta}\mathbb{P}\Div\mathbb{F}\, ds$$  is a bounded map from $L^\infty((0,T), L^p(\Phi_\gamma\, dx))$ to $L^\infty((0,T), L^p(\Phi_\gamma\, dx))$.

We then remark that $L^\infty((0,T),L^p(\Phi_\gamma\, dx)$   is embedded in $L^4((0,T),L^p(\Phi_\gamma\, dx))$  and that $L^\infty((0,T),W^{2,p})$ is dense in $L^4((0,T),L^p(\Phi_\gamma\, dx))$.  The operator $\mathcal{L}$   is a bounded map from $L^4((0,T), L^p(\Phi_\gamma\, dx))$ to $L^\infty((0,T), L^p(\Phi_\gamma\, dx))$:
\begin{equation*}\begin{split} \| \int_0^t e^{(t-s)\Delta}\mathbb{P}\Div\mathbb{F}\, ds\|_{L^p(\Phi_\gamma\, dx)}\leq&  C  \int_0^t \max(1,\frac 1{\sqrt{ t-s}}) \| \mathbb{F}(s,.)\|_{L^p(\Phi_\gamma\, dx)}\, ds\\ \leq& C \left( 
  \int_0^t \max(1,\frac 1{\sqrt{ t-s}}) ^{4/3}\, ds\right)^{3/4}  \|\mathbb{F}\|_{L^4((0,T), L^p(\Phi_\gamma\, dx))}\\ \leq& C' \max(t^{\frac 3 4}, t^{\frac 1 4}) \|\mathbb{F}\|_{L^4((0,T), L^p(\Phi_\gamma\, dx))}.\end{split}\end{equation*}

If $\mathbb{F}\in L^\infty((0,T),W^{2,p})$, then, for $0\leq t\leq T$, we have
\begin{equation*}\begin{split} \|\partial_t \int_0^t e^{(t-s)\Delta}\mathbb{P}\Div\mathbb{F}\, ds\|_{L^p(\Phi_\gamma\, dx)}\leq & \|\partial_t \int_0^t e^{(t-s)\Delta}\mathbb{P}\Div\mathbb{F}\, ds\|_{p}\\=&\| \mathbb{P}\Div\mathbb{F}(t,.)+\int_0^t e^{(t-s)\Delta}\mathbb{P}\Div\Delta\mathbb{F}\, ds\|_{p}\\ \leq&   C (\|\mathbb{F}(t,.)\|_{W^{2,p}} + \int_0^t  \frac 1{\sqrt{ t-s}} \|\Delta \mathbb{F}(s,.)\|_p\, ds)\\ \leq&  C'(1+\sqrt t) \|\mathbb{F}\|_{L^\infty((0,T), W^{2,p})}.
\end{split}\end{equation*}
  Thus, $\mathcal{L}$    is a bounded map from $ L^\infty((0,T),W^{2,p})$ to $\mathcal{C}([0,T], L^p(\Phi_\gamma\, dx))$ for the $ L^\infty((0,T),W^{2,p})$ norm and  from $ L^\infty((0,T),W^{2,p})$ to $L^\infty([0,T], L^p(\Phi_\gamma\, dx))$ for the $L^4((0,T), L^p(\Phi_\gamma\, dx))$ norm; we find that   $\mathcal{L}$    is a bounded map  
 from $L^\infty((0,T),L^p(\Phi_\gamma\, dx))$ to $\mathcal{C}([0,T], L^p(\Phi_\gamma\, dx))$. 
\end{proof}

\begin{proposition} \label{prop:mollif} Let $2\leq p<+\infty$, $2\leq\gamma\leq 4$ and $\vu_0\in L^p(\Phi_\gamma\ dx)$.  The mollified equations  
  \begin{equation}\label{NSEq7} \left\{ \begin{split} \partial_t\vu_{\epsilon,\alpha}=&\Delta\vu_{\epsilon,\alpha} -\mathbb{P}\Div((\varphi_\epsilon*(\theta_\alpha\vu_{\epsilon,\alpha}))\otimes\vu_{\epsilon,\alpha})
 \\ \lim_{t\rightarrow 0} \vu_{\epsilon,\alpha}(t,.)=&\vu_{0}\end{split}\right.\end{equation} have a unique (maximal) solution in $\mathcal{C}([0,T_{\epsilon,\alpha} [, L^p(\Phi_\gamma\, dx))$.
 
 If the maximal time of existence $T_{\epsilon,\alpha}$ is finite, then $\lim_{t\rightarrow T_{\epsilon,\alpha}} \|\vu_\epsilon(t,.)\|_{L^2(\Phi_4\, dx)}=+\infty$.
 
 There exists a constant $C_0>0$ such that
 \[  T_{\epsilon,\alpha}  >\min(1, \frac {\epsilon^3 \alpha^4}{C_0  \|\vu_0\|_{L^p(\Phi_\gamma\, dx)}^2 (2+\alpha)^4}).\]
\end{proposition}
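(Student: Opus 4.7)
The plan is to recast the mollified system as a fixed-point problem for the mild formulation
\[\vu_{\epsilon,\alpha}(t,\cdot) = e^{t\Delta}\vu_0 - \int_0^t e^{(t-s)\Delta}\mathbb{P}\Div\bigl((\varphi_\epsilon*(\theta_\alpha\vu_{\epsilon,\alpha}))\otimes\vu_{\epsilon,\alpha}\bigr)(s,\cdot)\,ds\]
and solve it by a Picard contraction in the Banach space $X_T = \mathcal{C}([0,T], L^p(\Phi_\gamma\,dx))$ with the sup norm. The two linear ingredients are ready to use: Lemma \ref{le:heat} yields $\|e^{t\Delta}\vu_0\|_{X_T} \leq C\|\vu_0\|_{L^p(\Phi_\gamma\,dx)}$ for $T\leq 1$, and Lemma \ref{le:heat2} controls the Duhamel term while guaranteeing the $\mathcal{C}([0,T], L^p(\Phi_\gamma\,dx))$-regularity of the output.

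The heart of the argument is a bilinear estimate in which the mollifier absorbs one factor in a \emph{weaker} norm. Applying Lemma \ref{le:troncatur} with the pair $(2,4)$ in place of the ambient $(p,\gamma)$ one obtains
\[\|\varphi_\epsilon*(\theta_\alpha\vu)\|_\infty \leq \|\varphi\|_2\,\epsilon^{-3/2}(1+2/\alpha)^2\,\|\vu\|_{L^2(\Phi_4\,dx)},\]
hence
\[\|(\varphi_\epsilon*(\theta_\alpha\vu))\otimes\vv\|_{L^p(\Phi_\gamma\,dx)} \leq C\,\epsilon^{-3/2}(1+2/\alpha)^2\,\|\vu\|_{L^2(\Phi_4\,dx)}\|\vv\|_{L^p(\Phi_\gamma\,dx)}.\]
For the existence step I would absorb the $L^2(\Phi_4\,dx)$ factor via the H\"older embedding $L^p(\Phi_\gamma\,dx)\subset L^2(\Phi_4\,dx)$ (valid in the range $2\leq p<+\infty$, $2\leq\gamma\leq 4$), and combine with Lemma \ref{le:heat2} to get, for $T\leq 1$,
\[\|B_{\epsilon,\alpha}(\vu,\vv)\|_{X_T} \leq C_1\sqrt{T}\,\epsilon^{-3/2}\frac{(2+\alpha)^2}{\alpha^2}\,\|\vu\|_{X_T}\|\vv\|_{X_T}.\]
The standard contraction lemma then produces a unique fixed point in a closed ball of $X_T$ provided $C_2\sqrt{T}\,\epsilon^{-3/2}(2+\alpha)^2\alpha^{-2}\|\vu_0\|_{L^p(\Phi_\gamma\,dx)} < 1$ and $T\leq 1$, which is exactly the asserted lower bound
\[T < \min\!\Bigl(1,\;\epsilon^3\alpha^4/\bigl(C_0\|\vu_0\|_{L^p(\Phi_\gamma\,dx)}^2(2+\alpha)^4\bigr)\Bigr).\]
Uniqueness in all of $X_T$ follows from a direct Gronwall estimate on the difference of two solutions, and the usual patching argument yields the maximal existence time $T_{\epsilon,\alpha}$.

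The delicate point, which I expect to be the main obstacle, is the blow-up alternative stated in the weaker $L^2(\Phi_4\,dx)$ norm. The device is precisely the asymmetric bilinear estimate above, in which only \emph{one} factor sits in $L^2(\Phi_4\,dx)$. Arguing by contradiction, if $M := \sup_{0<t<T_{\epsilon,\alpha}}\|\vu_{\epsilon,\alpha}(t,\cdot)\|_{L^2(\Phi_4\,dx)} < +\infty$, the integral equation turns into the \emph{linear} singular integral inequality
\[\|\vu_{\epsilon,\alpha}(t,\cdot)\|_{L^p(\Phi_\gamma\,dx)} \leq C\|\vu_0\|_{L^p(\Phi_\gamma\,dx)} + C_{\epsilon,\alpha}\,M\!\int_0^t \max\!\Bigl(1,\tfrac{1}{\sqrt{t-s}}\Bigr)\|\vu_{\epsilon,\alpha}(s,\cdot)\|_{L^p(\Phi_\gamma\,dx)}\,ds,\]
and a Gronwall-type lemma for weakly singular kernels bounds $\|\vu_{\epsilon,\alpha}\|_{X_{T_{\epsilon,\alpha}}}$ uniformly up to $T_{\epsilon,\alpha}$. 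Restarting Picard from the now well-defined datum $\vu_{\epsilon,\alpha}(T_{\epsilon,\alpha},\cdot) \in L^p(\Phi_\gamma\,dx)$ would strictly extend the solution, contradicting maximality.
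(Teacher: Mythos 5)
Your proposal is correct and follows essentially the same route as the paper: the mild formulation solved by Picard contraction in $\mathcal{C}([0,T],L^p(\Phi_\gamma\,dx))$ using Lemmas \ref{le:troncatur}, \ref{le:heat} and \ref{le:heat2}, with the asymmetric bilinear estimate placing the mollified factor in $L^\infty$ via its $L^2(\Phi_4\,dx)$ norm, yielding the stated lower bound on $T_{\epsilon,\alpha}$. Your treatment of the blow-up alternative (boundedness in $L^2(\Phi_4\,dx)$ linearizes the Duhamel term, then a weakly singular Gronwall bound lets one restart and extend) is only a cosmetic variant of the paper's absorption argument near $T_{\epsilon,\alpha}$, and rests on the same key observation.
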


\begin{proof}. We consider the fixed point problem in $\mathcal{C}([0,T], L^p(\Phi_\gamma\, dx))$
\[ \vu=e^{t\Delta}\vu_0-B_{\epsilon,\alpha}(\vu,\vu),\] where
\[ B_{\epsilon,\alpha}(\vv,\vw)=\int_0^t e^{(t-s)\Delta} \mathbb{P}\Div((\varphi_\epsilon*(\theta_\alpha \vv))\otimes\vw)\, ds.\].  We define $$R=\sup_{t\in [0,T]} \|e^{t\Delta}\vu_0\|_{L^p(\Phi_\gamma\, dx)}$$ and we want to prove that the map $\vu\mapsto e^{t\Delta}\vu_0-B_{\epsilon,\alpha}(\vu,\vu)$ is a contraction in $B_R=\{ \vu\in \mathcal{C}([0,T], L^p(\Phi_\gamma\, dx))\ /\ \sup_{t\in [0,T]} \| \vu(t,.)\|_{L^p(\Phi_\gamma\, dx)}\leq 2R\}$.

Fist, we use Lemma \ref{le:heat} and get that $R\leq  C_1  \max(1,\sqrt T) \|\vu_0\|_{L^p(\Phi_\gamma\, dx)}$.

As $L^p(\Phi_\gamma\, dx)\subset L^4(\Phi_2\, dx)$, we can use Lemmas \ref{le:troncatur}  and \ref{le:heat2} and find, for $\vv$, $\vw$ in $\mathcal{C}([0,T], L^p(\Phi_\gamma\, dx))$,  \[ \| \varphi_\epsilon*(\theta_\alpha \vv(t,.))\|_\infty\leq \|\varphi\|_2 \frac 1{\epsilon ^{\frac 3 2}} (\frac{2+\alpha}\alpha)^2 \|\vv(t,.)\|_{L^p(\Phi_\gamma\, dx)}\]  and, for $0<t<T$, 
\begin{equation*}\begin{split} & \| B_\epsilon(\vv,\vw)(t,.)\|_{L^p(\Phi_\gamma\, dx)}\\ & \leq C_2 (T+\sqrt T)  \frac 1{\epsilon ^{\frac 7 2}} (2+\epsilon)^2  \sup_{t\in [0,T]} \| \vv(t,.)\|_{L^p(\Phi_\gamma\, dx)}\sup_{t\in [0,T]} \| \vw(t,.)\|_{L^p(\Phi_\gamma\, dx)}. \end{split}\end{equation*} Thus, we will have a contraction in $B_R$ if
\[4C_2 (T+\sqrt T) \frac 1{\epsilon ^{\frac 3 2}} (\frac{2+\alpha}\alpha)^2 R<1, \] in particular if
\[4C_1C_2 \sqrt T (1+\sqrt T)^2 \frac 1{\epsilon ^{\frac 3 2}} (\frac{2+\alpha}\alpha)^2 \|\vu_0\|_{L^p(\Phi_\gamma\, dx)}<1. \]  This proves that
 that
 \[  T_{\epsilon,\alpha}  >\min(1, \frac {\epsilon^3 \alpha^4}{C_0  \|\vu_0\|_{L^p(\Phi_\gamma\, dx)}^2 (2+\alpha)^4}).\]
 
 If $T_{\epsilon,\alpha}$ is finite and $T<T_{\epsilon,\alpha}$, considering the initial value problem at initial time $T$, we find 
 \[  T_{\epsilon,\alpha}-T  >\min(1, \frac {\epsilon^3\alpha^4}{C_0  \|\vu_{\epsilon,\alpha}(T,.)\|_{L^p(\Phi_\gamma\, dx)}^2 (2+\alpha)^4}).\]
 Thus, $\lim_{T\rightarrow T_{\epsilon,\alpha}}  \|\vu_{\epsilon,\alpha}(T,.)\|_{L^p(\Phi_\gamma\, dx)}=+\infty$. We prove as well that $$\lim_{T\rightarrow T_{\epsilon,\alpha}}  \|\vu_{\epsilon,\alpha}(T,.)\|_{L^4(\Phi_2\, dx)}=+\infty.$$ If this was not the case, then the maximal existence time $\tilde T_{\epsilon,\alpha}$ for $\vu_{\epsilon,\alpha}$ in $\mathcal{C}([0,\tilde T_{\epsilon,\alpha}), L^2(\Phi_4\, dx)$ would satisfy $\tilde T_{\epsilon,\alpha}> T_{\epsilon,\alpha}$ and thus $\vu_{\epsilon,\alpha}$ would be bounded in $L^2(\Phi_4\, dx)$ on $[0,T_{\epsilon,\alpha})$.  For $0<T<t<T_{\epsilon,\alpha}$, we would have
 \begin{equation*}\begin{split} \|\vu_{\epsilon,\alpha}(t,.)\|_{L^p(\Phi_\gamma\, dx)}&\\ \leq \|e^{(t-T)\Delta}\vu_{\epsilon,\alpha}(T,.)\|_{L^p(\Phi_\gamma\, dx)}+&
\|  \int_T^t e^{(t-s)\Delta} \mathbb{P}\Div((\varphi_\epsilon*(\theta_\alpha \vu_{\epsilon,\alpha}))\otimes\vu_{\epsilon,\alpha})\, ds\|_{L^p(\Phi_\gamma\, dx)}\\ \leq C \max(1,\sqrt{T_{\epsilon,\alpha}-T})& \|\vu_{\epsilon,\alpha}(T,.)\|_{L^p(\Phi_\gamma\, dx)}\\+ C_2 (T_{\epsilon,\alpha}-T+\sqrt{ T_{\epsilon,\alpha}-T})  \frac 1{\epsilon ^{\frac 3  2}} (\frac{2+\alpha}{\alpha})^2&  \sup_{s\in [0,T_{\epsilon,\alpha}]} \| \vu_{\epsilon,\alpha}(s,.)\|_{L^2(\Phi_4\, dx)}\sup_{s\in [T,t]} \| \vu_{\epsilon,\alpha}(s,.)\|_{L^p(\Phi_\gamma\, dx)}.
 \end{split}\end{equation*}
 If $T$ is close enough to $T_{\epsilon,\alpha}$, so that  $$C_2 (T_{\epsilon,\alpha}-T+\sqrt{ T_{\epsilon,\alpha}-T})  \frac 1{\epsilon ^{\frac 3 2}} (\frac{2+\alpha}\alpha)^2  \sup_{s\in [0,T_{\epsilon,\alpha}]} \| \vu_{\epsilon,\alpha}(s,.)\|_{L^2(\Phi_4\, dx)}<\frac 1 2,$$
 we would get
 $$\sup_{T\leq t<T_{\epsilon,\alpha}}   \|\vu_{\epsilon,\alpha}(t,.)\|_{L^p(\Phi_\gamma\, dx)}\leq 2C \max(1,\sqrt{T_{\epsilon,\alpha}-T}) \|\vu_{\epsilon,\alpha}(T,.)\|_{L^p(\Phi_\gamma\, dx)}$$ in contradiction with  $\lim_{t\rightarrow T_{\epsilon,\alpha}}  \|\vu_{\epsilon,\alpha}(t,.)\|_{L^p(\Phi_\gamma\, dx)}=+\infty$.
\end{proof}

\subsection{Calder\'on's splitting.}
Now, for $2<p<+\infty$ and $0<\gamma<2$, we want to study the mollified equations when the initial data $\vu_0$ belongs to $L^p(\Phi_\gamma\, dx) \subset L^2(\Phi_4\, dx)$ with $\Div\vu_0=0$.
Following Calder\'on \cite{Cal90}, we will split the solution $\vu_{\epsilon,\alpha}$ as a sum $\vu_{\epsilon,\alpha}=\vv_{\eta,{\epsilon,\alpha}}+\vb_{\eta,{\epsilon,\alpha}}\in L^\infty((0,T_{(\eta)},  L^2(\Phi_2\, dx))+L^\infty((0,T_{(\eta)}), L^r)\subset  L^\infty((0,T_{(\eta)}), L^2(\Phi_4\, dx))$ for some $r\in (3,+\infty)$. The aim is to get a minoration of the existence time $T_{(\eta)}$ independent of $\epsilon$ and $\alpha$.
\begin{lemma}  \label{prop:split} Let $2< p<+\infty$ and $0<\gamma<2$. Let $r_0=2\frac{p-\gamma}{2-\gamma} $ and $\max(r_0,3)<r<+\infty$. Let  $\vu_0\in L^p(\mathbb{R}^3,\frac {dx}{(1+\vert x\vert)^2})$ with $\Div\vu_0=0$. Then, for every $\eta>0$ there exists $\vv_{0,\eta}$ and $\vb_{0,\eta}$ such that \[\vu_0=\vv_{0,\eta}+\vb_{0,\eta}\]  with \[ \vv_{0,\eta}\in L^2(\mathbb{R}^3,\frac {dx}{(1+\vert x\vert^2)}), \Div\vv_{0,\eta}=0  \] and 
  \[ \vb_{0,\eta}\in L^r(\mathbb{R}^3) ,  \Div\vb_{0,\eta}=0, \|\vb_{0,\eta}\|_r<\eta.\] 
\end{lemma}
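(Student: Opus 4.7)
The plan is a Calder\'on-type pointwise splitting, tuned to the weight, followed by the Leray projector to restore divergence-freeness. For a parameter $\lambda>0$ to be chosen small, I would introduce the $x$-dependent threshold
\[ \mu_\lambda(x)\ =\ \lambda\,\Phi(x)^{(2-\gamma)/(p-2)} \]
and set $\tilde\vv_{0,\lambda}=\vu_0\,\mathbf{1}_{\{|\vu_0|>\mu_\lambda\}}$ and $\tilde\vb_{0,\lambda}=\vu_0\,\mathbf{1}_{\{|\vu_0|\leq\mu_\lambda\}}$. The exponent $(2-\gamma)/(p-2)$ is forced: using the identity $r_0-p=\gamma(p-2)/(2-\gamma)$, a direct consequence of $r_0=2(p-\gamma)/(2-\gamma)$, one gets the two pointwise identities
\[ \mu_\lambda^{2-p}\,\Phi^2\ =\ \lambda^{2-p}\,\Phi^\gamma,\qquad \mu_\lambda^{r_0-p}\ =\ \lambda^{r_0-p}\,\Phi^\gamma, \]
which are precisely what is needed to convert the pointwise bounds on $|\vu_0|$ into $L^p(\Phi_\gamma\,dx)$ control on both sides of the threshold.

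Next I would carry out the two pointwise comparisons. Since $p>2$, on $\{|\vu_0|>\mu_\lambda\}$ the estimate $|\vu_0|^2\Phi^2\leq \mu_\lambda^{2-p}|\vu_0|^p\Phi^2=\lambda^{2-p}|\vu_0|^p\Phi^\gamma$ yields $\|\tilde\vv_{0,\lambda}\|_{L^2(\Phi_2\,dx)}^2\leq\lambda^{2-p}\|\vu_0\|_{L^p(\Phi_\gamma\,dx)}^p$. Since $r_0>p$, on $\{|\vu_0|\leq\mu_\lambda\}$ the estimate $|\vu_0|^{r_0}\leq\mu_\lambda^{r_0-p}|\vu_0|^p=\lambda^{r_0-p}|\vu_0|^p\Phi^\gamma$ yields $\|\tilde\vb_{0,\lambda}\|_{r_0}^{r_0}\leq\lambda^{r_0-p}\|\vu_0\|_{L^p(\Phi_\gamma\,dx)}^p$. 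Moreover $\|\tilde\vb_{0,\lambda}\|_\infty\leq\lambda$, since $\mu_\lambda\leq\lambda$ (as $\Phi\leq 1$). Interpolating $L^r$ between $L^{r_0}$ and $L^\infty$ for $r\in(r_0,+\infty)$ then gives
\[ \|\tilde\vb_{0,\lambda}\|_r\ \leq\ \|\tilde\vb_{0,\lambda}\|_{r_0}^{r_0/r}\,\|\tilde\vb_{0,\lambda}\|_\infty^{(r-r_0)/r}\ \leq\ \lambda^{\,1-p/r}\,\|\vu_0\|_{L^p(\Phi_\gamma\,dx)}^{p/r}, \]
which tends to $0$ as $\lambda\to 0^+$ because $r>p$.

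Finally, to restore divergence-freeness, I would set $\vv_{0,\eta}=\mathbb{P}\tilde\vv_{0,\lambda}$ and $\vb_{0,\eta}=\mathbb{P}\tilde\vb_{0,\lambda}$; since $\vu_0$ is already divergence-free, $\vu_0=\mathbb{P}\vu_0=\vv_{0,\eta}+\vb_{0,\eta}$ and both pieces are divergence-free. The boundedness $\mathbb{P}:L^r\to L^r$ (Calder\'on--Zygmund, $1<r<+\infty$) preserves the $L^r$ estimate on $\tilde\vb_{0,\lambda}$, so for $\lambda$ sufficiently small we have $\|\vb_{0,\eta}\|_r<\eta$. The one nontrivial analytic input I expect to be the main obstacle is the boundedness of $\mathbb{P}$ on the weighted space $L^2(\Phi_2\,dx)$: this should follow from the fact that $\Phi^2\sim(1+|x|^2)^{-1}$ is a Muckenhoupt $A_2$ weight on $\mathbb{R}^3$, so Riesz transforms (and hence $\mathbb{P}$) act boundedly there. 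Granting this, the argument closes, and the pointwise splitting itself is entirely dictated by the balancing exponent $(2-\gamma)/(p-2)$.
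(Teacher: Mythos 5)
Your proof is correct, but it takes a genuinely different route from the paper's. The paper obtains the splitting abstractly: it invokes the interpolation identity $L^p(\Phi_\gamma\,dx)=[L^2(\Phi_2\,dx),L^{r_0}]_{[1-\gamma/p]}$ to write $\vu_0=\vu_1+\vu_2$ with $\vu_1\in L^2(\Phi_2\,dx)$, $\vu_2\in L^{r_0}$, then uses the embedding $L^{r_0}\subset[L^2,L^r]_{\delta,\infty}$ (equivalently, a $K$-functional splitting) to decompose $\vu_2=\vv_A+\vb_A$ with $\|\vb_A\|_r\leq CA^{\delta-1}\|\vu_2\|_{r_0}$, $\delta<1$, and lets $A\to+\infty$; the final step --- applying $\mathbb{P}$ and using that $\Phi_2\in\mathcal{A}_2$ and $\Phi_\gamma\in\mathcal{A}_p$ --- is identical to yours. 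Your version replaces the two-stage abstract interpolation by a single concrete pointwise truncation at the weight-adapted height $\mu_\lambda(x)=\lambda\,\Phi(x)^{(2-\gamma)/(p-2)}$, and your exponent bookkeeping ($r_0-p=\gamma(p-2)/(2-\gamma)$, hence $\mu_\lambda^{2-p}\Phi^2=\lambda^{2-p}\Phi^\gamma$ and $\mu_\lambda^{r_0-p}=\lambda^{r_0-p}\Phi^\gamma$, with $2<p<r_0<r$ guaranteeing the right sign in each inequality and $\lambda^{1-p/r}\to 0$) checks out. What your approach buys is self-containedness: it proves, rather than cites, the inclusion $L^p(\Phi_\gamma\,dx)\subset L^2(\Phi_2\,dx)+L^r$ with a quantitative smallness of the $L^r$ piece, and it makes visible why the threshold exponent is forced. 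What the paper's approach buys is brevity and the ability to reuse standard interpolation machinery. One minor point common to both arguments that you might make explicit: the identity $\vu_0=\mathbb{P}\vu_0$ for a divergence-free $\vu_0\in L^p(\Phi_\gamma\,dx)$, and the consistency of $\mathbb{P}$ on the three spaces involved, rest on the $\mathcal{A}_p$/$\mathcal{A}_2$ boundedness of the Riesz transforms together with a density argument; the paper asserts this in the same breath as you do.
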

\begin{proof} We have the interpolation result $L^p(\Phi_\gamma\, dx) =[L^2(\Phi_2\, dx), L^{r_0}]_{[1-\frac \gamma p]}$, so that $L^p(\Phi_\gamma\, dx) \subset L^2(\Phi_2\, dx)+ L^{r_0}$. Moreover, $$L^{r_0}=[L^2,L^r]_{[\frac{\frac1 2-\frac 1 {r_0}}{\frac 1 2-\frac 1 r}]} \subset [L^2,L^r]_{\frac{\frac1 2-\frac 1 {r_0}}{\frac 1 2-\frac 1 r},\infty}.$$
Let $\delta=\frac{\frac1 2-\frac 1 {r_0}}{\frac 1 2-\frac 1 r}$.  We may split $\vu_0$ in $\vu_0=\vu_1+\vu_2$ with $\vu_1\in L^2(\Phi_2\, dx)$ and $\vu_2\in L^{r_0}$ and, for every $A>0$, we may split $\vu_2 $ into $\vv_A+\vb_A$ with
\begin{equation*} \|\vv_A\|_{L^2(\Phi_2\, dx)}\leq C A^{\delta} \|\vu_2\|_{r_0}
\text{ and }  \|\vb_A\|_{r}\leq C A^{\delta-1} \|\vu_2\|_{r_0} .\end{equation*} Moreover, as $\Phi_\gamma$ is a Muckenhoupt weight in the class $\mathcal{A}_p$ and  $\Phi_2$ is a Muckenhoupt weight in the class $\mathcal{A}_2$, we may apply the Leray projection operator and write
$$ \vu_0=\mathbb{P}(\vu_0)=\mathbb{P}(\vu_1+\vv_A)+\mathbb{P}(\vb_A)$$  with \[ \mathbb{P}(\vu_1+\vv_A)\in L^2(\mathbb{R}^3,\frac {dx}{(1+\vert x\vert^2)}), \Div \mathbb{P}(\vu_1+\vv_A)=0  \] and 
  \[\mathbb{P}(\vb_A)\in L^r(\mathbb{R}^3) ,  \Div \mathbb{P}(\vb_A)=0,\| \mathbb{P}(\vb_A)\|_r< C A^{\delta-1} \|\vu_2\|_{r_0}.\] 
We conclude by taking $A$ large enough.
\end{proof}

For some $T$, we  then split the solution $\vu_{\epsilon,\alpha}$ (defined in $\mathcal{C}([0,T_{\epsilon,\alpha}), L^2(\Phi_4\, dx)))$ in $\vu_{\epsilon,\alpha}=\vv_{\eta,{\epsilon,\alpha}}+\vb_{\eta,{\epsilon,\alpha}}$ where $\vb_{\eta,{\epsilon,\alpha}} $ is a solution in $L^\infty((0,T), L^r)$ of $$\vb_{\eta,{\epsilon,\alpha}}=e^{t\Delta}\vb_{0,\eta}-B_{\epsilon,\alpha}(\vb_{\eta,{\epsilon,\alpha}},\vb_{\eta,{\epsilon,\alpha}})
$$ and $\vv_{\eta,{\epsilon,\alpha}}$ is a solution in  $\displaystyle \bigcap_{S<T_{\epsilon,\alpha}} L^\infty((0,\min(S,T), L^2(\Phi_4\, dx))$ of $$\vv_{\eta,{\epsilon,\alpha}}=e^{t\Delta}\vv_{0,\eta}-B_{\epsilon,\alpha}(\vb_{\eta,{\epsilon,\alpha}},\vv_{\eta,{\epsilon,\alpha}})-B_{\epsilon,\alpha}(\vv_{\eta,{\epsilon,\alpha}},\vb_{\eta,{\epsilon,\alpha}})-B_{\epsilon,\alpha}(\vv_{\eta,{\epsilon,\alpha}},\vv_{\eta,{\epsilon,\alpha}}).
$$

\subsection{Local estimates in the norm of $L^r$.}

We write $W_t$ for the heat kernel $W_t(x)=\frac 1{(4\pi t)^{\frac 3 2}} e^{-\frac{\vert x\vert^2}{4t}}$.
\begin{proposition} \label{prop:mollif2}  Let $r>3$. There exists a constant $C_1>0$ such that, for every $\epsilon>0$,  $\alpha>0$, $\eta>0$ and every $\vb_{0,\eta}\in L^r$ with $\|\vb_{0,\eta}\|_r\leq \eta$,  the mollified equations  
  \begin{equation}\label{NSEq8} \left\{ \begin{split} \partial_t\vb_{\eta,{\epsilon,\alpha}}=&\Delta\vb_{\eta,{\epsilon,\alpha}} -\mathbb{P}\Div((\varphi_\epsilon*(\theta_\alpha\vb_{\eta,{\epsilon,\alpha}}))\otimes  \vb_{\eta,{\epsilon,\alpha}})
 \\ \lim_{t\rightarrow 0}  \vb_{\eta,{\epsilon,\alpha}}(t,.)=&\vb_{0,\eta}\end{split}\right.\end{equation} have a unique  solution  on $(0,T_{[\eta]})\times \mathbb{R}^3$ (with $T_{[\eta]}^{\frac 1 2-\frac 3{2r}}= \frac 1{C_1 \eta}$) such that 
 \begin{itemize}\item $\vb_{\eta,{\epsilon,\alpha}}\in\mathcal{C}([0, T_{[\eta]}], L^2(\Phi_4\, dx))$,
 \item $\sup_{0\leq t\leq T_{[\eta]}} \|\vb_{\eta,{\epsilon,\alpha}}(t,.)\|_r\leq 2 \eta$,
 \item $\sup_{0\leq t\leq T_{[\eta]}}  t^{\frac 1 2}\|\vN\otimes \vb_{\eta,{\epsilon,\alpha}}(t,.)\|_r\leq  2  \|\vN W_1\|_1 \eta$,
 \item $\sup_{0< t\leq T_{[\eta]}} t^{ \frac 3 {2r}}\| \vb_{\eta,{\epsilon,\alpha}}(t,.)\|_\infty\leq 2  \|  W_1\|_{\frac r{r-1}} \eta$.
 \end{itemize}
\end{proposition}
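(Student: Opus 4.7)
The plan is to solve \eqref{NSEq8} by a Kato-type Picard iteration in a time-weighted norm that encodes simultaneously the three conclusions on $L^r$, $\vN\otimes\vb$ in $L^r$, and $L^\infty$. Define, for $T>0$,
$$ \|\vb\|_{X_T} = \sup_{0<t\leq T} \|\vb(t,\cdot)\|_r + \sup_{0<t\leq T} t^{\frac12} \|\vN\otimes \vb(t,\cdot)\|_r + \sup_{0<t\leq T} t^{\frac{3}{2r}} \|\vb(t,\cdot)\|_\infty,$$
and look for a fixed point of $\vb\mapsto e^{t\Delta}\vb_{0,\eta}-B_{\epsilon,\alpha}(\vb,\vb)$ in the ball $\{\|\vb\|_{X_T}\leq 2C_0\eta\}$. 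The linear part is standard: by convolution with $W_t$ and Young's inequality, $\|e^{t\Delta}\vb_{0,\eta}\|_{X_T}\leq C_0\eta$, with the sharp constants $\|W_1\|_{\frac{r}{r-1}}$ and $\|\vN W_1\|_1$ already appearing in the stated bounds.

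The crucial observation for the bilinear part is that the mollifier is a contraction on every $L^p$: since $\|\theta_\alpha\|_\infty\leq 1$ and $\|\varphi_\epsilon\|_1=1$, one has $\|\varphi_\epsilon\ast(\theta_\alpha \vu)\|_p\leq\|\vu\|_p$. Consequently the mollified bilinear term enjoys the same estimates as the unmollified one, uniformly in $\epsilon,\alpha$. Using the Oseen bound $|e^{(t-s)\Delta}\mathbb{P}\Div\,\mathbb{F}|\leq C\int (\sqrt{t-s}+|x-y|)^{-4}|\mathbb{F}(s,y)|\,dy$ from Theorem \ref{theo:stokes2}, together with the Hölder bound $\|\vu\otimes\vv\|_r\leq \|\vu\|_\infty\|\vv\|_r$, the $L^r$ and $L^\infty$ estimates reduce to the Beta integral $\int_0^t (t-s)^{-\frac12-\frac{3}{2r}}s^{-\frac{3}{2r}}\,ds\leq C\,t^{\frac12-\frac{3}{2r}}$. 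For the gradient estimate one splits $\int_0^t=\int_0^{t/2}+\int_{t/2}^t$: on $[0,t/2]$ the kernel $(\vN\otimes\vN\otimes\vN)W_{t-s}$ is pointwise dominated by $C(t-s)^{-2}$, treated with the $L^\infty\times L^r$ Hölder; on $[t/2,t]$ one keeps only one derivative inside $\mathbb{P}\Div$ and places the other on one factor, using the $t^{1/2}$-weighted $\vN\vb$ bound. All three norms of $B_{\epsilon,\alpha}(\vu,\vv)$ are thus bounded by $C_1' T^{\frac12-\frac{3}{2r}}\|\vu\|_{X_T}\|\vv\|_{X_T}$, where $\frac12-\frac{3}{2r}>0$ uses $r>3$.

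Choosing $T_{[\eta]}$ by $T_{[\eta]}^{\frac12-\frac{3}{2r}}=(C_1\eta)^{-1}$ for an appropriate absolute constant $C_1$, the Picard map becomes a $\frac12$-contraction on the ball of radius $2C_0\eta$, yielding a unique fixed point $\vb_{\eta,\epsilon,\alpha}\in X_{T_{[\eta]}}$ satisfying the three quantitative bounds in the conclusion (with a routine sharpening of constants to match the stated $2\|W_1\|_{r/(r-1)}\eta$ and $2\|\vN W_1\|_1\eta$). For the $L^2(\Phi_4\,dx)$ statement, note that $\Phi_4\in L^1(\mathbb{R}^3)$ and $r\geq 2$ give $L^r\subset L^2(\Phi_4\,dx)$ by Hölder, so $\vb_{\eta,\epsilon,\alpha}\in L^\infty((0,T_{[\eta]}),L^2(\Phi_4\,dx))$; continuity in $L^2(\Phi_4\,dx)$ and identification with the solution of Proposition \ref{prop:mollif} (applied with $p=2$, $\gamma=4$) follow from the uniqueness there applied to both mild solutions, which live in $\mathcal{C}([0,T_{[\eta]}],L^2(\Phi_4\,dx))$ after the standard density argument.

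The main obstacle is the gradient estimate with the correct $T^{\frac12-\frac{3}{2r}}$-dependence: direct differentiation under the time integral produces a non-integrable $(t-s)^{-1}$ singularity, forcing the two-piece split of the time integral described above and a careful check that the mollification does not degrade the Oseen kernel bound — which it does not, precisely because it is a bounded convolution on every $L^p$ with norm at most $1$.
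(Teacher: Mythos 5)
Your overall strategy coincides with the paper's: a Picard iteration for $\vb\mapsto e^{t\Delta}\vb_{0,\eta}-B_{\epsilon,\alpha}(\vb,\vb)$ in the time-weighted space carrying the three norms $\sup\|\cdot\|_r$, $\sup\sqrt t\,\|\vN\otimes\cdot\|_r$, $\sup t^{3/(2r)}\|\cdot\|_\infty$, with the linear part handled by Young's inequality and the bilinear $L^r$ and $L^\infty$ bounds by the Oseen kernel and a Beta integral. The embedding $L^r\subset L^2(\Phi_4\,dx)$ for the last conclusion is also as in the paper.

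There is, however, a genuine gap in your treatment of the gradient estimate, and it sits exactly at the point you single out as unproblematic. When you apply Leibniz to $\partial_j\bigl((\varphi_\epsilon*(\theta_\alpha\vv))\otimes\vw\bigr)$ on $[t/2,t]$, the derivative falling on the first factor gives $\varphi_\epsilon*\bigl(\partial_j(\theta_\alpha\vv)\bigr)=\varphi_\epsilon*(\theta_\alpha\,\partial_j\vv)+\varphi_\epsilon*\bigl((\partial_j\theta_\alpha)\,\vv\bigr)$. Only the first piece is controlled by the ``$t^{1/2}$-weighted $\vN\vb$ bound''; the commutator piece involves $\vN\theta_\alpha$, and your key observation (that $f\mapsto\varphi_\epsilon*(\theta_\alpha f)$ has norm $\leq 1$ on every $L^p$) says nothing about it, because that operator does not commute with $\vN$. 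Moreover $\|\vN\theta_\alpha\|_\infty=\alpha\|\vN\theta\|_\infty$, so estimating this term in $L^\infty$ destroys the uniformity in $\alpha>0$ that the proposition requires. The paper's proof handles it by exploiting the scale invariance $\|\partial_j\theta_\alpha\|_3=\|\partial_j\theta\|_3$, pairing $\|\varphi_\epsilon*(\partial_j\theta_\alpha\,\vv)\|_3\,\|\vw\|_\infty$ with the kernel bound $\|e^{(t-s)\Delta}\mathbb{P}\Div\,\mathbb{G}\|_r\leq C(t-s)^{-1+\frac{3}{2r}}\|\mathbb{G}\|_3$, which yields an integrable singularity and the correct power $T^{\frac12-\frac{3}{2r}}$; some such scale-invariant argument must be supplied. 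Two smaller remarks: the paper never differentiates the kernel at all (Leibniz is applied on the whole interval $(0,t)$), so your $[0,t/2]$/$[t/2,t]$ split is unnecessary --- and on $[0,t/2]$ the relevant bound is the $L^1$ norm $\sim(t-s)^{-1}$ of the kernel of $\vN e^{(t-s)\Delta}\mathbb{P}\Div$, not a pointwise bound $C(t-s)^{-2}$, which by itself does not control a convolution.
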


\begin{proof} First, we remark that $L^r(\mathbb{R}^3)\subset L^2(\Phi_4\, dx)$. We have the obvious results for the heat kernel operating on $L^\infty$:
\[ \|e^{t\Delta}f\|_{L^r(dx)} =\|W_t*f\|_r\leq \|W_t\|_1\|f\|_r=\|f\|_r,\] 
\[ \|\vN e^{t\Delta}f\|_{L^r(dx)} =\|\vN W_t*f\|_r\leq \|\vN W_t\|_1\|f\|_r=\frac 1{\sqrt t}\|\vN W_1\|_1\|f\|_r\] and
\[ \|  e^{t\Delta}f\|_{L^\infty(dx)} =\|  W_t*f\|_\infty\leq \|  W_t\|_{\frac r{r-1}}\|f\|_r=\frac 1{  t^{\frac 3{2r}}}\|  W_1\|_{\frac r{r-1}}\|f\|_r.
\]

We consider the fixed point problem  
\[ \vb=e^{t\Delta}\vb_{0,\eta}-B_{\epsilon,\alpha}(\vb,\vb).\]  We want to prove that the map $\vb\mapsto e^{t\Delta}\vb_{0,\eta}-B_{\epsilon,\alpha}(\vb,\vb)$ is a contraction in \begin{equation*}\begin{split}B_\eta=\{ \vb\in \mathcal{C}([0,T], L^2(\Phi_4\, dx))\ /\ &\sup_{t\in [0,T]} \| \vb(t,.)\|_r\leq 2\eta,\\\ & \sup_{0<t\leq T} \sqrt t \|\vN\otimes \vb(t,.)\|_r\leq 2 \|W_1\|_1 \eta,\\& \sup_{0<t\leq T}  t^{\frac 3{2r}}   \|  \vb(t,.)\|_\infty\leq 2 \|W_1\|_{\frac r{r-1}} \eta
\}.\end{split}\end{equation*}
Let $\vv$, $\vw\in  \mathcal{C}([0,T], L^2(\Phi_4\, dx))$ with 
\begin{itemize}
\item[$\bullet$]  $\sup_{0<t<T} \|\vv(t,.)\|_r<+\infty$, \item[$\bullet$] $\sup_{0<t<T} \|\vw(t,.)\|_r<+\infty$,
\item[$\bullet$]  $\sup_{0<t<T} t^{\frac 3{2r}}  \|\vv(t,.)\|_\infty<+\infty$, \item[$\bullet$] $\sup_{0<t<T}t^{\frac 3{2r}}   \|\vw(t,.)\|_\infty<+\infty$,
\item[$\bullet$] $\sup_{0<t<T} \sqrt t \|\vN\otimes\vv(t,.)\|_r<+\infty$,  
\item[$\bullet$] $\sup_{0<t<T} \sqrt t \|\vN\otimes\vw(t,.)\|_r<+\infty$. 
\end{itemize}

We have the inequalities
 \begin{equation*}\begin{split}\|B_{\epsilon,\alpha}(\vv,\vw)\|_r\leq & C \int_0^t \frac 1{\sqrt{t-s}} \|\varphi_\epsilon*(\theta_\alpha \vv(s,.))\|_r  \|\vw(s,.)\|_\infty\, ds
 \\\leq& C \int_0^t \frac 1{\sqrt{t-s}}\frac 1 {s^{\frac 3 {2r}}}\, ds \sup_{0<t<T} \|\vv(s,.)\|_r \sup_{0<s<T}s^{\frac 3{2r}}   \|\vw(s,.)\|_\infty\\ \leq &C' t^{\frac 1 2(1-\frac 3 r)} \sup_{0<t<T} \|\vv(s,.)\|_r \sup_{0<s<T}s^{\frac 3{2r}}   \|\vw(s,.)\|_\infty,
  \end{split}\end{equation*}
 \begin{equation*}\begin{split} \|B_{\epsilon,\alpha}(\vv,\vw)\|_\infty\leq & C \int_0^t \frac 1{\sqrt{t-s}} \|\varphi_\epsilon*(\theta_\alpha \vv(s,.))\|_\infty \|\vw(s,.)\|_\infty\, ds
\\\leq& C \int_0^t \frac 1{\sqrt{t-s}}\frac 1 {s^{\frac 3 {r}}}\, ds \sup_{0<t<T} s^{\frac 3{2r}} \|\vv(s,.)\|_\infty \sup_{0<s<T}s^{\frac 3{2r}}   \|\vw(s,.)\|_\infty\\ \leq &C't^{-\frac 3{2r}} t^{\frac 1 2(1-\frac 3 r)} \sup_{0<t<T} s^{\frac 3{2r}} \|\vv(s,.)\|_\infty \sup_{0<s<T}s^{\frac 3{2r}}   \|\vw(s,.)\|_\infty,
  \end{split}\end{equation*} and, for $1\leq j\leq 3$, remarking that $\|\partial_j\theta_\alpha\|_3=\|\partial_j\theta\|_3$,
 \begin{equation*}\begin{split}\|\partial_j B_{\epsilon,\alpha}(\vv,\vw)\|_r\leq & C \int_0^t \frac 1{\sqrt{t-s}} \|\varphi_\epsilon*(\theta_\alpha \vv(s,.))\|_\infty\| \partial_j\vw(s,.)\|_r\, ds\\&+ C\int_0^t \frac 1{\sqrt{t-s}} \| \varphi_\epsilon*(\theta_\alpha \partial_j\vv(s,.))\|_r \| \vw(s,.)\|_\infty\, ds\\&+ C\int_0^t \frac 1{(t-s)^{1-\frac 3{2r}}}   \| \varphi_\epsilon* (\partial_j\theta_\alpha\, \vv(s,.))\|_3 \| \vw(s,.)\|_\infty\, ds
 \\ \leq C' t^{-\frac  3 {2r}}& (\sup_{0<t<T} s^{\frac 3{2r}} \|\vv(s,.)\|_\infty+\sup_{0<s<T}\sqrt s\|\vN\otimes\vv(s,.)\|_r) \\& (\sup_{0<s<T}s^{\frac 3{2r}}   \|\vw(s,.)\|_\infty+\sup_{0<s<T}\sqrt s\|\vN\otimes \vw(s,.)\|_r).
  \end{split}\end{equation*}
   
For the norm
$$ \|\vv \|_*=\max( \sup_{0<s<T} \|\vv(s,.)\|_r ,\sup_{0<s<T} \sqrt s \|\vN\otimes\vv(s,.)\|_r, \sup_{0<s<T}s^{\frac 3{2r}} \|\vv(s,.)\|_\infty),$$ we find
$$ \|B_{\epsilon,\alpha}(\vv,\vw)\|_* \leq C   T^{\frac 1 2-\frac 3{2r}} \|\vv\|_* \|\vw\|_*$$ (where $C$ doesn't depend on $\epsilon$ nor on $\alpha$),  which proves that $B_{\epsilon,\alpha}$ is a contraction on $B_\eta$  for $T^{\frac 1 2-\frac 3{2r}}\eta$ small enough.
\end{proof}

\subsection{Local estimates in the norm of $L^2(\Phi_2\, dx)$.}
We now want to estimate $\vv_{\eta, {\epsilon,\alpha}}(t,.)$ in  $L^2(\Phi_2\, dx)$ for $\leq t\leq T<\min(T_{\epsilon,\alpha}, T_{[\eta]})$. We have
\[ \vv_{\eta,{\epsilon,\alpha}}(t,.)=e^{t\Delta}\vv_{0,\eta} -\mathcal{L}(\mathbb{F}_{\eta,{\epsilon,\alpha}})\] where we defined  the operator $\mathcal{L}$  by
$$ \mathbb{F}\mapsto \mathcal{L}(\mathbb{F})= \int_0^t e^{(t-s)\Delta}\mathbb{P}\Div\mathbb{F}\, ds$$ and where
$$ \mathbb{F}_{\eta,{\epsilon,\alpha}}= (\varphi_\epsilon*(\theta_\alpha\vu_{\epsilon,\alpha}))\otimes\vu_{\epsilon,\alpha} -(\varphi_\epsilon*(\theta_\alpha\vb_{\eta,{\epsilon,\alpha}}))\otimes  \vb_{\eta,{\epsilon,\alpha}}. $$

\begin{lemma}\label{le:weight2}  $\vv_{\eta, {\epsilon,\alpha}}\in \mathcal{C}([0,T], L^2(\Phi_2\, dx))\cap L^2((0,T), H^1(\Phi_2\, dx))$ and  $\partial_t \vv_{\eta, {\epsilon,\alpha}}\in  L^2((0,T), H^{-1}(\Phi_2\, dx))$. In particular, we have
\begin{equation*} \begin{split}\int \vert \vv_{\eta, {\epsilon,\alpha}}(t,x)\vert^2 \Phi_2(x)\, dx-&\int \vert \vv_{0,\eta}(x)\vert^2 \Phi_2(x)\, dx \\ =& 2\int_0^t \int  \vv_{\eta, {\epsilon,\alpha}}(s,x)\cdot \partial_t \vv_{\eta, {\epsilon,\alpha}}(s,x) \ \Phi_2(x)\, ds\, dx \\ = 2\int_0^t \int  \vv_{\eta, {\epsilon,\alpha}}(s,x)\cdot&(\Delta\vv_{\eta, {\epsilon,\alpha}}(s,x) -\mathbb{P}\Div \mathbb{F}_{\eta,{\epsilon,\alpha}}(s,x))\ \Phi_2(x)\, ds\, dx.
\end{split}\end{equation*}\end{lemma}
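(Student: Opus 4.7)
The plan is to establish in turn the three regularity statements $\vv_{\eta,\epsilon,\alpha}\in L^\infty((0,T), L^2(\Phi_2\, dx))$, $\vv_{\eta,\epsilon,\alpha}\in L^2((0,T), H^1(\Phi_2\, dx))$ and $\partial_t \vv_{\eta,\epsilon,\alpha}\in L^2((0,T), H^{-1}(\Phi_2\, dx))$, and then to invoke a Lions--Magenes type identity in the Gelfand triple $H^1(\Phi_2\, dx)\hookrightarrow L^2(\Phi_2\, dx)\hookrightarrow H^{-1}(\Phi_2\, dx)$ to deduce at once continuity in time and the integrated energy identity.

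The key bookkeeping is the decomposition of the forcing. Writing $\vu_{\epsilon,\alpha}=\vv+\vb$ (dropping the fixed indices), we have
\[\mathbb{F}_{\eta,\epsilon,\alpha} = (\varphi_\epsilon*(\theta_\alpha\vv))\otimes\vv+(\varphi_\epsilon*(\theta_\alpha\vv))\otimes\vb+(\varphi_\epsilon*(\theta_\alpha\vb))\otimes\vv.\]
By Lemma \ref{le:troncatur} each mollified factor lies in $L^\infty(\mathbb{R}^3)$ with norm controlled by the weighted $L^p$-norm of its argument. Two of the three tensors are therefore bounded functions (in $x$) multiplied by $\vv$, and so belong to $L^2(\Phi_2\, dx)$ whenever $\vv$ does. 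The remaining cross term $(\varphi_\epsilon*(\theta_\alpha\vv))\otimes\vb$ is instead a bounded function times $\vb$; here we use the a priori bound $\|\vb(s,\cdot)\|_\infty\lesssim \eta s^{-3/(2r)}$ given by Proposition \ref{prop:mollif2}, so that $\|\mathbb{F}(s,\cdot)\|_{L^2(\Phi_2\, dx)}$ is bounded by an $L^q((0,T),ds)$ function (any $q<2r/3$) times $\|\vv(s,\cdot)\|_{L^2(\Phi_2\, dx)}$. Since $r>3$, this yields $\mathbb{F}_{\eta,\epsilon,\alpha}\in L^2((0,T), L^2(\Phi_2\, dx))$ as soon as $\vv\in L^\infty((0,T), L^2(\Phi_2\, dx))$.

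The latter bound is obtained from the mild formula $\vv=e^{t\Delta}\vv_{0,\eta}-\mathcal{L}(\mathbb{F})$ by Lemmas \ref{le:heat} and \ref{le:heat2} applied with $p=\gamma=2$ together with a Gronwall argument on a short initial interval, which then propagates by continuation. The $L^2 H^1(\Phi_2)$ bound comes next from the weighted energy inequality: pairing the equation with $\vv\Phi_2$ and integrating by parts yields $\|\vN\vv\|_{L^2(\Phi_2)}^2$ as the principal dissipative term, the commutators against $\Phi_2$ are lower order because $|\vN\Phi_2|+|\Delta\Phi_2|\lesssim \Phi_2$, and the forcing contribution is absorbed via $\mathbb{F}\in L^2 L^2(\Phi_2)$ and the boundedness of Riesz transforms on $L^2(\Phi_2\, dx)$ (since $\Phi_2$ belongs to the Muckenhoupt class $\mathcal{A}_2$). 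The bound $\partial_t\vv\in L^2((0,T), H^{-1}(\Phi_2\, dx))$ then follows directly from the equation $\partial_t\vv=\Delta\vv-\mathbb{P}\Div \mathbb{F}$ by Lemma \ref{le:diff}.

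Finally, by the very definition of the weighted Sobolev spaces in Section \ref{sec:weighted}, the map $u\mapsto \Phi u$ is an isometric bijection from $L^2(\Phi_2\, dx)$ onto $L^2(dx)$ and from $H^1(\Phi_2\, dx)$ onto $H^1(dx)$, carrying the above Gelfand triple onto the standard $H^1\hookrightarrow L^2\hookrightarrow H^{-1}$; Lions--Magenes then applies and yields both $\vv\in\mathcal{C}([0,T], L^2(\Phi_2\, dx))$ and the claimed energy identity. The main obstacle in this scheme is controlling the mixed $\vv\otimes\vb$ term in $\mathbb{F}$ despite $\vb$ belonging only to $L^r$ (which need not embed into $L^2(\Phi_2\, dx)$ when $r\geq 6$); this is precisely what the pointwise in time $L^\infty$ bound of Proposition \ref{prop:mollif2} addresses, and it is where the condition $r>3$ enters decisively.
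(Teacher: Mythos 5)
Your overall architecture --- establish $\vv_{\eta,\epsilon,\alpha}\in L^2((0,T),H^1(\Phi_2\,dx))$ and $\partial_t\vv_{\eta,\epsilon,\alpha}\in L^2((0,T),H^{-1}(\Phi_2\,dx))$, transport the Gelfand triple to the unweighted one via $u\mapsto\Phi u$, and conclude by Lions--Magenes --- is exactly the paper's endgame, and your identification of the cross term $(\varphi_\epsilon*(\theta_\alpha\vv))\otimes\vb$ as the delicate piece, handled through $\|\vb(s,\cdot)\|_\infty\lesssim\eta s^{-3/(2r)}$ and $r>3$, is correct. But there is a genuine gap in the middle of your scheme: you obtain the $L^2H^1(\Phi_2\,dx)$ bound ``from the weighted energy inequality, pairing the equation with $\vv\Phi_2$ and integrating by parts.'' That pairing --- in particular the identity $\int_0^t\int\partial_t\vv\cdot\vv\,\Phi_2\,dx\,ds=\tfrac12(\|\vv(t)\|_{L^2(\Phi_2\,dx)}^2-\|\vv(0)\|_{L^2(\Phi_2\,dx)}^2)$ --- is only licensed by Lions--Magenes \emph{after} you know $\Phi\vv\in L^2H^1$ and $\partial_t(\Phi\vv)\in L^2H^{-1}$; it is precisely the conclusion of the lemma. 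As written, step 3 assumes what it proves. A related soft spot is your first step: the Gronwall argument bounds $\|\vv(s,\cdot)\|_{L^2(\Phi_2\,dx)}$, a quantity not yet known to be finite (a priori $\vv$ only lives in $L^2(\Phi_4\,dx)$), so it must be replaced by a fixed-point or continuation argument.

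The paper closes both gaps by a different and more elementary route that you should note. First, every term of $\mathbb{F}_{\eta,\epsilon,\alpha}$ carries a factor $\varphi_\epsilon*(\theta_\alpha\,\cdot)$, hence is supported in $\{|x|\le \tfrac2\alpha+\epsilon\}$; combined with the $L^\infty$ bound of Lemma \ref{le:troncatur} this puts $\mathbb{F}_{\eta,\epsilon,\alpha}$ in \emph{unweighted} $L^2((0,T),L^2)$, so the classical (unweighted) result gives $\mathcal{L}(\mathbb{F}_{\eta,\epsilon,\alpha})\in\mathcal{C}([0,T],L^2)\cap L^2((0,T),H^1)$ directly, with no weighted maximal regularity and no energy inequality needed. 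Second, for the free evolution $e^{t\Delta}\vv_{0,\eta}$ the weighted bounds are proved first for $\vw_0\in L^2$ (where everything is classical) and then extended by density of $L^2$ in $L^2(\Phi_2\,dx)$, using $|\Delta\Phi_2|\le C\Phi_2$ to control the commutator. Only once both pieces are known to lie in $L^2H^1(\Phi_2\,dx)$ with time derivative in $L^2H^{-1}(\Phi_2\,dx)$ is Lions--Magenes invoked to produce the continuity and the energy identity. Your three-term splitting of $\mathbb{F}$ and the $t^{-3/(2r)}$ estimate are not wrong --- they reappear in the paper's Proposition \ref{prop:mollif3}, where the energy identity (now legitimate) is exploited quantitatively --- but for the present lemma they are neither necessary nor sufficient on their own; the compact-support observation is the missing idea that makes the regularity claims non-circular.
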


\begin{proof} First, we recall that $\|f\|_{H^s(\Phi_2\, dx)}=\|\Phi f\|_{H^s}$. As $\Phi$ is bounded with all its derivatives, we have $\|\Phi f\|_{H^{s}}\leq C_s \|f\|_{H^s}$ for every $s\in\mathbb{R}$.   As $\Phi_2$ is a weight in the Muckenhoupt class $\mathcal{A}_2$ we have 
\[ \|e^{t\Delta}f\|_{L^2(\Phi_2\,  dx)}\leq \|\mathcal{M}_f\|_{L^2(\Phi_2\,  dx)}\leq C \|f\|_{L^2(\Phi_2\,  dx)}\] where $\mathcal{M}_f$ is the Hardy\ddh Littlewood maximal function of $f$.

  If $\vw_0\in L^2$, we have $e^{t\Delta}\vw_0\in  \mathcal{C}([0,T], L^2)\cap L^2((0,T), H^1)$ and $\partial_t(e^{t\Delta}\vw_0)\in L^2((0,T), H^{-1})$; in particular, $\Phi e^{t\Delta}\vw_0 \in L^2((0,T), H^1)$ and $\partial_t(\Phi e^{t\Delta}\vw_0) \in L^2((0,T), H^{-1})$. We thus have, for $0<t<T$,
\begin{equation*}\begin{split}\| \Phi  e^{t\Delta}\vw_0\|_2^2-\| \Phi   \vw_0\|_2^2=&2\int_0^t\int (\Phi e^{s\Delta}\vw_0)(s,x)\cdot \partial_t( \Phi e^{s\Delta}\vw_0)(s,x)\, ds\ dx\\=&2\int_0^t\int \Phi(x)^2 e^{s\Delta}\vw_0(s,x) \cdot\Delta   e^{s\Delta}\vw_0(s,x)\, ds\, dx
\\=& \int_0^t\int \Phi(x)^2( \Delta \vert e^{s\Delta}\vw_0(s,x) \vert^2-2 \vert \vN  e^{s\Delta}\vw_0(s,x)\vert^2)\, ds\, dx.
\end{split}\end{equation*} so that, as $\vert \Delta \Phi_2\vert\leq C \Phi_2$, 
\begin{equation*}\begin{split} 2 \int_0^t \| \vN  e^{s\Delta}\vw_0(s,.)\|_{L^2(\Phi_2\, dx)}^2\, ds&\\=- \| \Phi  e^{t\Delta}\vw_0\|_2^2+&\| \Phi   \vw_0\|_2^2+ \int_0^t\int  \vert e^{s\Delta}\vw_0(s,x) \vert^2 \Delta\Phi_2(x)\, ds\, dx\\ \leq& C \|\vw_0\|_{L^2(\Phi_2\, dx)}^2 (1+t).
\end{split}\end{equation*}   
By density of $L^2$ in $L^2(\Phi_2\, dx)$, we find that $\vw_0\mapsto e^{t\Delta}\vw_0$ is a bounded map from $L^2(\Phi_2\, dx)$ to  $\mathcal{C}([0,T], L^2(\Phi_2\, dx))\cap L^2((0,T), H^1(\Phi_2\, dx))$. Moreover, since $\partial_t e^{t\Delta}\vw_0=\Delta e^{t\Delta}\vw_0$, we have that $\partial_t e^{t\Delta}\vw_0\in L^2((0,T), H^{-1}(\Phi_2\, dx))$.

 A classical result on the heat kernel states that, if $\mathbb{F}\in L^2((0,T), L^2)$, then $\int_0^t e^{(t-s)\Delta} \mathbb{P}\Div\mathbb{F}\in  \mathcal{C}([0,T], L^2)\cap L^2((0,T), H^1)$ and thus  $\int_0^t e^{(t-s)\Delta} \mathbb{P}\Div\mathbb{F}\in  \mathcal{C}([0,T], L^2(\Phi_2\, dx))\cap L^2((0,T), H^1(\Phi_2\, dx))$. 

As we have
\begin{equation*}\begin{split} \|\mathbb{F}_{\eta,{\epsilon,\alpha}}(t,.)\|_2\leq &\| \varphi_\epsilon*(\theta_\alpha\vu_{\epsilon,\alpha})\|_\infty (\int_{\vert x\vert\leq \frac 2 \alpha +\epsilon} \vert \vu_{\epsilon,\alpha}(t,x)\vert^2\, dx)^{\frac 1 2} \\&+\| \varphi_\epsilon*(\theta_\alpha\vb_{\epsilon,\alpha})\|_\infty (\int_{\vert x\vert\leq \frac 2 \alpha +\epsilon} \vert \vb_{\epsilon,\alpha}(t,x)\vert^2\, dx)^{\frac 1 2},\end{split}\end{equation*}    we find that 
$$\mathcal{L}(\mathbb{F}_{\eta,{\epsilon,\alpha}})\in \mathcal{C}([0,T], L^2)\cap L^2((0,T), H^1)\subset  \mathcal{C}([0,T], L^2(\Phi_2\, dx))\cap L^2((0,T), H^1(\Phi_2\, dx)).$$ Moreover, $\partial_t \mathcal{L}(\mathbb{F}_{\eta,{\epsilon,\alpha}})=\Delta \mathcal{L}(\mathbb{F}_{\eta,{\epsilon,\alpha}}) -\mathbb{P}\Div \mathbb{F}_{\eta,{\epsilon,\alpha}}$ with
\[ \Delta \mathcal{L}(\mathbb{F}_{\eta,{\epsilon,\alpha}}) \in L^2((0,T), H^{-1}(\Phi_2\, dx))\]
and 
\[ \mathbb{P}\Div(\mathbb{F}_{\eta,{\epsilon,\alpha}}) \in L^2((0,T), H^{-1})\subset L^2((0,T), H^{-1}(\Phi_2\, dx)).\]

Finally, as $\Phi  \vv_{\eta, {\epsilon,\alpha}}\in L^2((0,T), H^1)$ and $\partial_t(\Phi  \vv_{\eta, {\epsilon,\alpha}})\in L^2((0,T), H^{-1})$, one has 
\begin{equation*}\| \Phi  \vv_{\eta, {\epsilon,\alpha}}(t,.)\|_2^2-\!\| \Phi  \vv_{\eta, {\epsilon,\alpha}}(0,.)\|_2^2\!=2\!\int_0^t\!\int (\Phi\vv_{\eta, {\epsilon,\alpha}})(s,x)\cdot \partial_t( \Phi\vv_{\eta, {\epsilon,\alpha}})(s,x)\, ds\ dx.\tag*{\qedhere}\end{equation*}
\end{proof}

Lemma \ref{le:weight2} will be a key ingredient for controlling the norm of $  \vv_{\eta, {\epsilon,\alpha}}$ in $ \mathcal{C}([0,T], L^2(\Phi_2\, dx))\cap L^2((0,T), H^1(\Phi_2\, dx))$ and the existence time of the solution  $  \vv_{\eta, {\epsilon,\alpha}}$  independently from $\epsilon$ and $\alpha$.

\begin{proposition} \label{prop:mollif3}  Let $2< p<+\infty$ and $0<\gamma<2$. There exists a constant $C_2>1$ such that, for every $\epsilon>0$, every $\alpha>0$, every $\eta>0$ and every $\vu_{0}\in L^p(\Phi_\gamma\, dx)$ with  $\Div \vu_0=0$, writing $\vu_0=\vb_{0,\eta}+\vv_{0,\eta}$ with $\vv_{0,\eta}\in  L^2(\Phi_2\, dx)$, $\Div \vv_{0,\eta}=0$ and $\|\vb_{0,\eta}\|_r<\eta$, $\Div\vb_{0,\eta}=0$, and writing $\vu_{\epsilon,\alpha}=\vv_{\eta,{\epsilon,\alpha}}+\vb_{\eta,{\epsilon,\alpha}}$ with $\vu_{\epsilon,\alpha}\in \mathcal{C}([0,T_{\epsilon,\alpha}), L^p(\Phi_\gamma\, dx)$ (as described in Proposition \ref{prop:mollif}) and $\vb_{\eta,{\epsilon,\alpha}}\in L^\infty((0,T_{[\eta]}), L^r)$ (as described in  Proposition \ref{prop:mollif2}),  the mollified equation
  \begin{equation}\label{NSEq10} \vv_{\eta,{\epsilon,\alpha}}=e^{t\Delta}\vv_{0,\eta}-B_{\epsilon,\alpha}(\vb_{\eta,{\epsilon,\alpha}},\vv_{\eta,{\epsilon,\alpha}})-B_{\epsilon,\alpha}(\vv_{\eta,{\epsilon,\alpha}},\vb_{\eta,{\epsilon,\alpha}})-B_{\epsilon,\alpha}(\vv_{\eta,{\epsilon,\alpha}},\vv_{\eta,{\epsilon,\alpha}})\end{equation} has a unique solution  on $(0,   T_{\eta,{\epsilon,\alpha}})\times \mathbb{R}^3$ such that 
 \begin{itemize}
 \item $ T_{\eta,{\epsilon,\alpha}}=\min(T_{\epsilon,\alpha}, \frac 1{C_2}T_{[\eta]},  \frac 1{C_2} \frac 1{1+ \max(1,\alpha)^6\|\vv_{0,\eta}\|_{L^2(\Phi_2\, dx)}^4})$
 \item $\vv_{\eta,{\epsilon,\alpha}}\in\mathcal{C}([0, T_{\eta,{\epsilon,\alpha}}], L^2(\Phi_2\, dx))$ with
\[\sup_{0\leq t\leq T_{\eta,{\epsilon,\alpha}}} \|\vv_{\eta,{\epsilon,\alpha}}(t,.)\|_{L^2(\Phi_2\, dx)}\leq 2  \| \vv_{0,\eta}\|_{L^2(\Phi_2\, dx)},\]
 \item $\vv_{\eta,{\epsilon,\alpha}}\in L^2((0, T_{\eta,{\epsilon,\alpha}}), H^1(\Phi_2\, dx))$ with
\[\|\vv_{\eta,{\epsilon,\alpha}}(t,.)\|_{L^2((0,T_{\eta,{\epsilon,\alpha}})(H^1(L^2(\Phi_2\, dx))}\leq C_2  \|\vv_{0,\eta}\|_{L^2(\Phi_2\, dx)}.\] \end{itemize}
 In particular, we have
 $$ T_{\epsilon,\alpha}\geq \min(\frac 1{C_2} (\frac 1{C_1\|\vb_{0,\eta}\|_r})^{\frac{2r}{r-3}},  \frac 1{C_2} \frac 1{1+ \max(1,\alpha)^6\|\vv_{0,\eta}\|_{L^2(\Phi_2\, dx)}^4}).$$
\end{proposition}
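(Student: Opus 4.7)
The natural move is to avoid a fresh fixed point and instead set $\vv_{\eta,\epsilon,\alpha}:=\vu_{\epsilon,\alpha}-\vb_{\eta,\epsilon,\alpha}$ on $[0,\min(T_{\epsilon,\alpha},T_{[\eta]}))$. A direct subtraction of equations (\ref{NSEq7}) and (\ref{NSEq8}) shows that this difference solves (\ref{NSEq10}); uniqueness within the class $\mathcal{C}([0,T],L^2(\Phi_2\,dx))\cap L^2((0,T),H^1(\Phi_2\,dx))$ is then a Gronwall estimate on the difference of two candidate solutions, using the same nonlinear bounds as the main argument. The real content of the proposition is therefore the \emph{a priori} estimate in $L^2(\Phi_2\,dx)$, which must be uniform in $\epsilon$ but may depend on $\alpha$.

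\textbf{Energy identity and the main obstacle.} Lemma \ref{le:weight2} provides the identity
\[ \|\vv(t)\|_{L^2(\Phi_2)}^2 - \|\vv_{0,\eta}\|_{L^2(\Phi_2)}^2 = 2\int_0^t\!\int \vv\cdot(\Delta\vv-\mathbb{P}\Div\mathbb{F}_{\eta,\epsilon,\alpha})\,\Phi_2\,dx\,ds.\]
Integration by parts of the Laplacian term yields $-2\|\vN\vv\|_{L^2(\Phi_2)}^2 + \int|\vv|^2\Delta\Phi_2\,dx$, and $|\Delta\Phi_2|\leq C\Phi_2$ makes this a harmless $C\|\vv\|_{L^2(\Phi_2)}^2$. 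The nonlinearity decomposes into three bilinear pieces
\[ \mathbb{F}_{\eta,\epsilon,\alpha} = (\varphi_\epsilon*\theta_\alpha\vv)\otimes\vv + (\varphi_\epsilon*\theta_\alpha\vv)\otimes\vb + (\varphi_\epsilon*\theta_\alpha\vb)\otimes\vv\]
(the $\vb\otimes\vb$ term cancels), and the pressure piece of $\mathbb{P}\Div\mathbb{F}$ is handled by $\Div\vv=0$: writing $\mathbb{P}\Div\mathbb{F}=\Div\mathbb{F}+\vN Q$, one has $\int\vv\cdot\vN Q\,\Phi_2\,dx = -\int Q\,\vv\cdot\vN\Phi_2\,dx$, a lower-order contribution since $|\vN\Phi_2|\leq C\Phi_2$. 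The delicate issue is to bound the three bilinear pairings uniformly in $\epsilon$: invoking Lemma \ref{le:troncatur} to put $\varphi_\epsilon*(\theta_\alpha\vv)$ in $L^\infty$ produces a fatal $\epsilon^{-3/2}$ factor. The fix is to keep the mollified factor in $L^2$: since $\theta_\alpha$ is supported in $\{|x|\leq 2/\alpha\}$ where $\Phi^{-1}\leq 1+2/\alpha$, Young's inequality yields
\[ \|\varphi_\epsilon*(\theta_\alpha\vv)\|_2 \leq \|\theta_\alpha\vv\|_2 \leq (1+2/\alpha)\,\|\vv\|_{L^2(\Phi_2)}\]
with \emph{no} $\epsilon$ dependence; a similar argument on the divergence $\Div(\varphi_\epsilon*\theta_\alpha\vv) = \varphi_\epsilon*(\vN\theta_\alpha\cdot\vv)$ removes any $\epsilon$ after integration by parts. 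The companion factor $\vv$ or $\vN\vv$ is controlled via Sobolev and Lemma \ref{le:diff}, and a small multiple of $\|\vN\vv\|_{L^2(\Phi_2)}^2$ is absorbed on the left by Young.

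\textbf{Gronwall and conclusion.} The mixed pieces involving $\vb$ are dealt with by the pointwise bounds of Proposition \ref{prop:mollif2} ($\|\vb\|_r\leq 2\eta$, $\sqrt s\,\|\vN\vb\|_r\leq C\eta$, $s^{3/(2r)}\|\vb\|_\infty\leq C\eta$), whose time-singularities are integrable since $r>3$, giving contributions of size $O(\eta)$ times powers of $T_{[\eta]}$. Assembling everything produces a differential inequality of the form
\[ \tfrac{d}{dt}\|\vv\|_{L^2(\Phi_2)}^2 + \|\vN\vv\|_{L^2(\Phi_2)}^2 \leq C\bigl(1+\max(1,\alpha)^6\|\vv\|_{L^2(\Phi_2)}^4\bigr)\|\vv\|_{L^2(\Phi_2)}^2 + C\,\eta^2.\]
A standard continuity argument starting from $\|\vv(0,\cdot)\|_{L^2(\Phi_2)}=\|\vv_{0,\eta}\|_{L^2(\Phi_2)}$ preserves the bootstrap bound $\|\vv(t)\|_{L^2(\Phi_2)}\leq 2\|\vv_{0,\eta}\|_{L^2(\Phi_2)}$ throughout precisely the interval $[0,T_{\eta,\epsilon,\alpha}]$ described in the statement, and integration of the dissipation yields the $L^2 H^1(\Phi_2\,dx)$ bound via Lemma \ref{le:diff}. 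Finally, the improved lower bound on $T_{\epsilon,\alpha}$ follows from the blow-up criterion of Proposition \ref{prop:mollif}: on $[0,T_{\eta,\epsilon,\alpha})$ both $\|\vv\|_{L^2(\Phi_2)}$ and $\|\vb\|_r$ remain bounded, so $\|\vu_{\epsilon,\alpha}\|_{L^2(\Phi_4)}\leq \|\vv\|_{L^2(\Phi_2)}+C\|\vb\|_r$ cannot blow up, forcing $T_{\epsilon,\alpha}$ to exceed the stated minimum.
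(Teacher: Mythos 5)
Your overall strategy coincides with the paper's: define $\vv_{\eta,\epsilon,\alpha}=\vu_{\epsilon,\alpha}-\vb_{\eta,\epsilon,\alpha}$, invoke the energy identity of Lemma \ref{le:weight2}, split the nonlinearity into the three bilinear pieces (the $\vb\otimes\vb$ term indeed cancels), treat the pressure via $\Div\vv=0$ and $\vert\vN\Phi_2\vert\leq C\Phi_3$, absorb the gradient terms by Young, and close with a bootstrap. You also correctly diagnose that invoking Lemma \ref{le:troncatur} would inject a fatal $\epsilon^{-3/2}$. However, your proposed fix contains a genuine gap.

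The bound you substitute, $\|\varphi_\epsilon*(\theta_\alpha\vv)\|_2\leq\|\theta_\alpha\vv\|_2\leq(1+2/\alpha)\|\vv\|_{L^2(\Phi_2\,dx)}$, trades the $\epsilon^{-3/2}$ for a factor $(1+2/\alpha)$ that blows up as $\alpha\to 0$. This is incompatible with the statement you are proving: the only admissible $\alpha$-dependence is through $\max(1,\alpha)^6$, which tends to $1$ as $\alpha\to 0$, and small $\alpha$ is precisely the regime needed downstream (in Theorem \ref{theo:mollif4} and Proposition \ref{prop:final} one takes $\alpha_n\to 0$, and the rescaling argument requires the existence time not to degenerate there). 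Your final differential inequality displays $\max(1,\alpha)^6$, but nothing in your argument produces it from $(1+2/\alpha)$; as written, the cubic term $A_4$ and the pressure term would carry powers of $1/\alpha$ and the resulting lower bound on the existence time would collapse as $\alpha\to 0$. The paper's resolution is to keep the weight \emph{on the mollified factor}: since $\vert\varphi_\epsilon*g\vert\leq C\,\mathcal{M}_g$ pointwise and $\Phi_2$ is an $\mathcal{A}_2$ Muckenhoupt weight, one has $\|\varphi_\epsilon*(\theta_\alpha\vv)\|_{L^2(\Phi_2\,dx)}\leq C\|\mathcal{M}_{\theta_\alpha\vv}\|_{L^2(\Phi_2\,dx)}\leq C'\|\vv\|_{L^2(\Phi_2\,dx)}$, uniformly in both $\epsilon$ and $\alpha$; the weight $\Phi^3$ coming from $\vN\Phi_2$ is then distributed as $\Phi\cdot\Phi\cdot\Phi$ over the three factors ($\|\Phi\vv\|_6\|\Phi\vv\|_3\|\Phi(\varphi_\epsilon*(\theta_\alpha\vv))\|_2$), and the analogous weighted boundedness of the Riesz transforms on $L^{6/5}(\Phi_{12/5}\,dx)$ handles the pressure. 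The sole source of the $\max(1,\alpha)^6$ is then the commutator term $\vert\vv\vert^2\,\varphi_\epsilon*(\vv\cdot\vN\theta_\alpha)$, via $\vert\vN\theta_\alpha(x)\vert\leq C\max(\alpha,1)\Phi(x)$ \ddh\ a step you allude to but do not carry out. Without the Muckenhoupt/maximal-function estimate your argument does not yield the stated proposition.
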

\begin{proof} We write 
 \begin{equation*}\begin{split} \partial_t\vv_{\eta,{\epsilon,\alpha}}=\Delta\vv_{\eta,{\epsilon,\alpha}}&-\mathbb{P}\Div((\varphi_\epsilon*(\theta_\alpha \vv_{\eta,{\epsilon,\alpha}}))\otimes\vb_{\eta,{\epsilon,\alpha}})-\mathbb{P}\Div ((\varphi_\epsilon*(\theta_\alpha \vb_{\eta,{\epsilon,\alpha}}))\otimes\vv_{\eta,{\epsilon,\alpha}})\\&-\mathbb{P}\Div((\varphi_\epsilon*(\theta_\alpha \vv_{\eta,{\epsilon,\alpha}}))\otimes\vv_{\eta,{\epsilon,\alpha}})\\=\Delta\vv_{\eta,{\epsilon,\alpha}}&-\mathbb{P}\Div((\varphi_\epsilon*(\theta_\alpha \vv_{\eta,{\epsilon,\alpha}}))\otimes\vb_{\eta,{\epsilon,\alpha}})-\mathbb{P}\Div ((\varphi_\epsilon*(\theta_\alpha \vb_{\eta,{\epsilon,\alpha}}))\otimes\vv_{\eta,{\epsilon,\alpha}})\\&- \Div((\varphi_\epsilon*(\theta_\alpha \vv_{\eta,{\epsilon,\alpha}})\otimes  \vv_{\eta,{\epsilon,\alpha}}))-\vN q_{\eta,\epsilon,\alpha}\end{split}\end{equation*} with
 \[ q_{\eta,\epsilon,\alpha}=\sum_{1\leq i\leq 3}\sum_{1\leq j\leq 3} R_iR_j ((\varphi_\epsilon*(\theta_\alpha \vv_{\eta,{\epsilon,\alpha},i}))\vv_{\eta,{\epsilon,\alpha},j}). \] This gives
  (as $\Div \vv_{\eta,\epsilon,\alpha}=0$)
 \begin{equation*}\begin{split} 2\partial_t\vv_{\eta,{\epsilon,\alpha}}\cdot&\vv_{\eta,{\epsilon,\alpha}}= \Delta( \vert \vv_{\eta,{\epsilon,\alpha}}\vert^2)- 2\vert \vN\otimes\vv_{\eta,{\epsilon,\alpha}}\vert^2\\ &-2\vv_{\eta,{\epsilon,\alpha}}\cdot\left(\mathbb{P}\Div((\varphi_\epsilon*(\theta_\alpha \vv_{\eta,{\epsilon,\alpha}}))\otimes\vb_{\eta,{\epsilon,\alpha}})+\mathbb{P}\Div ((\varphi_\epsilon*(\theta_\alpha \vb_{\eta,{\epsilon,\alpha}}))\otimes\vv_{\eta,{\epsilon,\alpha}})\right)\\ &-\Div( \vert  \vv_{\eta,{\epsilon,\alpha}}\vert^2 (\varphi_\epsilon*(\theta_\alpha \vv_{\eta,{\epsilon,\alpha}})  ))+2 \vert  \vv_{\eta,{\epsilon,\alpha}}\vert^2 \Div (\varphi_\epsilon*(\theta_\alpha \vv_{\eta,{\epsilon,\alpha}}) )\\&-2\Div(q_{\eta,\epsilon,\alpha} \vv_{\eta,{\epsilon,\alpha}})\end{split}\end{equation*} with
 \[\Div (\varphi_\epsilon*(\theta_\alpha \vv_{\eta,{\epsilon,\alpha}}) )= \varphi_\epsilon*(\vv_{\eta,{\epsilon,\alpha}}\cdot \vN\theta_\alpha).\]
 Integrating against $\Phi(x)^2\, dx$, we obtain
 \begin{equation*} \frac d{dt}\|\vv_{\eta,{\epsilon,\alpha}}\|_{L^2(\Phi_2\, dx)}^2 +2 \|\vN\otimes\vv_{\eta,{\epsilon,\alpha}}\|_{L^2(\Phi_2\, dx)}^2 =\sum_{k=1}^6 A_k
 \end{equation*} with:
 \\ $\bullet$ $A_1=\int \Phi^2  \Delta( \vert \vv_{\eta,{\epsilon,\alpha}}\vert^2)\, dx=\int  \vert \vv_{\eta,{\epsilon,\alpha}}\vert^2 \Delta(\Phi^2 )\, dx\leq C \|\vv_{\eta,{\epsilon,\alpha}}\|_{L^2(\Phi_2\, dx)}^2$ (since $\vert\Delta(\Phi)^2\leq C \Phi^2$);
  \\ $\bullet$ $A_2=-2\int \Phi^2  \vv_{\eta,{\epsilon,\alpha}}\cdot  \mathbb{P}\Div((\varphi_\epsilon*(\theta_\alpha \vv_{\eta,{\epsilon,\alpha}}))\otimes\vb_{\eta,{\epsilon,\alpha}}) \, dx$; since the Riesz transforms are bounded on $L^2(\Phi_2\, dx)$ and since $$\vert \int \Phi^2 f \partial_j g\, dx\vert\leq \|\Phi f\|_{H^1} \|\Phi \partial_j g\|_{H^{-1}}\leq C (\|\Phi f\|_2 +\|\Phi\vN f\|_2) \|\Phi g\|_2,$$ we have (since $\|\varphi_\epsilon*f\|_{L^2(\Phi_2\, dx)}\leq C \|\mathcal{M}_f\|_{L^2(\Phi_2\, dx)}\leq C' \|f\|_{L^2(\Phi_2\, dx)}$)
   \begin{equation*}\begin{split} A_2\leq& C (\|\vv_{\eta,{\epsilon,\alpha}}\|_{L^2(\Phi_2\, dx)} +\|\vN\otimes\vv_{\eta,{\epsilon,\alpha}}\|_{L^2(\Phi_2\, dx)}) \| (\varphi_\epsilon*(\theta_\alpha \vv_{\eta,{\epsilon,\alpha}}))\otimes\vb_{\eta,{\epsilon,\alpha}} \|_{L^2(\Phi_2\, dx)}
   \\ \leq &C'(\|\vv_{\eta,{\epsilon,\alpha}}\|_{L^2(\Phi_2\, dx)} +\|\vN\otimes\vv_{\eta,{\epsilon,\alpha}}\|_{L^2(\Phi_2\, dx)}) \| \vv_{\eta,{\epsilon,\alpha}}\|_{L^2(\Phi_2\, dx)} 
   \|\vb_{\eta,{\epsilon,\alpha}} \|_\infty \\\leq & C''(\|\vv_{\eta,{\epsilon,\alpha}}\|_{L^2(\Phi_2\, dx)} +\|\vN\otimes\vv_{\eta,{\epsilon,\alpha}}\|_{L^2(\Phi_2\, dx)})  \| \vv_{\eta,{\epsilon,\alpha}}\|_{L^2(\Phi_2\, dx)} 
 \eta t^{-\frac 3{2r}}
 \\ \leq & \frac 1{10}  \|\vN\otimes\vv_{\eta,{\epsilon,\alpha}}\|_{L^2(\Phi_2\, dx)}^2 + C''' \|\vv_{\eta,{\epsilon,\alpha}}\|_{L^2(\Phi_2\, dx)}^2 (1+ \eta^2 t^{-\frac 3 r}) ;
 \end{split}\end{equation*} 
   \\ $\bullet$ $A_3=-2\int \Phi^2  \vv_{\eta,{\epsilon,\alpha}}\cdot   \mathbb{P}\Div ((\varphi_\epsilon*(\theta_\alpha \vb_{\eta,{\epsilon,\alpha}}))\otimes\vv_{\eta,{\epsilon,\alpha}}) \, dx$; with similar computations as for $A_2$ we find  
    \begin{equation*}\begin{split} A_3\leq&  \frac 1{10}  \|\vN\otimes\vv_{\eta,{\epsilon,\alpha}}\|_{L^2(\Phi_2\, dx)}^2 + C''' \|\vv_{\eta,{\epsilon,\alpha}}\|_{L^2(\Phi_2\, dx)}^2 (1+ \eta^2 t^{-\frac 3 r}) ;    \end{split}\end{equation*} 
   $\bullet$ $A_4=-\int \Phi^2 \Div( \vert  \vv_{\eta,{\epsilon,\alpha}}\vert^2 (\varphi_\epsilon*(\theta_\alpha \vv_{\eta,{\epsilon,\alpha}})  ))\, dx$; since $\vert \vN \Phi^2\vert \leq C \Phi^3$, we have
   $$A_4\leq  C \int \Phi^3  \vert  \vv_{\eta,{\epsilon,\alpha}}\vert^2 \vert \varphi_\epsilon*(\theta_\alpha \vv_{\eta,{\epsilon,\alpha}})  \vert\, dx;$$ as $\Phi  \vv_{\eta,{\epsilon,\alpha}}\in L^2$, we have  $\Phi(\varphi_\epsilon*(\theta_\alpha \vv_{\eta,{\epsilon,\alpha}})) \in L^2$; on the other hand, we have $\Phi  \vv_{\eta,{\epsilon,\alpha}}\in L^2$, we have  $\Phi     \vv_{\eta,{\epsilon,\alpha}}\in H^1$, so that $\Phi     \vv_{\eta,{\epsilon,\alpha}}\in L^3\cap L^6$;  thus, we have
       \begin{equation*}\begin{split} A_4\leq& C \|\Phi  \vv_{\eta,{\epsilon,\alpha}}\|_6 \|\Phi  \vv_{\eta,{\epsilon,\alpha}}\|_3 \|\Phi ( \varphi_\epsilon*(\theta_\alpha \vv_{\eta,{\epsilon,\alpha}}) )\|_2
       \\ \leq& C'  \|\Phi  \vv_{\eta,{\epsilon,\alpha}}\|_6 \|\Phi  \vv_{\eta,{\epsilon,\alpha}}\|_3 \|\Phi  \vv_{\eta,{\epsilon,\alpha}}\|_2\\ \leq& C'' ( \| \vv_{\eta,{\epsilon,\alpha}}\|_{L^2(\Phi_2\, dx)} + \|\vN\otimes\vv_{\eta,{\epsilon,\alpha}}\|_{L^2(\Phi_2\, dx)})^{\frac 3 2}  \| \vv_{\eta,{\epsilon,\alpha}}\|_{L^2(\Phi_2\, dx)}^{\frac 3 2}\\ \leq &  \frac 1{10}  \|\vN\otimes\vv_{\eta,{\epsilon,\alpha}}\|_{L^2(\Phi_2\, dx)}^2 + C'''  \| \vv_{\eta,{\epsilon,\alpha}}\|_{L^2(\Phi_2\, dx)}^2 (1+  \| \vv_{\eta,{\epsilon,\alpha}}\|_{L^2(\Phi_2\, dx)}^4);
 \end{split}\end{equation*} 
 \\ $\bullet$ $A_5=-2\int\Phi^2 \Div(q_{\eta,\epsilon,\alpha} \vv_{\eta,{\epsilon,\alpha}})\, dx$; similarly, we have   $$A_5\leq  C \int \Phi^3  \vert q_{\eta,\epsilon,\alpha} \vert \vert  \vv_{\eta,{\epsilon,\alpha}}  \vert\, dx\leq  \|\Phi  \vv_{\eta,{\epsilon,\alpha}}\|_6 \|\Phi^2 q_{\eta,\epsilon,\alpha} \|_{6/5} ;$$ we recall that 
 \[ q_{\eta,\epsilon,\alpha}=\sum_{1\leq i\leq 3}\sum_{1\leq j\leq 3} R_iR_j ((\varphi_\epsilon*(\theta_\alpha \vv_{\eta,{\epsilon,\alpha},i}))\vv_{\eta,{\epsilon,\alpha},j}) \] with $\Phi  \vv_{\eta,{\epsilon,\alpha}}\in L^3$ and $\Phi (\varphi_\epsilon*(\theta_\alpha \vv_{\eta,{\epsilon,\alpha}}))\in L^2$;  thus, $$\Phi^2  ((\varphi_\epsilon*(\theta_\alpha \vv_{\eta,{\epsilon,\alpha}}))\otimes \vv_{\eta,{\epsilon,\alpha}})\in L^{\frac 6 5}$$ or, equivalently, $   (\varphi_\epsilon*(\theta_\alpha \vv_{\eta,{\epsilon,\alpha}}))\otimes \vv_{\eta,{\epsilon,\alpha}}\in L^{\frac 6 5}(\Phi^{\frac {12} 5}\, dx);$ as $\frac{12}5<3$, $\Phi^{\frac {12} 5}$ belongs to the Muckenhoupt class $\mathcal{A}_{\frac 6 5}$, and the Riesz transforms are bounded on $L^{\frac 6 5}(\Phi^{\frac {12} 5}\, dx)$; we thus get
      \begin{equation*}\begin{split} A_5\leq& C \|\Phi  \vv_{\eta,{\epsilon,\alpha}}\|_6 \|\Phi  \vv_{\eta,{\epsilon,\alpha}}\|_3 \|\Phi ( \varphi_\epsilon*(\theta_\alpha \vv_{\eta,{\epsilon,\alpha}}) )\|_2 \\ \leq &  \frac 1{10}  \|\vN\otimes\vv_{\eta,{\epsilon,\alpha}}\|_{L^2(\Phi_2\, dx)}^2 + C'  \| \vv_{\eta,{\epsilon,\alpha}}\|_{L^2(\Phi_2\, dx)}^2 (1+  \| \vv_{\eta,{\epsilon,\alpha}}\|_{L^2(\Phi_2\, dx)}^4);
 \end{split}\end{equation*} 
 \\ $\bullet$ $A_6=2\int \Phi^2   \vert  \vv_{\eta,{\epsilon,\alpha}}\vert^2 ( \varphi_\epsilon*(\vv_{\eta,{\epsilon,\alpha}}\cdot \vN\theta_\alpha))\, dx$; we have  
       \begin{equation*}\begin{split} A_6\leq& C \|\Phi  \vv_{\eta,{\epsilon,\alpha}}\|_6 \|\Phi  \vv_{\eta,{\epsilon,\alpha}}\|_3 \| \varphi_\epsilon*(  \vv_{\eta,{\epsilon,\alpha}}\cdot \vN \theta_\alpha )\|_2 \\ \leq & C \|\Phi  \vv_{\eta,{\epsilon,\alpha}}\|_6 \|\Phi  \vv_{\eta,{\epsilon,\alpha}}\|_3 \|  \vv_{\eta,{\epsilon,\alpha}}\cdot \vN \theta_\alpha \|_2; \end{split}\end{equation*}  as $\vert  \vN \theta_\alpha (x)\vert\leq C \max(\alpha,1)\Phi(x))$, we find
       $$A_6\leq  \frac 1{10}  \|\vN\otimes\vv_{\eta,{\epsilon,\alpha}}\|_{L^2(\Phi_2\, dx)}^2 + C'  \| \vv_{\eta,{\epsilon,\alpha}}\|_{L^2(\Phi_2\, dx)}^2 (1+  \max(\alpha,1)^6\| \vv_{\eta,{\epsilon,\alpha}}\|_{L^2(\Phi_2\, dx)}^4).$$

Thus, integrating on $(0,t)$ for $0<t<T$ with $T<\min(T_{\epsilon,\alpha}, T_{[\eta]})$, we obtain, for $C_T=\sup_{0<t<T} \| \vv_{\eta,{\epsilon,\alpha}}(t,.)\|_{L^2(\Phi_2\, dx)}^2$,
\[ C_T\leq \|\vv_{0,\eta}\|_{L^2(\Phi_2\, dx)}^2 + C T C_T+ C C_T T^{1-\frac 3 r}\eta^2 + C T \max(1,\alpha)^6 C_T^3. \]. We obtain $C_T\leq 4  \|\vv_{0,\eta}\|_{L^2(\Phi_2\, dx)}^2 $ if 
$$ C T (1+16 \max(1,\alpha)^6   \|\vv_{0,\eta}\|_{L^2(\Phi_2\, dx)}^4) < \frac 1 4$$ and
 \begin{equation*} C T^{1-\frac 3 r}\eta^2 <\frac 1 2.\tag*{\qedhere}\end{equation*}  
  \end{proof}

\subsection{Rescaling and global estimates} 
The minoration on $T_{\epsilon,\alpha}$ given in Proposition \ref{prop:mollif3} depends on $\eta$, i.e. on $\|\vb_{0,\eta}\|_r$ and on $\|\vv_{0,\eta}\|_{L^2(\Phi_2\, dx)}$. As a matter of fact, following  Fern\'andez-Dalgo \&  Lemari\'e-Rieusset \cite{FLR20} and Bradshaw, Kucavica \& Tsai \cite{BKT22}, we can partly get rid of this restriction:

\begin{theorem}\label{theo:mollif4}  Let $2< p<+\infty$ and $0<\gamma<2$.  For every $\vu_{0}\in L^p(\Phi_\gamma\, dx)$ with  $\Div \vu_0=0$,  let $\vu_{\epsilon,\alpha}$ be the (maximal) solution of the mollified equations   
  \begin{equation}\label{NSEq11} \left\{ \begin{split} \partial_t\vu_{\epsilon,\alpha}=&\Delta\vu_{\epsilon,\alpha} -\mathbb{P}\Div((\varphi_\epsilon*(\theta_{\alpha}\vu_{\epsilon}))\otimes\vu_{\epsilon,\alpha})
 \\ \lim_{t\rightarrow 0} \vu_{\epsilon,\alpha}(t,.)=&\vu_{0}\end{split}\right.\end{equation}   in $\mathcal{C}([0,T_\epsilon), L^p(\Phi_\gamma\, dx))$. For every $T>0$, there exists $\alpha_{T,\vu_0}>0$ such that, for every $0<\epsilon$ and every $0<\alpha<\alpha_{T,\vu_0}$, one has $T_{\epsilon,\alpha}>T$. 
 Moreover, 
 one may split $\vu_{\epsilon,\alpha}$ as $\vu_{\epsilon,\alpha}=\vb_{{\epsilon,\alpha},T}+\vv_{{\epsilon,\alpha},T}$ on $(0,T)\times\mathbb{R}^3$ with
\begin{itemize}
\item $\vb_{{\epsilon,\alpha},T}\in\mathcal{C}([0, T], L^2(\Phi_4\, dx))$,
 \item $\sup_{0\leq t\leq T} \|\vb_{{\epsilon,\alpha},T}(t,.)\|_r\leq C_{3,T, \vu_0}$,
 \item $\sup_{0< t\leq T} t^{\frac 1 2}\|\vN\otimes \vb_{{\epsilon,\alpha},T}(t,.)\|_r\leq  C_{3,T,\vu_0}$, 
 \item $\sup_{0< t\leq T} t^{\frac 3 {2r}}\| \vb_{{\epsilon,\alpha},T}(t,.)\|_\infty\leq  C_{3,T,\vu_0}$,
 \item $\vv_{{\epsilon,\alpha},T}\in\mathcal{C}([0, T], L^2(\Phi_2\, dx))$ with
\[\sup_{0\leq t\leq T} \|\vv_{{\epsilon,\alpha},T}(t,.)\|_{L^2(\Phi_2\, dx)}\leq  C_{3,T,\vu_0},\]
 \item $\vv_{\eta,{\epsilon,\alpha}}\in L^2((0, T), H^1(\Phi_2\, dx))$ with
\[\|\vv_{{\epsilon,\alpha},T}(t,.)\|_{L^2((0,T)(H^1(L^2(\Phi_2\, dx))}\leq   C_{3,T,\vu_0}\]  \end{itemize} where $C_{3,T,\vu_0}$ doesn't depend on $\epsilon$ nor on $\alpha$. 

In particular, we have
\begin{itemize}
\item  $\sup_{0<\epsilon, 0<\alpha<\alpha_T}\sup_{0<t<T}  \|\vu_{\epsilon,\alpha}(t,.)\|_{L^2(\Phi_{\frac 7 2}\, dx)}\leq  C_{4,T,\vu_0}$
\item $\sup_{0<\epsilon, 0<\alpha<\alpha_T} \|\vu_{\epsilon,\alpha}(t,.)\|_{L^2((0,T), H^{1/2}(\Phi_2\, dx))} \leq  C_{4,T,\vu_0}$
\item $\sup_{0<\epsilon, 0<\alpha<\alpha_T} \|\partial_t\vu_{\epsilon,\alpha}(t,.)\|_{L^2((0,T),H^{-4}(\Phi_8\, dx))}\leq  C_{4,T,\vu_0}$
\end{itemize} \end{theorem}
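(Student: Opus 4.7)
The plan is to iterate the local constructions of Propositions \ref{prop:mollif2} and \ref{prop:mollif3} up to time $T$, re-splitting the solution at each restart, following the technique of \cite{FLR20,BKT22}. First I would fix $\eta>0$ so that the $\vb$-existence time $T_{[\eta]}=(C_1\eta)^{-2r/(r-3)}$ furnished by Proposition \ref{prop:mollif2} exceeds $T$, apply Lemma \ref{prop:split} to write $\vu_0=\vv_{0,\eta}+\vb_{0,\eta}$, and set $\vb_{\epsilon,\alpha,T}:=\vb_{\eta,\epsilon,\alpha}$; the $L^r$, $\vN\otimes\vb$, and $L^\infty$ bounds in the statement are then exactly those of Proposition \ref{prop:mollif2}, uniformly in $\epsilon,\alpha$. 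The bulk of the work lies in constructing $\vv_{\epsilon,\alpha,T}$ on the full interval $[0,T]$.

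Proposition \ref{prop:mollif3} provides a solution $\vv_{\eta,\epsilon,\alpha}$ on an interval of length roughly $1/(1+\|\vv_{0,\eta}\|_{L^2(\Phi_2\,dx)}^4)$. The scheme is to iterate: at each time $t_k$ where the $\vv$-branch is about to terminate, the difference $\vu_{\epsilon,\alpha}(t_k,\cdot)-\vb_{\epsilon,\alpha,T}(t_k,\cdot)$ is still in $L^2(\Phi_2\,dx)$, and one restarts a new $\vv$-problem from this initial datum (the $\vb$ part is already defined on the whole of $[0,T]$ thanks to the first step). For finitely many iterations to cover $[0,T]$, a uniform-in-$\epsilon,\alpha$ bound on $\|\vv(t,\cdot)\|_{L^2(\Phi_2\,dx)}$ on $[0,T]$ must be propagated across the restarts.

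That uniform bound is extracted from the energy inequality established in the course of proving Proposition \ref{prop:mollif3},
\[
\frac{d}{dt}\|\vv\|^2_{L^2(\Phi_2\, dx)}+\|\vN\otimes\vv\|^2_{L^2(\Phi_2\,dx)}\leq C\,\|\vv\|^2_{L^2(\Phi_2\,dx)}\bigl(1+\eta^2 t^{-3/r}+(1+\max(\alpha,1)^6)\|\vv\|^4_{L^2(\Phi_2\, dx)}\bigr),
\]
where only $A_6$ contributes an $\alpha$-dependence. Exploiting that $\vN\theta_\alpha(x)=\alpha(\vN\theta)(\alpha x)$ is supported in $\{|x|\sim 1/\alpha\}$, a region where $\Phi(x)\approx\alpha$, the crude bound $|\vN\theta_\alpha|\leq C\max(\alpha,1)\Phi$ used in the proof of Proposition \ref{prop:mollif3} refines to an effectively $O(\alpha)$ contribution once $\alpha$ is small. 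The rescaling argument of Fern\'andez-Dalgo, Lemari\'e-Rieusset, Bradshaw, Kukavica and Tsai (see \cite{FLR20,BKT22}) then recasts the remaining Riccati-type inequality so that a uniform-on-$[0,T]$ bound on $\|\vv\|_{L^2(\Phi_2\,dx)}$ emerges for $\alpha$ below a threshold $\alpha_{T,\vu_0}$ depending only on $T$ and $\vu_0$. This is the main obstacle, because one must tame the genuinely cubic weight-commutator contribution arising from $\int\vN\Phi^2\cdot\vu\,|\vu|^2$, a term that vanishes identically in the unweighted Leray estimate; without this trade between $\alpha$ and the $L^2(\Phi_2)$ norm, a naive Gr\"onwall argument yields only finite-time existence.

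Once $\vu_{\epsilon,\alpha}=\vv_{\epsilon,\alpha,T}+\vb_{\epsilon,\alpha,T}$ is built on $[0,T]$ with the claimed bounds on each piece, the three consequences in the ``In particular'' part are routine. The $L^\infty((0,T),L^2(\Phi_{7/2}\, dx))$ bound follows from the embedding $L^2(\Phi_2\,dx)+L^r\subset L^2(\Phi_{7/2}\,dx)$, applying H\"older to the $L^r$-piece. The $L^2((0,T),H^{1/2}(\Phi_2\,dx))$ bound comes from interpolating $L^\infty((0,T),L^2(\Phi_2\,dx))$ with $L^2((0,T),H^1(\Phi_2\,dx))$ for $\vv$, combined with the parabolic smoothing estimates of Proposition \ref{prop:mollif2} for $\vb$. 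The bound on $\partial_t\vu_{\epsilon,\alpha}$ in $L^2((0,T),H^{-4}(\Phi_8\,dx))$ is read directly off the mollified equation, using Proposition \ref{prop:proj}: both $\Delta\vu_{\epsilon,\alpha}$ and $\mathbb{P}\Div((\varphi_\epsilon*(\theta_\alpha\vu_{\epsilon,\alpha}))\otimes\vu_{\epsilon,\alpha})$ lie in that space, the latter because the tensor product belongs to $L^2((0,T),L^1(\Phi_4\,dx))$ by H\"older ($L^2(\Phi_2\,dx)\cdot L^2(\Phi_2\,dx)\subset L^1(\Phi_4\,dx)$) applied piece by piece to the splitting.
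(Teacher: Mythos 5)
Your first step (fixing $\eta$ so that $T_{[\eta]}>T$, splitting $\vu_0$ by Lemma \ref{prop:split}, and taking $\vb_{\epsilon,\alpha,T}=\vb_{\eta,\epsilon,\alpha}$ from Proposition \ref{prop:mollif2}) and your treatment of the three ``in particular'' consequences match the paper. The gap is in the construction of $\vv_{\epsilon,\alpha,T}$ on all of $[0,T]$. You propose an iteration with restarts at times $t_k$, and you correctly note that this only closes if a uniform-in-$(\epsilon,\alpha)$ bound on $\|\vv(t,\cdot)\|_{L^2(\Phi_2\,dx)}$ can be propagated across the restarts; but you then assert that this bound ``emerges'' from the rescaling argument of \cite{FLR20,BKT22} without saying how, and it does not. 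The weighted energy inequality is genuinely of Riccati type, $\frac{d}{dt}Y\leq C\,Y(1+\eta^2t^{-3/r}+Y^2)$ with $Y=\|\vv\|^2_{L^2(\Phi_2\,dx)}$, and no Gr\"onwall-type manipulation yields a bound on all of $[0,T]$; the restart datum $\vu_{\epsilon,\alpha}(t_k,\cdot)-\vb_{\eta,\epsilon,\alpha}(t_k,\cdot)$ has an $L^2(\Phi_2\,dx)$ norm which is exactly the quantity you are trying to control, so the scheme is circular. Your claimed refinement of the $A_6$ term to an $O(\alpha)$ contribution is also not correct (on the support of $\vN\theta_\alpha$ one has $\Phi\approx\alpha$, so the pointwise bound $|\vN\theta_\alpha|\leq C\alpha$ only recovers $C\Phi$, not $C\alpha\Phi$), and it is in any case beside the point: the obstruction is the $Y^3$ term coming from $A_4$, $A_5$ and $A_6$, whose coefficient survives for $\alpha\leq1$.

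What the paper actually does is a one-shot scaling argument with no iteration. Setting $\vu_{\lambda,\epsilon,\alpha}(t,x)=\frac1\lambda\vu_{\epsilon,\alpha}(\frac t{\lambda^2},\frac x\lambda)$, one checks that this solves the mollified equation with parameters $(\lambda\epsilon,\frac\alpha\lambda)$ and data $\vu_{\lambda,0}$, so Proposition \ref{prop:mollif3} gives $T_{\epsilon,\alpha}=\lambda^{-2}T_{\lambda,\epsilon,\alpha}\geq\min\bigl(\frac1{C_2}(C_1\|\vb_{0,\eta}\|_r)^{-2r/(r-3)},\ \frac1{C_2}\,\lambda^{-2}(1+\max(1,\frac\alpha\lambda)^6\|\vv_{\eta,\lambda,0}\|^4_{L^2(\Phi_2\,dx)})^{-1}\bigr)$, the $\vb$-term being scale invariant. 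The decisive observation is the identity
\[\lambda^2\|\vv_{\eta,\lambda,0}\|_{L^2(\Phi_2\,dx)}^4=\Bigl(\int|\vv_{0,\eta}(x)|^2\,\frac1{1+|x|^2}\,\frac{\lambda^2+\lambda^2|x|^2}{1+\lambda^2|x|^2}\,dx\Bigr)^2,\]
which tends to $0$ as $\lambda\to0$ by dominated convergence; choosing $\lambda_T$ with $\lambda_T^2(1+\|\vv_{\eta,\lambda_T,0}\|^4_{L^2(\Phi_2\,dx)})<\frac1{C_2T}$ and restricting to $\alpha<\lambda_T$ (so that $\max(1,\frac\alpha{\lambda_T})=1$) gives $T_{\epsilon,\alpha}>T$ directly, and all the stated bounds on $\vv_{\epsilon,\alpha,T}$ follow by undoing the scaling. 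This dominated-convergence computation, which exploits the precise form of the weight $\Phi_2$, is where the theorem is actually proved and is the step missing from your proposal.
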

 
 \begin{proof} We first recall the results on $\vu_{\epsilon,\alpha}$ obtained in Propositions \ref{prop:mollif}, \ref{prop:mollif2}, \ref{prop:mollif3}. Let $T>0$. Following Proposition \ref{prop:mollif3}, we choose ${\eta_T}$ with ${\eta_T}\leq \frac 1{C_1(C_2T)^{\frac{r-3}{2r}}}$ 
  and we split $\vu_0$ into $\vu_0=\vb_{0,{\eta_T}}+\vv_{0,{\eta_T}}$ with $\|\vb_{0,{\eta_T}}\|_\infty<{\eta_T}$ and $\vv_{0,{\eta_T}}\in L^2(\Phi_2\, dx)$. We then split $\vu_{\epsilon,\alpha}$ into $\vu_{\epsilon,\alpha}=\vb_{{\eta_T},\epsilon,\alpha}+\vv_{{\eta_T},\epsilon,\alpha}$. We rescale $\vu_{\epsilon,\alpha}$ into  \begin{equation*}\begin{split}\vu_{\lambda,\epsilon,\alpha}(t,x)=\frac 1\lambda \vu_{\epsilon,\alpha}(\frac t{\lambda^2},\frac x t)=&\frac 1\lambda \vb_{{\eta_T},\epsilon,\alpha}(\frac t{\lambda^2},\frac x t)+\frac 1\lambda \vv_{{\eta_T},\epsilon,\alpha}(\frac t{\lambda^2},\frac x t)\\ =&\vb_{{\eta_T},\lambda,\epsilon,\alpha}(t,x)+\vv_{{\eta_T},\lambda,\epsilon,\alpha}(t,x).
 \end{split}\end{equation*}
 We write similarly
 $$ \vu_{\lambda,0}(x)=\frac 1 \lambda \vu_0(\frac x \lambda),\ \vv_{{\eta_T},\lambda,0}(x)=\frac 1 \lambda \vv_{0,{\eta_T}}(\frac x \lambda), \ \vb_{{\eta_T},\lambda,0}(x)=\frac 1 \lambda \vb_{0,{\eta_T}}(\frac x \lambda).  $$
 We have
 \begin{equation*}\begin{split} \partial_t \vu_{\lambda,\epsilon,\alpha}(t,x)=&\frac 1{\lambda^3} (\partial_t \vu_{\epsilon,\alpha})(\frac t{\lambda^2},\frac x \lambda) \\=& \frac 1{\lambda^3} (\Delta \vu_{\epsilon,\alpha}-\mathbb{P}\Div((\varphi_\epsilon*(\theta_\alpha \vu_{\epsilon,\alpha}))\otimes \vu_{\epsilon,\alpha})(\frac t{\lambda^2},\frac x \lambda)\\ =&\Delta\vu_{\lambda,\epsilon,\alpha}(t,x) -\mathbb{P}\Div(\frac 1 \lambda(\varphi_\epsilon*(\theta_\alpha \vu_{\epsilon,\alpha})(\frac t{\lambda^2},\frac x \lambda))\otimes \vu_{\lambda,\epsilon,\alpha}(t,x))
 \\=&\Delta\vu_{\lambda,\epsilon,\alpha}(t,x) -\mathbb{P}\Div( (\varphi_{\lambda \epsilon}*(\theta_{\frac\alpha \lambda} \vu_{\lambda,\epsilon,\alpha})(t,x))\otimes \vu_{\lambda,\epsilon,\alpha}(t,x))
 \end{split}\end{equation*}
 Thus, $ \vu_{\lambda,\epsilon,\alpha}$ is the solution of the mollified equation
 $$  \vu_{\lambda,\epsilon,\alpha}=e^{t\Delta}\vu_{\lambda,0}- B_{\lambda\epsilon,\frac\alpha\lambda}( \vu_{\lambda,\epsilon,\alpha}, \vu_{\lambda,\epsilon,\alpha}).$$ By Proposition  \ref{prop:mollif3}, we know that the existence time $T_{\lambda,\epsilon,\alpha}$ of $\vu_{\lambda,\epsilon,\alpha}$ can be controlled by below as
  $$ T_{\lambda, \epsilon,\alpha}\geq \min(\frac 1{C_2} \frac 1{(C_1\|\vb_{{\eta_T},\lambda,0}\|_r)^{\frac {2r}{r-3}}},  \frac 1{C_2} \frac 1{1+\max(1,\frac\alpha\lambda)^6\|\vv_{{\eta_T},\lambda,0}\|_{L^2(\Phi_2\, dx)}^4}).$$
  Thus, we have (since $\lambda^2 \|\vb_{{\eta_T},\lambda,0}\|_r^{\frac {2r}{r-3}}=\|\vb_{0,{\eta_T}}\|_r^{\frac {2r}{r-3}}$)
   \begin{equation*}\begin{split} T_{\epsilon,\alpha}=&\frac 1{\lambda^2} T_{\lambda, \epsilon,\alpha}\\ \geq &  \min(\frac 1{C_2} \frac 1{(C_1\|\vb_{0,{\eta_T}}\|_r)^{\frac {2r}{r-3}}},  \frac 1{C_2 \lambda^2} \frac 1{1+\max(1,\frac\alpha\lambda)^6\|\vv_{{\eta_T},\lambda,0}\|_{L^2(\Phi_2\, dx)}^4}).
   \end{split}\end{equation*}
   We have
   \[  \lambda^2( 1+\|\vv_{{\eta_T},\lambda,0}\|_{L^2(\Phi_2\, dx)}^4)=\lambda^2 +(\int \vert \vv_{0,{\eta_T}}(x)\vert^2 \frac 1{1+\vert x\vert^2} \frac{\lambda^2+\lambda^2 \vert x\vert^2}{1+\lambda^2 \vert x\vert^2}\, dx)^2.\]
   For every $x\in\mathbb{R}^3$, we have
   \[ \sup_{0<\lambda<1} \frac{\lambda^2+\lambda^2 \vert x\vert^2}{1+\lambda^2 \vert x\vert^2}=1\text{ and }\lim_{\lambda\rightarrow 0} \frac{\lambda^2+\lambda^2 \vert x\vert^2}{1+\lambda^2 \vert x\vert^2}=0.\]
   Thus, by dominated convergence, we get that
   \[ \lim_{\lambda\rightarrow 0}  \lambda^2( 1+\|\vv_{{\eta_T},\lambda,0}\|_{L^2(\Phi_2\, dx)}^4)=0.\] We take a $\lambda_T\in (01)$ such that  $\lambda_T^2( 1+\|\vv_{{\eta_T},\lambda_T,0}\|_{L^2(\Phi_2\, dx)}^4)<\frac 1{C_2T}$ and we get that $\displaystyle T_{\epsilon,\alpha}=\frac 1{\lambda_T^2} T_{\lambda_T, \epsilon,\alpha}\geq \max(1,\frac\alpha\lambda_T)^6 T$.
   
   In particular, we have   $T_{{\epsilon,\alpha}}>T$ provided that $ \alpha<\alpha_{T,\vu_0} =\frac 1{\lambda_T}$.   
   
   With those values of ${\eta_T}$ and $\lambda_T$, writing $\vb_{\epsilon,\alpha,T}=\vb_{{\eta_T},\epsilon,\alpha}$ and  $\vv_{\epsilon,\alpha,T}(t,x)=\vv_{{\eta_T},\epsilon,\alpha}(t,x)=\lambda_T \vv_{{\eta_T},\lambda_T,\epsilon,\alpha}(\lambda_T^2 t,\lambda_T x)$, we use Proposition \ref{prop:mollif2} and Proposition \ref{prop:mollif3} to get the following estimates on the solution $\vu_{\epsilon,\alpha}$ on $(0,T)$:
\begin{itemize} 
 \item $\sup_{0\leq t\leq T} \|\vb_{\epsilon,\alpha,T}(t,.)\|_r\leq 2\|\vb_{0,{\eta_T}}\|_r $,
 \item $\sup_{0< t\leq T} t^{\frac 1 2}\|\vN\otimes \vb_{{\epsilon,\alpha},T}(t,.)\|_r\leq  2  \|\vN W_1\|_1 \|\vb_{0,{\eta_T}}\|_r $, 
  \item $\sup_{0< t\leq T} t^{\frac 3 {2r}}\|   \vb_{{\epsilon,\alpha},T}(t,.)\|_\infty\leq  C_{3,T,\vu_0} \|\vb_{0,{\eta_T}}\|_r $, 
 \item $\sup_{0\leq t\leq T} \|\vv_{{\epsilon,\alpha},T}(t,.)\|_{L^2(\Phi_2\, dx)}\leq  \frac 2 {\lambda_T} \| \vv_{0,{\eta_T}}\|_{L^2(\Phi_2\, dx)}$,
 \item $ \|\vv_{{\epsilon,\alpha},T}(t,.)\|_{L^2((0,T)(H^1(L^2(\Phi_2\, dx))}\leq   \frac {C_2} {\lambda_T} \| \vv_{0,{\eta_T}}\|_{L^2(\Phi_2\, dx)}$.
\end{itemize}
(where we use   the inequality
   $$ \lambda \|f\|_{L^2(\phi_2\, dx)}^2\leq  \int  \frac 1{\lambda^2 } \vert f(\frac x \lambda)\vert^2 \Phi_2(x)\, dx=    \int   \vert f( x )\vert^2 \frac \lambda{1+\vert \lambda x\vert^2}  \, dx \leq \frac 1 \lambda \|f\|_{L^2(\phi_2\, dx)}^2$$ for $0<\lambda\leq 1$).

In particular, we have
\begin{itemize}
\item  $ \|\vu_{\epsilon,\alpha}(t,.)\|_{L^2(\Phi_{\frac 7 2}\, dx)}\leq  \|\vv_{\epsilon,\alpha,T}(t,.)\|_{L^2(\Phi_{ 2}\, dx)}+ \|\vb_{\epsilon,\alpha,T}(t,.)\|_r)$, so that $$\sup_{0<\epsilon, 0<\alpha<\alpha_T}\sup_{0<t<T}  \|\vu_{\epsilon,\alpha}(t,.)\|_{L^2(\Phi_{\frac 7 2}\, dx)}\leq  C_{4,T,\vu_0}$$
\item We have \begin{equation*}\begin{split}\|\Phi^2 \vb_{\epsilon,\alpha,T}\|_{H^{1/2}} \leq &\sqrt{\|\Phi^2 \vb_{\epsilon,\alpha,T}\|_{2} \|\Phi^2 \vb_{\epsilon,\alpha,T}\|_{H^{1}} } \\ \leq\sqrt{ \|\vb_{\epsilon,\alpha,T}\|_{L^2(\Phi_4\, dx)}}&\sqrt{  \|\vb_{\epsilon,\alpha,T}\|_{L^2(\Phi_4\, dx)}+ \|\vN\otimes \vb_{\epsilon,\alpha,T}\|_{L^2(\Phi_4\, dx)})}\\ \leq &C\sqrt{ \|\vb_{\epsilon,\alpha,T}\|_{r}}\sqrt{ ( \|\vb_{\epsilon,\alpha,T}\|_{r}+ \|\vN\otimes \vb_{\epsilon,\alpha,T}\|_r}
\\\leq &C' (1+\sqrt{C_{3,T,\vu_0}} t^{-\frac 3 {4r}} )\|\vb_{0,\eta_T}\|_r.
\end{split}\end{equation*} 
As \begin{equation*}\begin{split} \|\vu_{\epsilon,\alpha}(t,.)&\|_{L^2((0,T), H^{1/2}(\Phi_4\, dx))}\\ \leq&  \|\vv_{\epsilon,\alpha}(t,.)\|_{L^2((0,T), H^{1}(\Phi_2\, dx))} + \|\vb_{\epsilon,\alpha}(t,.)\|_{L^2((0,T), H^{1/2}(\Phi_4\, dx))} ,\end{split}\end{equation*} 
we get  that $$\sup_{0<\epsilon, 0<\alpha<\alpha_T} \|\vu_{\epsilon,\alpha}(t,.)\|_{L^2((0,T), H^{1/2}(\Phi_2\, dx))} \leq  C_{4,T,\vu_0}$$
\item   From the proof of Theorem \ref{theo:stokes}, we see that $\mathbb{P}\Div ((\varphi_\epsilon*(\theta_\alpha \vu_{{\epsilon,\alpha}}))\otimes \vu_{{\epsilon,\alpha}})$ and $\Delta  \vu_{ {\epsilon,\alpha}}$ are bounded in $L^2 ((0,T), H^{-4}(\Phi_8\, dx))$. Thus, we have   $$\sup_{0<\epsilon, 0<\alpha<\alpha_T} \|\partial_t\vu_{\epsilon,\alpha}(t,.)\|_{L^2((0,T),H^{-4}(\Phi_8\, dx))}\leq  C_{4,T,\vu_0}.$$
\end{itemize} 
Theorem \ref{theo:mollif4} is proved.
 \end{proof}

\section{Solutions to the Navier\ddh Stokes equations.}
In this final section, we prove Theorem \ref{theo:weightLp}. The key tool for going from mollified equations to Navier\ddh Stokes equations will be the following lemma (a simpler variant of the Aubin\ddh Lions theorem):

\begin{lemma} [Rellich\ddh Lions lemma]
\label{le:rellich} Let $\sigma<0<s$, $0<T<+\infty$ and $R>0$. Let $(u_n)_{n\in\mathbb{N}}$ be a sequence of functions on $(0,T)\times\mathbb{R}^3$ such that $u_n$ is bounded in  $ L^2((0,T), H^s)$ and $\partial_t u_n$ is bounded in $L^2((0,T),H^\sigma)$ and $u_n(t,x)=0$ for $\vert x\vert>R$.  Then there exists a subsequence  $(u_{n_k})_{k\in\mathbb{N}}$  and $u_\infty \in  L^2((0,T)\times\mathbb{R}^3 $ such that $u_{n_k}$ is strongly convergent to $u_\infty$  in $L^2((0,T), L^2)$.
\end{lemma}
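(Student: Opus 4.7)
The plan is to reduce the lemma to a classical Aubin\ddh Lions type compactness statement for a spatially regularized version of the sequence, exploiting the uniform compact spatial support of $u_n$.

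First I would fix a standard mollifier $\rho\in\mathcal{D}(\mathbb{R}^3)$ with $\rho\geq 0$, $\int\rho=1$, supported in the unit ball, set $\rho_\epsilon(x)=\epsilon^{-3}\rho(x/\epsilon)$ for $\epsilon\in(0,1]$, and introduce the spatial regularizations $v_n^\epsilon=\rho_\epsilon*_x u_n$. Each $v_n^\epsilon$ is supported in $\overline{B(0,R+1)}$. From $\hat\rho(0)=1$ and $\hat\rho$ smooth we get $|1-\hat\rho(\epsilon\xi)|\leq C\min(1,\epsilon|\xi|)$, so by Plancherel
\[
\|u_n-v_n^\epsilon\|_{L^2((0,T),L^2)}\leq C\,\epsilon^{\min(s,1)}\|u_n\|_{L^2((0,T),H^s)},
\]
and the right-hand side tends to $0$ uniformly in $n$ as $\epsilon\to 0$.

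Next I would prove, for each fixed $\epsilon>0$, compactness of $(v_n^\epsilon)_n$ in $L^2((0,T),L^2)$. Convolution with $\rho_\epsilon$ improves Sobolev regularity: $\|\rho_\epsilon*f\|_{H^k}\leq C_{\epsilon,k,\tau}\|f\|_{H^\tau}$ for any real $k,\tau$ (use the rapid decay of $\hat\rho_\epsilon$ at infinity). Thus $(v_n^\epsilon)_n$ is bounded in $L^2((0,T),H^1(\mathbb{R}^3))$ and $(\partial_t v_n^\epsilon)_n=(\rho_\epsilon*\partial_t u_n)_n$ is bounded in $L^2((0,T),L^2(\mathbb{R}^3))$, both uniformly in $n$. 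Since everything is supported in $\overline{B(0,R+1)}$, Rellich's theorem gives the compact embedding $H^1_0(B(0,R+1))\hookrightarrow L^2(B(0,R+1))$, and the classical Aubin\ddh Lions lemma yields a subsequence of $(v_n^\epsilon)_n$ convergent in $L^2((0,T),L^2)$.

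Taking $\epsilon_j=2^{-j}$ and iterating the previous step, a Cantor diagonal extraction produces a single subsequence $(u_{n_k})_k$ such that for every $j\geq 1$ the regularized sequence $(v_{n_k}^{\epsilon_j})_k$ converges in $L^2((0,T),L^2)$. The triangle inequality
\[
\|u_{n_k}-u_{n_\ell}\|_{L^2((0,T),L^2)}\leq \|u_{n_k}-v_{n_k}^{\epsilon_j}\|+\|v_{n_k}^{\epsilon_j}-v_{n_\ell}^{\epsilon_j}\|+\|v_{n_\ell}^{\epsilon_j}-u_{n_\ell}\|,
\]
combined with the uniform Step~1 estimate (choose $j$ large first, so the two outer terms are small uniformly in $k,\ell$, then $k,\ell$ large, so the middle term is small), shows that $(u_{n_k})$ is Cauchy in $L^2((0,T),L^2)$, hence convergent to some $u_\infty$.

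The hard step is the Aubin\ddh Lions compactness for the regularized family; the compact-support hypothesis $u_n(t,x)=0$ for $|x|>R$ is essential and cannot be dropped (without it, translations at infinity of a fixed profile already obstruct $L^2$ convergence). The remaining ingredients---the uniform approximation estimate, the diagonal extraction, and the Cauchy argument---are entirely routine.
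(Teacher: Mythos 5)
Your proof is correct, but it takes a genuinely different route from the paper's. The paper extends $u_n$ to $\mathbb{R}\times\mathbb{R}^3$ by a time cutoff and reflection, takes the space--time Fourier transform, and interpolates the two bounds $\|(1+\vert\xi\vert^2)^{s/2}\check{v}_n\|_2\leq C$ and $\|\tau(1+\vert\xi\vert^2)^{\sigma/2}\check{v}_n\|_2\leq C$ to conclude that the extended sequence is bounded in $H^{s_0}(\mathbb{R}\times\mathbb{R}^3)$ with $s_0=\frac{s}{1-\sigma+s}>0$; since all supports lie in a fixed compact subset of space--time, the classical Rellich theorem finishes the argument. You instead mollify in space, prove the uniform approximation estimate $\|u_n-\rho_\epsilon *u_n\|_{L^2L^2}\leq C\epsilon^{\min(s,1)}$, apply the integer-order Aubin--Lions lemma to each regularized family (bounded in $L^2((0,T),H^1)$ with time derivative bounded in $L^2((0,T),L^2)$ and fixed compact support), and conclude by diagonal extraction and a Cauchy argument; all three steps check out, including the use of the rapid decay of $\hat\rho$ to absorb the negative index $\sigma$ at fixed $\epsilon$. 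The trade-off is that your proof takes the classical Aubin--Lions theorem as a black box (somewhat ironically, since the lemma being proved is itself billed as a variant of it), whereas the paper's Fourier-interpolation proof is self-contained modulo Rellich's theorem and quantifies the gained space--time regularity $s_0$; conversely, your mollification layer isolates cleanly where the compact support and the positivity of $s$ enter, and avoids the time extension and reflection entirely.
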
 

\begin{proof} Extend $u_n$ to $\mathbb{R}\times \mathbb{R}^3$ in the following way: let $\omega\in \mathcal{D}(\mathbb{R})$ with $\omega(t)=1$ for $\vert t-\frac T 2\vert \leq   \frac{3T}4$ and $\omega(t)=0$ for $\vert t-\frac T 2\vert \geq   \frac{7T}8$. Define $v_n$ 
on $\mathbb{R}\times \mathbb{R}^3$ by $v_n(t,x)=u_n(t,x)$ if $0\leq t\leq T$, $=\omega(t) u_n(-t,x)$ if $-T\leq t\leq 0$, $=u_n(2T-t) $ if $T\leq t\leq 2T$ and $=0$ if $t\notin [-T,2T]$.  Then $v_n$ is bounded in $L^2(\mathbb{R}, H^s)$ and $\partial_t v_n$ is bounded in $L^2(\mathbb{R}, H^{\sigma})$. Taking the Fourier transfom on $\mathbb{R}\times \mathbb{R}^3$ defined by
$$ \check{F}(\tau,\xi)=\iint F(t,x) e^{-i(t\tau+x\cdot\xi)}\, dt\, dx,$$ we find that
$ (1+\vert \xi\vert^2)^{s/2} \check{v}_n$ is bounded in  $L^2( \mathbb{R}\times \mathbb{R}^3)$
and
$\tau  (1+\vert \xi\vert^2)^{\sigma/2} \check{v}_n$ is bounded in  $L^2( \mathbb{R}\times \mathbb{R}^3)$. As $\sigma<s$, 
$(1+\vert \tau\vert^2)^{1/2} (1+\vert \xi\vert^2)^{\sigma/2} \check{v}_n$ is bounded in  $L^2( \mathbb{R}\times \mathbb{R}^3)$.  Let  $  s_0=\frac s{1-\sigma+ s}$; we have $0<s_0<1$ and $s_0=(1-s_0)s+s_0\sigma$, so that
       \begin{equation*}\begin{split} (1+\tau^2+\vert \xi\vert^2)^{s_0/2}\leq& (1+\tau^2)^{s_0/2}(1+\vert \xi\vert^2)^{s_0/2}\\=& \left((1+\tau^2)^{1/2}(1+\vert \xi\vert^2)^{\sigma/2}\right)^{s_0}  \left(1+\vert \xi\vert^2)^{s/2}\right)^{1-s_0}
 \end{split}\end{equation*} and thus 
$(1+\tau^2+\vert \xi\vert^2)^{s_0/2}\check{v}_n$ is bounded in  $L^2( \mathbb{R}\times \mathbb{R}^3)$, or equivalently $v_n$ is bounded in $H^{s_0}( \mathbb{R}\times \mathbb{R}^3)$. As $s_0>0$ and as all the $v_n$ are supported in the compact set $[-T/2,2T]\times \overline{B(0,R)}$, we may apply Rellich's theorem and get that there exists a subsequence $(v_{n_k})_{k\in\mathbb{N}}$ that is strongly convergent in $L^2( \mathbb{R}\times \mathbb{R}^3)$. The subsequence $(u_{n_k})_{k\in\mathbb{N}}$   is then strongly convergent in $L^2((0,T), L^2( \mathbb{R}^3))$. 
   \end{proof}
   
   Theorem 1 will then be proved in the following way:
   
   \begin{proposition}\label{prop:final} There exists a sequence $(\epsilon_n,\alpha_n)_{n\in\mathbb{N}}$ and a vector field $\vu$ such that:
   \begin{itemize}
   \item $\lim_{n\rightarrow +\infty} \epsilon_n=\lim_{n\rightarrow +\infty} \alpha_n=0$,
   \item $\vu_{\epsilon_n, \alpha_n}$ is strongly convergent to $\vu$ in $L^2((0,T), \Phi_4\, dx)$ fo every $T>0$.
   \end{itemize}
   Moreover, $\vu$ is  a solution on   $(0,+\infty)\times\mathbb{R}^3$  of the Navier\ddh Stokes equations
  \begin{equation}\label{NSEq12} \left\{ \begin{split} \partial_t\vu=&\Delta\vu -\mathbb{P}\Div(\vu\otimes\vu))
\\\Div\,\vu=&0  \\ \lim_{t\rightarrow 0} \vu(t,.)=&\vu_0\end{split}\right.\end{equation} and, for every $0<T<+\infty$, we have
$\vu\in L^\infty((0,T), L^2(\frac{dx}{1+\vert x\vert^2}))+ L^\infty((0,T), L^r)$.
   \end{proposition}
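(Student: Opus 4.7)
The plan is to extract a convergent subsequence using the uniform bounds of Theorem \ref{theo:mollif4}, together with the Rellich--Lions lemma, and then pass to the limit in the mollified equation. I proceed in four stages.

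\textbf{Stage 1: local strong convergence via Rellich--Lions.} Fix $T,R\in\mathbb{N}$ and a cutoff $\chi_R\in\mathcal{D}(\mathbb{R}^3)$ supported in $B(0,R+1)$ with $\chi_R=1$ on $B(0,R)$. By Theorem \ref{theo:mollif4}, for $\alpha<\alpha_{T,\vu_0}$ the family $\chi_R \vu_{\epsilon,\alpha}$ is bounded in $L^2((0,T),H^{1/2})$ (since $\Phi_2$ is bounded below on $B(0,R+1)$), and $\chi_R \partial_t \vu_{\epsilon,\alpha}$ is bounded in $L^2((0,T),H^{-4})$. Lemma \ref{le:rellich} yields a subsequence strongly convergent in $L^2((0,T),L^2(B(0,R)))$. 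By a standard diagonal extraction over $T,R\in\mathbb{N}$, I obtain a single sequence $(\epsilon_n,\alpha_n)\to(0,0)$ along which $\vu_{\epsilon_n,\alpha_n}\to\vu$ strongly in $L^2((0,T),L^2(B(0,R)))$ for every $T,R$.

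\textbf{Stage 2: upgrade to strong convergence in $L^2_t L^2(\Phi_4\,dx)$.} Since $\Phi_4(x)=\Phi(x)^{1/2}\Phi(x)^{7/2}\leq \Phi(R)^{1/2}\Phi_{7/2}(x)$ for $|x|\geq R$, the uniform bound in $L^\infty((0,T),L^2(\Phi_{7/2}\,dx))$ gives a tail estimate
\[ \int_0^T\!\int_{|x|>R}|\vu_{\epsilon_n,\alpha_n}|^2\,\Phi_4\,dx\,dt \leq C\,T\,\Phi(R)^{1/2}\xrightarrow[R\to\infty]{}0, \]
uniformly in $n$. Combined with Stage 1, this yields strong convergence of $\vu_{\epsilon_n,\alpha_n}$ to $\vu$ in $L^2((0,T),L^2(\Phi_4\,dx))$ for every $T>0$.

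\textbf{Stage 3: passing to the limit.} The linear parts $\partial_t\vu_{\epsilon_n,\alpha_n}$ and $\Delta\vu_{\epsilon_n,\alpha_n}$ converge in $\mathcal{D}'$ by the strong (hence distributional) convergence. For the nonlinearity, test against $\varphi\in\mathcal{D}((0,T)\times\mathbb{R}^3)$ with $\mathrm{supp}\,\varphi\subset(0,T)\times B(0,R_0)$: for $n$ large, $\theta_{\alpha_n}\equiv 1$ on $B(0,R_0+1)$ and $\varphi_{\epsilon_n}$ is supported in $B(0,\epsilon_n)$, so on a neighbourhood of $\mathrm{supp}\,\varphi$ we have $\varphi_{\epsilon_n}*(\theta_{\alpha_n}\vu_{\epsilon_n,\alpha_n})=\varphi_{\epsilon_n}*\vu_{\epsilon_n,\alpha_n}$. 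Using strong convergence of $\vu_{\epsilon_n,\alpha_n}$ to $\vu$ in $L^2$ on compacts, the elementary mollifier argument gives that $\varphi_{\epsilon_n}*\vu_{\epsilon_n,\alpha_n}\to\vu$ strongly in $L^2_{\mathrm{loc}}$, and the product with $\vu_{\epsilon_n,\alpha_n}$ (which also converges strongly in $L^2_{\mathrm{loc}}$) converges to $\vu\otimes\vu$ in $L^1_{\mathrm{loc}}$. Hence $\mathbb{P}\Div((\varphi_{\epsilon_n}*(\theta_{\alpha_n}\vu_{\epsilon_n,\alpha_n}))\otimes\vu_{\epsilon_n,\alpha_n})\to\mathbb{P}\Div(\vu\otimes\vu)$ in $\mathcal{D}'$, so $\vu$ solves (\ref{NSEq12}) in the distributional sense. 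The divergence-free condition is preserved by the convergence.

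\textbf{Stage 4: splitting and initial value.} By Theorem \ref{theo:mollif4}, each $\vb_{\epsilon_n,\alpha_n,T}$ is bounded in $L^\infty((0,T),L^r)$ and each $\vv_{\epsilon_n,\alpha_n,T}$ is bounded in $L^\infty((0,T),L^2(\Phi_2\,dx))$. Extracting further subsequences, each component converges weak-$*$ to limits $\vb_T,\vv_T$ in the respective spaces, and these two weak-$*$ limits sum to $\vu$ by uniqueness of the distributional limit; this proves the splitting claim. Finally, $\vu\otimes\vu\in L^1((0,T),L^1(\Phi_4\,dx))$ (since $L^r+L^2(\Phi_2\,dx)$ multiplies into $L^1(\Phi_4\,dx)$), so Theorem \ref{theo:stokes} applies and gives $\vu\in\mathcal{C}([0,T],H^{-4}(\Phi_8\,dx))$; identifying $\vu(0,\cdot)$ with $\vu_0$ follows from the equicontinuity of the approximants in that space (their $\partial_t$ is uniformly bounded in $L^2((0,T),H^{-4}(\Phi_8\,dx))$) and the common initial datum. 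The main delicate point in this plan is Stage 3: one must be careful that the double parameter $(\epsilon_n,\alpha_n)$ simultaneously tames the mollifier and removes the truncation on any fixed compact set, which is why the localisation to $\mathrm{supp}\,\varphi$ is essential before invoking the strong $L^2_{\mathrm{loc}}$ convergence.
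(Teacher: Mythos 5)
Your Stages 1, 2 and 4 follow the paper's own architecture: diagonal Rellich--Lions extraction over $(T,R)$, tail estimates from the uniform weighted bounds of Theorem \ref{theo:mollif4} to upgrade local strong convergence to strong convergence in $L^2((0,T),L^2(\Phi_4\,dx))$, and the splitting of the limit via the two bounded components. The genuine gap is in Stage 3, at the sentence ``Hence $\mathbb{P}\Div(\dots)\to\mathbb{P}\Div(\vu\otimes\vu)$ in $\mathcal{D}'$.'' The Leray projection is nonlocal: testing $\mathbb{P}\Div(\mathbb{F}_n)$ against a compactly supported test function involves $\mathbb{F}_n$ on all of $\mathbb{R}^3$, so convergence of the tensors $\mathbb{F}_n$ in $L^1_{\mathrm{loc}}$ (which is all your localisation to $\mathrm{supp}\,\varphi$ delivers) does not imply convergence of $\mathbb{P}\Div\,\mathbb{F}_n$ in $\mathcal{D}'$. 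Concretely, the kernel of $\mathbb{P}\Div$ decays only like $|x-y|^{-4}$, so a sequence of bumps escaping to infinity with mass growing faster than $\mathrm{dist}^4$ converges to $0$ in $L^1_{\mathrm{loc}}$ while $\mathbb{P}\Div$ of it diverges near the origin. Your careful handling of $\theta_{\alpha_n}$ and $\varphi_{\epsilon_n}$ near $\mathrm{supp}\,\varphi$ addresses a real but secondary difficulty; the main one is the global control of the tensor.

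The repair is the route the paper takes, and you already have half of it in Stage 2: show that $\mathbb{F}_n=(\varphi_{\epsilon_n}*(\theta_{\alpha_n}\vu_{\epsilon_n,\alpha_n}))\otimes\vu_{\epsilon_n,\alpha_n}$ converges to $\vu\otimes\vu$ in the \emph{global} space $L^1((0,T),L^1(\Phi_4\,dx))$, then invoke Proposition \ref{prop:proj} (as in the proof of Theorem \ref{theo:stokes}), which gives continuity of $\mathbb{F}\mapsto\mathbb{P}\Div\,\mathbb{F}$ from $L^1(\Phi_4\,dx)$ into $H^{-4}(\Phi_8\,dx)$; this is exactly where the weight $\Phi_4$ beats the $|x-y|^{-4}$ decay of the kernel. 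Besides Stage 2 you must therefore also prove that the mollified factor $\varphi_{\epsilon_n}*(\theta_{\alpha_n}\vu_{\epsilon_n,\alpha_n})$ converges to $\vu$ in $L^2((0,T),L^2(\Phi_4\,dx))$. The paper does this componentwise ($\vb$ measured in $L^r$, $\vv$ in $L^2(\Phi_2\,dx)$ where $\Phi_2\in\mathcal{A}_2$), checking both that $\varphi_{\epsilon_n}*(\theta_{\alpha_n}\cdot)$ is uniformly bounded on these spaces and that $f-\varphi_{\epsilon_n}*(\theta_{\alpha_n}f)\to 0$ for the fixed limits $\vb_{T},\vv_{T}$. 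Once the tensor converges in $L^1((0,T),L^1(\Phi_4\,dx))$, the whole right-hand side converges in $L^1((0,T),H^{-4}(\Phi_8\,dx))$, and the identification of the limit equation and of the initial value proceeds as you indicate in Stage 4.
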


\begin{proof} Let $\mathbb{N}^2$ be enumerated as $ \{ (j_n,k_n)\ /\ n\geq 1\ \}$. We start with a sequence $(\epsilon_{0,n},\alpha_{0,n})_{n\in\mathbb{N}}$  which converges to $(0,0)$. We choose  sequences $(\epsilon_{q,n},\alpha_{q,n})_{n\in\mathbb{N}}$ by induction on $q$, such that the sequence $(\epsilon_{q+1,n},\alpha_{q+1,n})_{n\in\mathbb{N}}$ will be a subsequence of the sequence $(\epsilon_{q,n},\alpha_{q,n})_{n\in\mathbb{N}}$.

Assume that we have chosen the sequence $(\epsilon_{q,n},\alpha_{q,n})_{n\in\mathbb{N}}$ for some $q\geq 0$. We take
 $\omega_{q+1}\in \mathcal{D}(\mathbb{R}^3)$ such that $ \omega_{q+1}(x)=1$ for $\vert x\vert\leq R_{q+1}=2^{k_{q+1}}$ and $=0$ for $\vert x\vert>2 R_{q+1}$. Let $T_{q+1}=2^{j_{q+1}}$.  For $\alpha_{q,n}< \alpha_{T_{q+1},\vu_0}$, $\omega_{q+1}\vu_{\epsilon_{q,n},\alpha_{q,n}}$ is bounded in $L^2((0,T_{q+1}), H^{1/2})$ and  supported in $\{x\in\mathbb{R}^3\ /\ \vert x\vert\leq 2R_{q+1}\},$ while $\partial_t(\omega_{q+1}\vu_{\epsilon_{q,n},\alpha_{q,n}})$ is bounded in $L^2((0,T_{q+1}), H^{-4})$. We may apply Lemma \ref{le:rellich} and choose a subsequence  $(\epsilon_{q+1,n},\alpha_{q+1,n})_{n\in\mathbb{N}}$ such that $\omega_{q+1}\vu_{\epsilon_{q+1,n},\alpha_{q+1,n}}$ is strongly convergent in $L^2((0,T_{q+1}), L^2)$ and thus $\vu_{\epsilon_{q+1,n},\alpha_{q+1,n}}$ is strongly convergent in $L^2((0,T_{q+1})\times B_{R_{q+1}})$.
 
 We then use Cantor's diagonal argument and define $(\epsilon_n,\alpha_n)=(\epsilon_{n,n},\alpha_{n,n})$. We have
 
   \begin{itemize}
   \item $\lim_{n\rightarrow +\infty} \epsilon_n=\lim_{n\rightarrow +\infty} \alpha_n=0$,
   \item $\vu_{\epsilon_n, \alpha_n}$ is strongly convergent   in $L^2((0,T), L^2(B_R))$ for every $T>0$ and every $R>0$.
   \end{itemize}
   With similar arguments, we can grant that, for any $T_k$, we have that $\vu_{\epsilon_n,\alpha_n}=\vv_{\epsilon_n,\alpha_n,T_k} +\vb_{\epsilon_n,\alpha_n,T_k} $ on $(0,T_k)\times\mathbb{R}^3$ with the strong convergence of $\vv_{\epsilon_n,\alpha_n,T_k}$ to  $\vv_{T_k}$ and of  $\vb_{\epsilon_n,\alpha_n,T_k}$ to  $\vb_{T_k}$   in $L^2((0,T_k), L^2(B_R))$ for  every $R>0$.

   Let $\vu$ be the limit of  $\vu_{\epsilon_n, \alpha_n}$. We have, for $R>0$ and $\chi_R$ the characteristic function of the ball $B_R$ (and for $\alpha_n<\alpha_{T_k,\vu_0}$),
      \begin{equation*}\begin{split}[\bullet] \|\vb_{T_k}&-\vb_{\epsilon_n, \alpha_n,T_k}\|_{L^2((0,T_k), L^2(\Phi_4\, dx))} \\\leq& \frac 1 {  R^{\frac 1 4}} \|\vb_{T_k}-\vb_{\epsilon_n, \alpha_n,T_k}\|_{L^2((0,T_k),  L^2(\Phi_{7/2}\, dx))}+\|\chi_R(\vb_{T_k}-\vb_{\epsilon_n, \alpha_n,T_k})\|_{L^2((0,T_k), L^2)} \\\leq& \frac C {  R^{\frac 1 4}} \|\vb_{T_k}-\vb_{\epsilon_n, \alpha_n,T_k}\|_{L^2((0,T_k), L^r))}+\|\vb_{T_k}-\vb_{\epsilon_n, \alpha_n,T_k}\|_{L^2((0,T_k)\times B_R)}. \end{split}\end{equation*} As $ \|\vb_{T_k}-\vb_{\epsilon_n, \alpha_n,T_k}\|_{L^\infty((0,T_k), L^r))} \leq C_{T_k,\vu_0}$, we find that
      $$\limsup_{n\rightarrow +\infty} \|\vb_{T_k}-\vb_{\epsilon_n, \alpha_n,T_k}\|_{L^2((0,T_k), L^2(\Phi_4\, dx))} \leq C C_{T_k,\vu_0} \sqrt{T_k}  \frac 1 {  R^{\frac 1 4}}. $$ Letting $R$ go to $+\infty$, we find 
      $\lim_{n\rightarrow +\infty} \|\vb_{T_k}-\vb_{\epsilon_n, \alpha_n,T_k}\|_{L^2((0,T_k), L^2(\Phi_4\, dx))} =0$.
      \\ $ [\bullet] $ Similarly, we have   \begin{equation*}\begin{split} \|\vv_{T_k}&-\vv_{\epsilon_n, \alpha_n,T_k}\|_{L^2((0,T_k), L^2(\Phi_4\, dx))} \\\leq& \frac 1 {  R} \|\vv_{T_k}-\vv_{\epsilon_n, \alpha_n,T_k}\|_{L^2((0,T_k), L^2(\Phi_{2}\, dx))}+\|\chi_R(\vv_{T_k}-\vv_{\epsilon_n, \alpha_n,T_k})\|_{L^2((0,T_k), L^2)}. \end{split}\end{equation*} As $ \|\vv_{T_k}-\vv_{\epsilon_n, \alpha_n,T_k}\|_{L^\infty((0,T_k), L^2(\Phi_2\, dx))} \leq C_{T_k,\vu_0}$,   we find 
      $$\lim_{n\rightarrow +\infty} \|\vv_{T_k}-\vv_{\epsilon_n, \alpha_n,T_k}\|_{L^2((0,T_k),L^2(\Phi_4\, dx))} =0.$$
    \\ $ [\bullet] $ We remark that we have the inequalities  \\ $\| \varphi_{\epsilon_n} *(\theta_{\alpha_n} f)\|_r\leq \|f\|_r$, \\ $\| \varphi_{\epsilon_n} *(\theta_{\alpha_n} f)\|_{L^2(\Phi_2\, dx)}\leq \|\mathcal{M}_f\|_{L^2(\Phi_2\, dx)}\leq C \|f\|_{L^2(\Phi_2\, dx)}$ (as $\Phi_2$ is a Muckenhoupt weight), \\ $\| \varphi_{\epsilon_n} *(\theta_{\alpha_n} f)\|_{L^2(B_R)} =\| \varphi_{\epsilon_n} *(\theta_{\alpha_n} \chi_{R+1}f)\|_{L^2(B_R)}  \leq \|f\|_{L^2(B_{R+1})}$ (for $\epsilon_n<1$),\\ so that we have 
      $$\lim_{n\rightarrow +\infty} \| \varphi_{\epsilon_n} *(\theta_{\alpha_n}(\vb_{T_k}-\vb_{\epsilon_n, \alpha_n,T_k}))\|_{L^2((0,T_k), L^2(\Phi_4\, dx))} =0$$ and 
      $$\lim_{n\rightarrow +\infty} \| \varphi_{\epsilon_n} *(\theta_{\alpha_n}(\vv_{T_k}-\vv_{\epsilon_n, \alpha_n,T_k}))\|_{L^2((0,T_k), L^2(\Phi_4\, dx))} =0.$$    \\ $ [\bullet] $ Similarly, we have the inequalities   \\ $\|f- \varphi_{\epsilon_n} *(\theta_{\alpha_n} f)\|_r=\|\varphi_{\epsilon_n} *((1-\theta_{\alpha_n}) f)\|_r\leq \|f\|_r$, \\ $\| f-\varphi_{\epsilon_n} *(\theta_{\alpha_n} f)\|_{L^2(\Phi_2\, dx)}\leq \|\mathcal{M}_{(1-\theta_{\alpha_n})f}\|_{L^2(\Phi_2\, dx)}\leq C \|f\|_{L^2(\Phi_2\, dx)}$, \\ $\|f- \varphi_{\epsilon_n} *(\theta_{\alpha_n} f)\|_{L^2(B_R)} =\| \varphi_{\epsilon_n} *((1-\theta_{\alpha_n}) \chi_{R+1}f)\|_{L^2(B_R)}  \leq \|(1-\theta_{\alpha_n}) \chi_{R+1}f\|_{2}$ (for $\epsilon_n<1$).\\  As  we have $(1-\theta_{\alpha_n}) \chi_{R+1}=0$ for $\alpha_n<\frac 1{R+1}$, we find that
      $$\lim_{n\rightarrow +\infty} \| \vb_{T_k}-\varphi_{\epsilon_n} *(\theta_{\alpha_n}\vb_{T_k} )\|_{L^2((0,T_k), L^2(\Phi_4\, dx))} =0$$ and 
      $$\lim_{n\rightarrow +\infty} \| \vv_{T_k}-\varphi_{\epsilon_n} *(\theta_{\alpha_n}\vv_{T_k} )\|_{L^2((0,T_k), L^2(\Phi_4\, dx))} =0.$$ 
      
      Combining all those estimates, we find that for every $T>0$
      $$\lim_{n\rightarrow +\infty} \|\vu\otimes \vu-(\varphi_{\epsilon_n} *(\theta_{\alpha_n}\vu_{\epsilon_n,\alpha_n}))\otimes  \vu_{\epsilon_n,\alpha_n})\|_{L^1((0,T), L^2(\Phi_4\, dx))} =0.$$ From the proof of Theorem \ref{theo:stokes}, we see that $$\lim_{n\rightarrow +\infty}\Delta \vu_{\epsilon_n,\alpha_n} - \mathbb{P}\Div ((\varphi_\epsilon*(\theta_\alpha \vu_{\epsilon_n,\alpha_n}))\otimes \vu_{\epsilon_n,\alpha_n})=\Delta \vu-\mathbb{P}\Div(\vu\otimes\vu)$$  in $L^1((0,T), H^{-4}(\Phi_8\, dx))$. $\vu$ is a solution of the Navier\ddh Stokes equations (\ref{NSEq12}). \end{proof}

\end{document}